\setlist{nosep}
\numberwithin{equation}{section}
\newtheorem{theorem}{Theorem}[section]
\newtheorem{lemma}[theorem]{Lemma}
\newtheorem{proposition}[theorem]{Proposition}
\newtheorem{corollary}[theorem]{Corollary}
\newtheorem{obs}[theorem]{Observation}
\newtheoremstyle{claimstyle}{5pt}{5pt}{\em}{}{\bf}{.}{5pt}{}
\theoremstyle{claimstyle}
\newtheorem{claim}{Claim}
\newenvironment{claimproof}{\noindent \textit{Proof of claim:}}{\hfill\ensuremath{\blacklozenge}\vspace{5pt}}
\newtheoremstyle{stepstyle}{5pt}{5pt}{\em}{}{\bf}{.}{5pt}{}
\theoremstyle{stepstyle}
\newtheorem{pf-step}{Step}
\theoremstyle{definition}
\newtheorem{question}[theorem]{Question}
\theoremstyle{remark}
\newcommand{\cH}{\ensuremath{\mathcal H}}
\newcommand{\cW}{\ensuremath{\mathcal W}}
\newcommand{\ordering}{\ensuremath{\preceq}}
\newcommand{\fwdnbr}{\ensuremath{N^\ordering}}
\newcommand{\fwddeg}{\ensuremath{d^\ordering}}
\newcommand{\eps}{\ensuremath{\varepsilon}}
\DeclareMathOperator{\rank}{r}
\DeclareMathOperator{\antirank}{ar}
\DeclareMathOperator{\vol}{vol}
\definecolor{darkblue}{rgb}{0,0,0.5}
\definecolor{cerisepink}{rgb}{0.93, 0.23, 0.51}
\definecolor{chocolate(traditional)}{rgb}{0.48, 0.25, 0.0}
\definecolor{chromeyellow}{rgb}{1.0, 0.65, 0.0}
\newcommand{\mylabel}[2]{#2\def\@currentlabel{#2}\label{#1}}
\def\COMMENT#1{}
\def\abhi#1{}
\def\Dongyeap#1{}
\def\tom#1{}
\let\abhi=\abhiOpt 
\let\Dongyeap=\DongyeapOpt 
\let\tom=\TomOpt 
\begin{document}
\title[Solution to a problem of Erd\H{o}s]{Solution to a problem of Erd\H{o}s on the chromatic index of hypergraphs with bounded codegree}
\date{\today}

\author[Kang]{Dong Yeap Kang}
\author[Kelly]{Tom Kelly}
\author[K\"uhn]{Daniela K\"uhn}
\author[Methuku]{Abhishek Methuku}
\author[Osthus]{Deryk Osthus}

\thanks{This project has received partial funding from the European Research
Council (ERC) under the European Union's Horizon 2020 research and innovation programme (grant agreement no. 786198, D.~K\"uhn and D.~Osthus).
The research leading to these results was also partially supported by the EPSRC, grant nos. EP/N019504/1 (D.~Kang, T. Kelly and D.~K\"uhn) and EP/S00100X/1 (A.~Methuku and D.~Osthus).
D.~Kang was supported by Institute for Basic Science (IBS-R029-Y6).
T.~Kelly was partially supported by the National Science Foundation under Grant No. DMS-2247078. A.~Methuku was partially supported by the SNSF grant 200021\_196965.}

\begin{abstract}
In 1977, Erd\H{o}s asked the following question: for any integers $t,n \in \mathbb{N}$, if $G_1 , \dots , G_n$ are complete graphs such that each $G_i$ has at most $n$ vertices and every pair of them shares at most $t$ vertices, what is the largest possible chromatic number of the union $\bigcup_{i=1}^{n} G_i$?  
The equivalent dual formulation of this question asks for the largest chromatic index of an $n$-vertex hypergraph with maximum degree at most $n$ and maximum codegree at most $t$.
For the case $t = 1$, Erd\H os, Faber, and Lov\' asz famously conjectured that the answer is $n$, which was recently proved by the authors for all sufficiently large $n$.
In this paper, we answer this question of Erd\H{o}s for $t \geq 2$ in a strong sense, by proving that every $n$-vertex hypergraph with maximum degree at most $(1-o(1))tn$ and maximum codegree at most $t$ has chromatic index at most $tn$ for any $t,n \in \mathbb{N}$. 
Moreover, equality holds if and only if the hypergraph is a $t$-fold projective plane of order $k$, where $n = k^2 + k + 1$.
Thus, for every $t \in \mathbb N$, this bound is best possible for infinitely many integers $n$. This result also holds for the list chromatic index.  
\end{abstract}
\maketitle 

\section{Introduction}\label{sec:intro}
A central theme in Combinatorics is how the local structure of a graph or hypergraph influences global parameters such as the chromatic number. A fundamental early result in this direction is the probabilistic proof by Erd\H{o}s~\cite{erdos1959} of the existence of graphs of high girth and high chromatic number. In particular, the fact that ``locally'' the graph has chromatic number two does not imply that globally the chromatic number is bounded. On the other hand, a famous conjecture where the local structure does completely determine the chromatic number was formulated by Erd\H{o}s, Faber, and Lov\'asz in 1972: if $G$ is the ``nearly disjoint" union of $n$ complete graphs of order $n$, then $G$ has chromatic number $n$. This was recently proved by the authors~\cite{KKKMO2021} for large $n$. In this paper, we build on some of the ideas of ~\cite{KKKMO2021} to provide an answer (Theorem~\ref{main-thm-erdos}) to a related question of Erd\H{o}s (Question~\ref{ques:erdos}) about colouring a union of complete graphs whose pairwise intersection is bounded.  
Our main result (Theorem~\ref{main-thm}) is a more general result that we prove in the ``dual setting'' of edge-colouring hypergraphs with bounded codegree.

\subsection{Answering a question of Erd\H os}
A \textit{proper colouring} of a graph $G$ is an assignment of colours to its vertices such that adjacent vertices are assigned different colours, and the \textit{chromatic number} of $G$, denoted $\chi(G)$, is the fewest number of colours needed to properly colour $G$. 

Erd\H{o}s~\cite[Problem 9]{erdos1979} asked the following question (see also~\cite[Problem 95]{chung1997} and~\cite{erdos1995} for the $t=2$ case) that considers the chromatic number of graphs which are a union of complete graphs with bounded pairwise intersection.

\begin{question}[Erd\H{o}s~\cite{erdos1979}]\label{ques:erdos}
Let $n,t \in \mathbb{N}$. If $G_1 , \dots , G_n$ are complete graphs, each on at most $n$ vertices, such that $|V(G_i) \cap V(G_j)| \leq t$ for all distinct $i, j \in [n]$, what is the largest possible chromatic number of $\bigcup_{i=1}^{n}G_i$?
\end{question}

In 1972, Erd\H{o}s, Faber, and Lov\'asz famously conjectured that for $t=1$, the answer to Question~\ref{ques:erdos} is $n$, and this was recently proved for all sufficiently large $n$ by the authors~\cite{KKKMO2021}. 
Building on the ideas of~\cite{KKKMO2021}, we prove the following theorem (Theorem~\ref{main-thm-erdos}) that answers the question for all $2 \leq t < \sqrt{n}$\COMMENT{If $n = k^2 + k + 1$, then $k^2 < n < (k + 1)^2$, so $\lfloor \sqrt n \rfloor = k$.  Theorem~\ref{main-thm-erdos} applies in ths case for $t \leq k$.} and sufficiently large $n$. The range when $t$ is larger is already covered asymptotically by an observation of  Hor\'{a}k and Tuza~\cite{ht1990}, see Section~\ref{sec:results}. We emphasize that since Theorem~\ref{main-thm-erdos} requires that $t \ge 2$, it does not imply our results in \cite{KKKMO2021}.  

\begin{theorem}\label{main-thm-erdos}
    There exists $n_0 \in \mathbb{N}$ such that the following holds for all $n,t \in \mathbb{N}$ where $n \geq n_0$ and $t\geq 2$.
    If $G_1, \dots, G_n$ are complete graphs, each on at most $n$ vertices, such that $|V(G_i) \cap V(G_j)| \leq t$ for all distinct $i, j \in [n]$, then $\chi(\bigcup_{i=1}^n G_i) \leq tn$.  
    Moreover, for infinitely many $k \in \mathbb N$, if $n = k^2 + k + 1$ and $t \leq k$,
    then there exist such $G_1, \dots, G_n$ such that $\bigcup_{i=1}^n G_i$ has $tn$ vertices and is complete (and in particular has chromatic number $tn$).
\end{theorem}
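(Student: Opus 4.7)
The plan is to deduce Theorem~\ref{main-thm-erdos} from the hypergraph main result, Theorem~\ref{main-thm}, via the standard duality between vertex-colouring a union of cliques and edge-colouring a hypergraph. Build the multi-hypergraph $H$ on vertex set $[n]$ whose edges are indexed by $V(\bigcup_{i=1}^{n} G_i)$: for each $v \in V(\bigcup_i G_i)$, place an edge $e_v := \{i \in [n] : v \in V(G_i)\}$. A direct check gives $d_H(i) = |V(G_i)| \le n$, so $\Delta(H) \le n$; for distinct $i, j \in [n]$, the codegree $d_H(\{i,j\}) = |V(G_i) \cap V(G_j)| \le t$; and two vertices $u \neq v$ of $\bigcup_i G_i$ are adjacent iff $e_u \cap e_v \neq \emptyset$. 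Hence proper edge-colourings of $H$ correspond bijectively to proper vertex-colourings of $\bigcup_i G_i$, so $\chi(\bigcup_i G_i) = \chi'(H)$.

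For the upper bound, since $t \ge 2$ and $n$ is large, we have $n \le (1 - o(1))tn$, so $H$ satisfies the hypotheses of Theorem~\ref{main-thm}. That theorem then yields $\chi'(H) \le tn$, which is the desired bound on $\chi(\bigcup_i G_i)$.

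For the sharpness, let $k$ be a prime power (so that a projective plane of order $k$ exists) and set $n := k^2 + k + 1$. Let $H$ be the multi-hypergraph on the $n$ points of the plane whose edges are the $n$ lines, each taken with multiplicity $t$. Then $|E(H)| = tn$, every vertex of $H$ has degree $t(k+1) \le k(k+1) < n$ (using $t \le k$), and every pair of distinct vertices has codegree exactly $t$ (as any two points lie on a unique line, contributing $t$ copies). Any two edges of $H$ share at least one vertex---two distinct lines of the plane meet in a point, and two copies of the same line share all of it---so no two edges of $H$ can receive the same colour in any proper edge-colouring, forcing $\chi'(H) = tn$. Unwinding the duality by setting $V(G_i) := \{e \in E(H) : i \in e\}$ and making each $G_i$ a clique on this set produces complete graphs $G_1, \dots, G_n$ with $|V(G_i)| = t(k+1) \le n$ and $|V(G_i) \cap V(G_j)| = t$ for $i \neq j$; and since every pair of edges of $H$ intersects, $\bigcup_i G_i$ is the complete graph on its $tn$ vertices, which has chromatic number $tn$.

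The real obstacle here is buried in Theorem~\ref{main-thm} itself; the deduction above is essentially just a dualisation plus a short projective-plane construction. The only minor technicalities to verify are that Theorem~\ref{main-thm} applies to multi-hypergraphs (vertices of $\bigcup_i G_i$ with identical vertex-neighbourhoods produce repeated edges in $H$) and that the degree slack $n \le (1-o(1))tn$ genuinely holds once $t \ge 2$ and $n$ is sufficiently large, both of which are immediate.
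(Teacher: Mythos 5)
Your proposal is correct and follows essentially the same route as the paper: it dualises the union of cliques to an $n$-vertex hypergraph of codegree at most $t$ and maximum degree at most $n\le\frac{1}{2}tn$, applies Theorem~\ref{main-thm} (the paper does this via Corollary~\ref{cor:main-thm-dual} with $\eps=1/2$), and obtains the extremal examples from $t$-fold projective planes exactly as in the paper's discussion of the ``moreover'' part.
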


\subsection{The dual setting}

We prove Theorem~\ref{main-thm-erdos} in a ``dual setting''.  To that end, we recall the following notions.

 A \emph{hypergraph} $\cH$ is a finite set equipped with a ground set $V(\cH)$ whose elements are called \textit{vertices} of $\cH$, and every element $e \in \cH$ is called an \emph{edge} of $\cH$ and is equipped with a non-empty set $V(e) \subseteq V(\cH)$. 
 For convenience, we will often consider $\cH$ as a multiset $\{ V(e) : e \in \cH \}$ of subsets of $V(\cH)$ by identifying $e$ with $V(e)$ for each $e \in \cH$, if it is clear from the context.
 We will think of any edge $e \in \cH$ as \emph{consisting} of all the vertices in $V(e)$.
We emphasize that we allow $V(e)=V(f)$ for edges $e \neq f$ in $\cH$, i.e.~we allow ``multiple edges" in our hypergraph. A \textit{proper edge-colouring} of a hypergraph $\cH$
is an assignment of colours to its edges such that any two of its edges $e \neq f$ assigned the same colour satisfy $V(e) \cap V(f)= \varnothing$, and the \textit{chromatic index} of $\cH$, denoted $\chi'(\cH)$, is the fewest number of colours needed to properly edge-colour $\cH$. The \textit{dual} of a hypergraph $\cH$ is the hypergraph $\cH^*$ with vertex set $\cH$ and edge set $V(\cH)$ such that each edge $u \in \cH^* = V(\cH)$ is equipped with $V(u) \coloneqq \{ e \in \cH : u \in V(e) \}$.
(Clearly, the dual of $\cH^*$ is isomorphic to $\cH$ itself.)
The \textit{line graph} of $\cH$, denoted $L(\cH)$, is the graph $G\coloneqq L(\cH)$ where $V(G)$ is the edge set of $\cH$, and $e, f \in V(G)$ are adjacent in $G$ if $V(e)\cap V(f) \neq \varnothing$.  
Note that the chromatic index of $\cH$ is the chromatic number of its line graph.
The union of complete graphs in Question~\ref{ques:erdos} can be represented as the line graph of a hypergraph, as follows.
  \begin{enumerate}[label=(\theequation)]
    \stepcounter{equation}
  \item\label{fact:duality1} If $G_1, \dots, G_n$ are complete graphs and $\cH$ is the dual of the hypergraph $\{ e_i : i \in [n] \}$ with $V(e_i) \coloneqq V(G_i)$ for each $i \in [n]$, then the line graph of $\cH$ is isomorphic to $\bigcup_{i=1}^n G_i$ and so $\chi'(\cH) = \chi\left(\bigcup_{i=1}^n G_i\right)$.  
  \end{enumerate}

The \textit{degree} of a vertex $v \in V(\cH)$ in $\cH$, denoted $d_\cH(v)$, is the number of edges $e\in\cH$ such that $V(e)$ contains $v$, and the \textit{codegree} of two distinct vertices $u, v \in V(\cH)$ in $\cH$, denoted $d_\cH(u, v)$, is the number of edges $e \in \cH$ such that $V(e)$ contains both $u$ and $v$.  The \textit{maximum degree} of $\cH$, denoted $\Delta(\cH)$, is the maximum degree of a vertex of $\cH$, and the \textit{maximum codegree} of $\cH$, denoted $\Delta_2(\cH)$, is the maximum codegree of two distinct vertices of $\cH$.  
Note the following:
  \begin{enumerate}[label=(\theequation)]
    \stepcounter{equation}
  \item\label{fact:duality2} If $G_1, \dots, G_n$ are complete graphs and $\cH$ is the dual of the hypergraph $\{ e_i : i \in [n] \}$ with $V(e_i) \coloneqq V(G_i)$ for each $i \in [n]$, then $\Delta(\cH) = \max\{|V(G_i)| : i \in [n]\}$ and $\Delta_2(\cH) = \max\{|V(G_i)\cap V(G_j)| : i\neq j \in [n]\}$.
  \end{enumerate}

By~\ref{fact:duality1} and~\ref{fact:duality2}, Question~\ref{ques:erdos} can be answered by determining the largest chromatic index of an $n$-vertex hypergraph with maximum degree at most $n$ and maximum codegree at most $t$. 
In fact, we can see from the following that both questions are equivalent.

\begin{enumerate}[label=(\theequation)]
    \stepcounter{equation}
  \item[\ref*{fact:duality1}']\label{fact:duality3}   
  If $\cH$ is a hypergraph and $G_v$ is the subgraph of $L(\cH)$ induced by $\{e \in \cH : V(e) \ni v\}$ for every $v \in V(\cH)$, then $\bigcup_{v \in V(\cH)} G_v = L(\cH)$ and so $\chi(\bigcup_{v \in V(\cH)}G_v) = \chi'(\cH)$.

  \item[\ref*{fact:duality2}'] If $\cH$ is a hypergraph and $G_v$ is the subgraph of $L(\cH)$ induced by $\{e \in \cH : V(e) \ni v\}$for every $v \in V(\cH)$, then $G_v$ is a complete graph satisfying $|V(G_v)| = d_{\cH}(v) \leq \Delta(\cH)$ for every $v\in V(\cH)$ and $|V(G_v) \cap V(G_v)| = d_{\cH}(u, v) \leq \Delta_2(\cH)$ for every two distinct $u, v \in V(\cH)$.
\end{enumerate}

By considering this dual formulation, it is straightforward to show the ``moreover'' part of Theorem~\ref{main-thm-erdos}, as follows.  
A hypergraph $\cH$ is \textit{linear} if 
it has maximum codegree at most one
and \emph{intersecting} if $V(e) \cap V(f) \ne \varnothing$ for every $e,f \in \cH$.   
For $k \in \mathbb N$, a projective plane of order $k$ is a linear and intersecting hypergraph $\cH$ with $k^2 + k + 1$ vertices such that every edge has \textit{size} $k + 1$ (where the size of an edge $e \in \cH$ is $|V(e)|$) and every vertex lies in $k+1$ edges.
A projective plane of order $k$ exists if $k$ is a power of a prime number, and it is a longstanding conjecture in finite geometry that the converse also holds (see~\cite[Conjecture C.2]{ball2015} and~\cite{BR1949}). 
For any hypergraph $\cH$ and $t \in \mathbb{N}$, a \emph{$t$-fold} $\cH$ is a hypergraph obtained from $\cH$ by replacing each edge with $t$ distinct copies of it.
Now let $t, k \in \mathbb N$ for which there exists a projective plane of order $k$, and let $\cH_{t,k}$ be a $t$-fold projective plane of order $k$.
Let $n \coloneqq k^2 + k + 1$, and let $L_{t,k}$ be the line graph of $\cH_{t, k}$.  Note that $L_{t,k} = \bigcup_{i=1}^n G_i$ where $G_i$ is a complete graph\COMMENT{whose vertices correspond to edges of $\cH_{t,k}$ containing a fixed vertex $v_i \in V(\cH_{t,k})$} for all $i \in [n]$, and $|V(G_i) \cap V(G_j)| = t$ for all distinct $i, j \in [n]$.
If $t \leq k$, then each $G_i$ has $t(k+1) < n$ vertices, and since $\cH_{t, k}$ is intersecting, $L_{t, k}$ is complete on $tn$ vertices, as desired.

\subsection{New Results}
\label{sec:results}
Before mentioning our new results, we first recall list colouring.
For any graph $G$, a \textit{list assignment} for $V(G)$ is a collection of ``lists of colours'' $C(v)$ for every $v \in V(G)$, and a \emph{$C$-colouring} of $G$ is a proper colouring $\phi$ of $G$ such that $\phi(v) \in C(v)$ for every $v \in V(G)$.  
We say $G$ is \emph{$C$-colourable} if $G$ has a $C$-colouring, and the \textit{list chromatic number} of $G$, denoted $\chi_\ell(G)$, is the minimum $t$ such that $G$ is $C$-colourable for any list assignment $C$ for $V(G)$ satisfying $|C(v)| \geq t$ for every $v \in V(G)$.
Similarly, for any hypergraph $\cH$ and any list assignment $C$ for $V(L(\cH)) = \cH$, a \emph{$C$-edge-colouring} of $\cH$ is a proper $C$-colouring of the line graph $L(\cH)$ of $\cH$. We say $\cH$ is \emph{$C$-edge-colourable} if the line graph $L(\cH)$ of $\cH$ has a $C$-colouring, and the \textit{list chromatic index} of $\cH$, denoted $\chi'_\ell(\cH)$, is the list chromatic number $\chi_\ell(L(\cH))$ of $L(\cH)$.

In the dual setting, Theorem~\ref{main-thm-erdos} implies the following for $t \geq 2$ and $n$ sufficiently large: If $\cH$ is an $n$-vertex hypergraph with $\Delta(\cH) \leq n$ and $\Delta_2(\cH) \leq t$, then $\chi'(\cH) \leq tn$.
Our main result strengthens this result in three ways.  Firstly, we allow the maximum degree of $\cH$ to be at most $(1 - \eps)tn$ for any $\eps > 0$.  Secondly, we prove that the result actually holds more generally for list colouring.   
Thirdly, we characterize the hypergraphs for which equality holds in the bound; in particular, $\chi'(\cH) = tn$ holds only if $\cH$ is a $t$-fold projective plane.  The following is our main result.

\begin{restatable}{theorem}{mainthm}\label{main-thm}
For every $\eps>0$, there exists $n_0 \in \mathbb{N}$ such that the following holds for all $n,t \in \mathbb{N}$ where $n \geq n_0$.
If $\cH$ is an $n$-vertex hypergraph with $\Delta(\cH) \leq (1 - \eps)tn$ and  $\Delta_2(\cH) \leq t$, then $\chi_\ell'(\cH) \leq tn$. 
Moreover, $\chi_\ell'(\cH) = tn$ if and only if $\cH$ is a $t$-fold projective plane of order $k \in \mathbb{N}$, where $n = k^2 + k + 1$.
\end{restatable}

We remark that Kahn's proof~\cite{kahn1992coloring} of an asymptotic version of the Erd\H{o}s-Faber-Lov\'asz Conjecture only shows that $\chi'_\ell(\cH) \leq (1 + o(1))tn$ in Theorem~\ref{main-thm}.

By~\ref{fact:duality1} and \ref{fact:duality2}, Theorem~\ref{main-thm} provides the following strengthening of Theorem~\ref{main-thm-erdos}.

\begin{corollary}\label{cor:main-thm-dual}
For every $\eps>0$, there exists $n_0 \in \mathbb{N}$ such that the following holds for all $n,t \in \mathbb{N}$ where $n \geq n_0$.
If $G_1, \dots, G_n$ are complete graphs, each on at most $(1 - \eps)tn$ vertices, such that $|V(G_i) \cap V(G_j)| \leq t$ for all distinct $i, j \in [n]$, then $\chi_\ell(\bigcup_{i=1}^n G_i) \leq tn$.  Moreover, $\chi_\ell(\bigcup_{i=1}^n G_i) = tn$ if and only if the dual of the hypergraph $\{ e_i : i \in [n] \}$ with $V(e_i) \coloneqq V(G_i)$ for each $i \in [n]$ is a $t$-fold projective plane of order $k \in \mathbb{N}$, where $n = k^2 + k + 1$.
\end{corollary}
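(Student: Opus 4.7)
The plan is to derive Corollary~\ref{cor:main-thm-dual} directly from Theorem~\ref{main-thm} via the duality correspondence recorded in~\ref{fact:duality1} and~\ref{fact:duality2}, treating the corollary as a dictionary translation rather than an independent argument.

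Given $G_1,\dots,G_n$ as in the hypothesis, I would first let $\cH$ be the dual of the hypergraph $\{V(G_i):i\in[n]\}$. By construction $\cH$ has exactly $n$ vertices (one per $G_i$), and by~\ref{fact:duality2} its maximum degree equals $\max_i |V(G_i)| \le (1-\eps)tn$ while its codegree equals $\max_{i\ne j}|V(G_i)\cap V(G_j)|\le t$. Thus $\cH$ satisfies the hypotheses of Theorem~\ref{main-thm}, which yields $\chi_\ell'(\cH)\le tn$.

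Next I would transfer this bound to $\bigcup_{i=1}^n G_i$. By~\ref{fact:duality1} the line graph $L(\cH)$ is isomorphic to $\bigcup_{i=1}^n G_i$, and under this isomorphism each edge $e\in\cH$ corresponds to a vertex of $\bigcup_{i=1}^n G_i$. Consequently, every list assignment $C$ for the vertices of $\bigcup_{i=1}^n G_i$ pulls back to a list assignment $C'$ for the edges of $\cH$ with $|C'(e)|=|C(\text{corresponding vertex})|$, and $C$-vertex-colourings of $\bigcup_{i=1}^n G_i$ correspond bijectively to $C'$-edge-colourings of $\cH$. Hence $\chi_\ell(\bigcup_{i=1}^n G_i)=\chi_\ell'(\cH)\le tn$, which is the desired upper bound.

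Finally, to obtain the moreover statement, I would assume $\chi_\ell(\bigcup_{i=1}^n G_i)=tn$, so by the displayed equality of list parameters $\chi_\ell'(\cH)=tn$ as well. The moreover part of Theorem~\ref{main-thm} then supplies $k\in\mathbb{N}$ with $n=k^2+k+1$ such that $\cH$ itself is a $t$-fold projective plane of order $k$, which is exactly the conclusion required of the dual of $\{V(G_i):i\in[n]\}$. There is no genuine obstacle here: the only point needing care is confirming that the duality~\ref{fact:duality1}, stated for $\chi'$ and $\chi$, is equally valid for $\chi_\ell'$ and $\chi_\ell$, and this follows immediately because the correspondence between edges of $\cH$ and vertices of $L(\cH)$ respects lists edge-by-edge.
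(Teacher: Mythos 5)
Your proposal is correct and is essentially the paper's own argument: the corollary is deduced from Theorem~\ref{main-thm} by passing to the dual hypergraph and invoking~\ref{fact:duality1} and~\ref{fact:duality2}, with the observation that the edge/vertex correspondence between $\cH$ and $L(\cH)$ carries lists over, so the list parameters coincide. Your verification of the hypotheses (maximum degree and codegree via~\ref{fact:duality2}) and the transfer of the moreover part are exactly how the paper intends the deduction.
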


Note that the bound on the chromatic number in Theorem~\ref{main-thm-erdos} follows immediately from Corollary~\ref{cor:main-thm-dual} (with $\eps = 1/2$, say).

Note that for every $t \in \mathbb{N}$, there are infinitely many $n$ so that the upper bound in Theorem~\ref{main-thm} is tight. If $t > \sqrt{n}$ this does not imply that Theorem~\ref{main-thm-erdos} is best possible, since the extremal examples for Theorem~\ref{main-thm} in this case have maximum degree greater than $n$ (and thus the corresponding
complete graphs in Theorem~\ref{main-thm-erdos} would have more than $n$ vertices).
However, Hor\'{a}k and Tuza~\cite{ht1990} proved an asymptotically optimal bound in this case, by showing that every $n$-vertex hypergraph with maximum degree at most $n$ has chromatic index at most $n^{3/2}$ (where for $n = k^2+k+1$, a $k$-fold projective plane of order $k$ shows that this bound is asymptotically optimal\COMMENT{If $n = k^2+k+1$, the degree of every vertex of a $k$-fold projective plane of order $k$ is $k(k + 1) = n - 1$, so the size of each clique in the dual construction is at most $n$. Moreover, its chromatic index is $kn = (1+o(1))n^{3/2}$ as it is intersecting and there are $kn$ edges in total.}).

By~\ref{fact:duality1} and~\ref{fact:duality2}, the Erd\H os-Faber-Lov\'asz Conjecture is equivalent to the following: Every $n$-vertex linear hypergraph $\cH$ with maximum degree at most $n$ has chromatic index at most $n$.  
Although Theorem~\ref{main-thm} also applies when $t = 1$ (namely, when $\cH$ is linear), it does not imply the result of~\cite{KKKMO2021} because of its stronger maximum degree assumption.  However, it is still an interesting open problem to characterize the extremal examples for the Erd\H os-Faber-Lov\'asz Conjecture, and Theorem~\ref{main-thm} provides such a characterisation of the extremal examples with maximum degree at most $(1 - \eps)n$ for any $\eps > 0$ (for sufficiently large $n$).
Moreover, it is also open whether the Erd\H os-Faber-Lov\'asz Conjecture holds for list coloring, which Theorem~\ref{main-thm} confirms under this maximum degree assumption.
See~\cite{KKKMO2021_survey} for a more detailed discussion of these open problems.\COMMENT{For this problem, it is sufficient to determine which $n$-vertex linear hypergraphs with no size-one edges have chromatic index $n$, since if $\cH'$ is the hypergraph obtained from a linear hypergraph $\cH$ by removing all edges of size one, then $\chi'(\cH) = \max\{\chi'(\cH'), \Delta(\cH)\}$\COMMENT{since at most $\max\{0, \Delta(\cH) - k\}$ additional colours are needed to extend a proper edge-colouring of $\cH'$ that uses $k$ colours to a proper edge-colouring of $\cH$} and $\Delta(\cH') < |V(\cH)|$.  There are at least two additional families of such extremal examples.
First, every $n$-vertex graph with more than $(n - 1)^2 / 2$ edges has chromatic index at least $n$ when $n$ is odd, so every linear hypergraph obtained by adding edges to such a graph is also extremal.  Second, every hypergraph $\cH$ obtained from an $n$-vertex complete graph by replacing a complete subgraph with a single edge $e$ also has chromatic index $n$ if $n - |V(e)|$ is odd (and is linear); in fact, if $|V(e)| = n - 1$ then $\cH$ is intersecting, and we say $\cH$ is a \textit{near-pencil}.  These two families, together with the projective planes, may include all of the extremal examples.}

We also prove the following stability result which generalizes~\cite[Theorem 1.2]{KKKMO2021} and which is used in the proof of Theorem~\ref{main-thm}.
\begin{restatable}{theorem}{mainthmsecond}\label{main-thm-2}
    For every $\delta > 0$, there exist $n_0, \mu > 0$ such that the following holds for all $n, t \in \mathbb{N}$ where $n \geq n_0$.  
    If $\cH$ is an $n$-vertex hypergraph with 
    $\Delta(\cH) \leq (1 - \delta)tn$ and $\Delta_2(\cH) \leq t$
    such that the number of edges of size $(1 \pm \delta)\sqrt{n}$ in $\cH$ is at most $(1-\delta)tn$, then $\chi_\ell'(\cH) \leq (1-\mu)tn$.
\end{restatable}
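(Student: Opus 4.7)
The plan is to adapt the strategy from \cite[Theorem 1.2]{KKKMO2021}, which proves the corresponding stability statement in the linear case $t=1$, to general codegree $t\geq 1$. The overall intuition is that a $t$-fold projective plane of order $\approx\sqrt{n}$ is essentially the unique extremal configuration achieving $\chi'(\cH)=tn$, and such a configuration is characterised by having approximately $tn$ edges of size approximately $\sqrt{n}$. Thus the hypothesis $|E_m|\leq (1-\delta)tn$ (where $E_m$ denotes the set of \emph{medium} edges, those of size in $[(1-\delta)\sqrt{n},(1+\delta)\sqrt{n}]$) quantifies being ``far'' from this extremal structure, and should allow us to save $\sigma tn$ colours for some constant $\sigma=\sigma(\delta)>0$.

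I would first partition the edges as $E(\cH)=E_s\cup E_m\cup E_\ell$, where $E_s$ and $E_\ell$ are the sets of small edges (size below $(1-\delta)\sqrt{n}$) and large edges (size above $(1+\delta)\sqrt{n}$). A standard incidence count gives $|E_\ell|\cdot(1+\delta)\sqrt{n}\leq \sum_v d_\cH(v)\leq (1-\delta)tn^2$, so $|E_\ell|=O(tn^{3/2})$. The heart of the argument is then a R\"odl-nibble / Kahn--Pippenger--Spencer style random list edge-colouring of $\cH\setminus E_m$ using a palette of $(1-\sigma)tn$ colours: since $\cH$ has codegree at most $t$ and maximum degree at most $(1-\delta)tn$, the ratio $t/\Delta(\cH)$ tends to $0$, so a suitably iterated nibble should produce a partial list edge-colouring of $\cH\setminus E_m$ covering all but a vanishing proportion of the edges, with the uncoloured remainder of sufficiently low maximum degree in the line graph of $\cH\setminus E_m$ that it can be finished greedily from the residual lists. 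Because $(1-\sigma)tn\geq (1+o(1))\Delta(\cH)$ for $\sigma<\delta$, the nibble analysis should go through with $\sigma$ any constant less than $\delta$.

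Finally, the medium edges in $E_m$ need to be absorbed into the colouring. Since $|E_m|\leq (1-\delta)tn$, a trivial greedy colouring of $E_m$ alone uses at most $|E_m|$ colours, which is within the $(1-\sigma)tn$ budget; the task is to synchronise this with the colouring of $\cH\setminus E_m$. My approach would be to reserve a carefully chosen pool of colours throughout the nibble phase so that each medium edge has a non-trivial set of available colours at the end, then complete the colouring by selecting a colour for each medium edge via an absorber/booster argument of the kind developed in \cite{KKKMO2021}. The \emph{main obstacle} I expect is this absorbing step: each medium edge has size $\approx\sqrt{n}$ and conflicts with $\Theta(tn\sqrt{n})$ other edges in the line graph of $\cH$, so the reservation of colours must be designed delicately to guarantee a conflict-free extension while keeping the total palette size at $(1-\sigma)tn$. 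Tracking this and obtaining a quantitative, $t$-uniform dependence of $\sigma$ on $\delta$ is where most of the technical work would lie.
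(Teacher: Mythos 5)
There is a genuine gap, and it occurs at the step you describe as the ``heart of the argument''. You propose to colour $\cH\setminus E_m$ (which contains all edges of size larger than $(1+\delta)\sqrt n$ as well as all edges of size between a constant and $(1-\delta)\sqrt n$) by a R\"odl-nibble/Kahn--Pippenger--Spencer argument, justified only by the fact that the codegree-to-maximum-degree ratio $t/\Delta(\cH)$ tends to $0$. But all nibble theorems of this type (including Kahn's list-colouring theorem, which is the version the paper uses) require the edge sizes to be bounded by a constant; for edges of unbounded size the conclusion ``$\chi'_\ell\leq(1+o(1))\Delta$'' is simply false (a projective plane has codegree $1$, maximum degree about $\sqrt n$, and chromatic index $n$), and even with the larger palette $(1-\sigma)tn$ there is no off-the-shelf semi-random or greedy argument: an edge of size $s$ can meet up to $s\cdot\Delta(\cH)=\Theta(tn^{2})$ other edges, so the line graph of $\cH\setminus E_m$ can have maximum degree far exceeding $tn$. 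Controlling exactly this is the real content of the paper's proof: one needs the reordering lemma (Lemma~\ref{lemma:good-ordering}, built on Lemma~\ref{reordering-lemma}), which either produces an ordering of the large edges with forward degree at most $(1-2\sigma)tn$ (so greedy works) or isolates a dense cluster $\cW$ of edges of almost equal size which is then coloured via the Molloy--Reed local sparsity theorem (Corollary~\ref{sparsity-corollary}); the hypothesis that there are at most $(1-\delta)tn$ edges of size $(1\pm\delta)\sqrt n$ is used precisely there, to force the size of $\cW$-edges away from $\sqrt n$ or to derive a contradiction (Lemma~\ref{lemma:large-extremal-col}). None of this is replaced by anything in your sketch.

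The second problem is the order in which you colour: you leave the edges of size $(1\pm\delta)\sqrt n$ to the end and hope to absorb them from a reserved pool. These edges can form a near-clique of size $(1-\delta)tn$ in the line graph (e.g.\ a slightly depleted $t$-fold projective plane), so they alone need almost the entire palette, while each of them additionally conflicts with $\Theta(tn^{3/2})$ previously coloured edges whose colours you cannot control; a reservation scheme inside a total budget of $(1-\sigma)tn$ colours has no obvious way to guarantee a free colour for each of them, and you offer no mechanism beyond naming the obstacle. The paper's proof inverts this: the edges coloured last are the edges of \emph{constant} size (at most $r_1$), for which Kahn's theorem does apply and which meet only $O(r_1tn/r_0)=o(tn)$ large edges, so deleting the colours of their large neighbours from their lists costs almost nothing; the medium-size thresholds $r_1\ll r_0$ are constants, not of order $\sqrt n$ as in your partition. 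So while your opening paragraph correctly identifies the extremal structure and the role of the hypothesis, the proposed proof route breaks down both at the colouring of the unbounded-size edges and at the final absorption step, and these are not technicalities but the places where the paper's ordering and local-sparsity machinery is indispensable.
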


Note that the condition on the number of edges of size close to $\sqrt{n}$ means that $\cH$ cannot be ``close" to a $t$-fold projective plane.\COMMENT{We remark that one can easily transform the proofs of Theorems~\ref{main-thm} and~\ref{main-thm-2} into randomised polynomial-time algorithms that with high probability give a $C$-edge-colouring for any list assignment $C$ for $\cH$ where every edge $e \in \cH$ satisfies $|C(e)| \ge tn$, or $|C(e)| \ge (1 - \mu)tn$ respectively.}

To show that $t$-fold projective planes are the only extremal examples for Theorem~\ref{main-thm}, we in fact show that if equality holds in Theorem~\ref{main-thm} for some $\cH$, then $\cH$ has $tn$ pairwise intersecting edges.
To deduce that $\cH$ is a $t$-fold projective plane in this case, we prove the following theorem. Theorem~\ref{thm:char_intersect} is a version of~\cite[Theorem 8]{furedi1986} (which applies to linear hypergraphs and generalizes the de Bruijn-Erd\H{o}s theorem~\cite{bruijn_erdos1948}) for hypergraphs with maximum codegree at most $t$. 
For any hypergraph $\cH$, let $e(\cH)$ be the number of edges in $\cH$. For any $v \in V(\cH)$, let $N[v] \coloneqq \bigcup_{e \in \cH : v \in V(e)} V(e)$. A \emph{near-pencil} on $n$ vertices is a linear hypergraph isomorphic to the hypergraph $\cH$ with $V(\cH) = \{ v , v_1 , \dots , v_{n-1} \}$ and $\cH = \{ e , e_1 , \dots , e_{n-1} \}$ where $V(e) = \{ v_1 , \dots , v_{n-1} \}$ and $V(e_i) = \{ v , v_i \}$ for $1 \leq i \leq n-1$. (A near-pencil is sometimes also referred to as a degenerate plane.)

\begin{restatable}{theorem}{CharIntersect}\label{thm:char_intersect}
Let $n,t \in \mathbb{N}$.  If $\cH$ is an $n$-vertex intersecting hypergraph with
$\Delta_2(\cH) \leq t$
and no edge of size one, then $e(\cH) \leq t \cdot \max_{v \in V(\cH)}|N[v]|$.
Moreover, if equality holds, then there exists $v \in V(\cH)$ such that $e(\cH) = t \cdot |N[v]|$, $d(x) = 0$ for every $x \in V(\cH) \setminus N[v]$, and $\cH[N[v]]$ is either
\begin{itemize}
    \item a $t$-fold projective plane of order $k$ (so there exists $k \in \mathbb{N}$ with $|N[v]| = k^2 + k + 1$), or
    \item a $t$-fold near-pencil.
\end{itemize}
\end{restatable}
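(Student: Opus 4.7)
The plan is to generalise the Füredi / de Bruijn--Erd\H{o}s argument to the codegree-$t$ setting. Let $v_0\in V(\cH)$ be a vertex maximising $|N[v_0]|$, set $m\coloneqq |N[v_0]|$ and $d\coloneqq d_{\cH}(v_0)$, and let $e_1,\ldots,e_d$ be the edges of $\cH$ through $v_0$. The codegree hypothesis at $v_0$ yields
\[
\sum_{i=1}^d(|V(e_i)|-1) \;=\; \sum_{u\in N[v_0]\setminus\{v_0\}} d_{\cH}(u,v_0) \;\le\; t(m-1).
\]
The key local inequality, which should carry most of the proof, is that for every edge $e\in\cH$ with $v_0\notin V(e)$,
\[
|V(e)\cap (N[v_0]\setminus\{v_0\})| \;\ge\; d/t.
\]
Indeed, since $\cH$ is intersecting, $\sum_{i=1}^d|V(e)\cap V(e_i)|\ge d$, while this same sum equals $\sum_{u\in V(e)}d_{\cH}(u,v_0)\le t\cdot |V(e)\cap(N[v_0]\setminus\{v_0\})|$ by the codegree bound.

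To obtain the global bound $e(\cH)\le tm$, I would construct a map $\psi\colon \cH \to N[v_0]$ with $\psi(e)\in V(e)\cap N[v_0]$ for every $e$ and $|\psi^{-1}(u)|\le t$ for every $u\in N[v_0]$; such a $\psi$ immediately yields $e(\cH)\le tm$. By the defect version of Hall's theorem for $t$-matchings, the existence of $\psi$ is equivalent to the condition that, for every subfamily $\mathcal{F}\subseteq \cH$, the set $U_{\mathcal{F}}\coloneqq \bigcup_{e\in\mathcal{F}}(V(e)\cap N[v_0])$ satisfies $|U_{\mathcal{F}}|\ge |\mathcal{F}|/t$. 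I would verify this Hall condition by induction on $|V(\cH)|$: for a putative counterexample $\mathcal{F}$, passing to the intersecting subhypergraph on $U_{\mathcal{F}}\cup(V(\cH)\setminus N[v_0])$ yields a smaller instance controlled by the induction hypothesis, while the local inequality above bounds the contribution of edges that leave $U_{\mathcal{F}}$ into $V(\cH)\setminus N[v_0]$.

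For the equality case $e(\cH)=tm$, tracing equality through each of the inequalities above forces: $d_{\cH}(u,v_0)=t$ for every $u\in N[v_0]\setminus\{v_0\}$; $|V(e)\cap V(e_i)|=1$ for every edge $e$ with $v_0\notin V(e)$ and every $i\in [d]$; $|V(e)\cap N[v_0]|=d/t$ for every such $e$; and $d_{\cH}(x)=0$ for every $x\notin N[v_0]$, so every edge of $\cH$ lies inside $N[v_0]$. Since the derivation above applies at any vertex $v$ with $d_{\cH}(v)>0$, giving $e(\cH)\le t|N[v]|$, equality in fact forces $|N[v]|=m$ for every such $v$, and running the same equality analysis at each $v$ shows that any two edges of the support hypergraph $\cH_0$ (obtained from $\cH$ by identifying edges with the same vertex set) through a common vertex share only that vertex. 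Hence $\cH_0$ is linear, and $\cH$ is the $t$-fold copy of an intersecting linear covering hypergraph $\cH_0$ on $m$ vertices with exactly $m$ edges. The classical de Bruijn--Erd\H{o}s theorem then identifies $\cH_0$ as either a projective plane of order $k$ (so $m=k^2+k+1$) or a near-pencil, yielding the desired structural conclusion.

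The main obstacle is verifying the Hall condition in the second paragraph: since Hall's condition applied to $\mathcal{F}=\cH$ is precisely the bound we wish to prove, any argument must propagate the bound to all smaller subfamilies via a delicate induction. In the linear $(t=1)$ case this follows automatically from Fisher's inequality applied to the incidence matrix, but for $t\ge 2$ the intersecting hypothesis and the key local inequality must be combined constructively at each inductive step to bootstrap the bound.
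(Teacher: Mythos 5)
The decisive step of your plan --- establishing the defect Hall condition for the $t$-matching $\psi$ --- is exactly where the argument breaks, and you have not supplied a proof of it. The condition you need, $|U_{\mathcal F}|\ge |\mathcal F|/t$ for \emph{every} subfamily $\mathcal F\subseteq\cH$ with traces taken inside $N[v_0]$, is strictly stronger than the inequality $e(\cH)\le t|N[v_0]|$ you are trying to prove (take $\mathcal F=\cH$), and the sketched induction does not deliver it: after restricting to $U_{\mathcal F}\cup(V(\cH)\setminus N[v_0])$, the induction hypothesis only bounds $|\mathcal F|$ by $t\max_v|N_{\mathcal F}[v]|$ computed in the restricted hypergraph, and this maximum is typically attained using the vertices of the edges of $\mathcal F$ \emph{outside} $N[v_0]$, so it is not comparable to $t|U_{\mathcal F}|$; your local inequality $|V(e)\cap N[v_0]|\ge d/t$ does not repair this, since many edges of $\mathcal F$ may share the same small trace on $N[v_0]$ while spreading out elsewhere. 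A second concrete problem is the assertion in your equality analysis that ``the derivation applies at any vertex $v$ with $d(v)>0$, giving $e(\cH)\le t|N[v]|$'': this is false in general. For $t=2$ take the six edges consisting of two copies each of $\{v_0,u\}$, $\{u,x\}$, $\{u,y\}$; this is intersecting, has codegree $2$ and no singleton edge, yet $t|N[x]|=4<6=e(\cH)$. So the pointwise bound holds only at a suitable extremal vertex, and in the equality case the statement that all vertices have full closed neighbourhood is a genuine claim that must be derived from tightness of inequalities actually used in the proof of the bound --- which, in your write-up, was never completed.

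For comparison, the paper avoids matchings altogether. It works at a vertex $v_{\max}$ of maximum \emph{degree}, proves the local bound $d(x)\le t\,|N[v_{\max}]\cap V(e)|$ for $x\in N[v_{\max}]$ with $x\notin V(e)$ (essentially the dual of your local inequality), and then applies a short Motzkin-type averaging identity (Proposition~\ref{prop:motzkin}) to the incidence bipartite graph between $N[v_{\max}]$ and $\cH$: nonnegativity of the averaged sum together with the local bound yields $e(\cH)\le t|N[v_{\max}]|$ in one step, and in the equality case every summand must vanish, which is precisely what forces $N[x]=V(\cH)$ for all $x$, the codegree-exactly-$t$ structure, $t$-fold linearity, and finally the reduction to the de Bruijn--Erd\H{o}s/F\"uredi classification (Theorem~\ref{thm:charlinear}), which you also invoke at the end. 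If you want to keep your matching-based framework, you would need to prove the fractional/Hall statement directly, and it is likely that any such proof ends up reproducing an averaging argument of this kind; switching to the counting identity removes the obstruction entirely and makes the equality analysis fall out automatically.
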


Related questions are also discussed in the recent survey by the authors \cite{KKKMO2021_survey}. Moreover, Kelly, K\"uhn and Osthus~\cite{KKO2021} investigate the list chromatic number of graphs which are the ``nearly disjoint'' union of graphs of maximum degree $D$. This turns out to be closely related to a (wide open) conjecture of Vu~\cite{Vu02} on colouring graphs of small codegree.

\subsection{Organisation of the paper} 
In Section~\ref{sec:notation}, we introduce some notation that we follow throughout the paper. 
In Section~\ref{sec:overview}, we present an overview of the proof of Theorem~\ref{main-thm}.
In Section~\ref{large-edge-section}, we prove Lemmas~\ref{lemma:good-ordering} and \ref{lemma:good-ordering-extremal} that allow us to colour ``large'' edges efficiently. In Section~\ref{fpp-extremal-section}, we prove Lemma~\ref{extremal-case-lemma} which is used to colour edges having size at least $(1-o(1))\sqrt{n}$. In Section~\ref{section:proof}, we put everything together and prove Theorems~\ref{main-thm} and~\ref{main-thm-2}. Finally, in Section~\ref{sec:char} we prove Theorem~\ref{thm:char_intersect} that allows us to characterize the hypergraphs for which equality holds in Theorem~\ref{main-thm}. (In particular, Theorem~\ref{thm:char_intersect} is used in the proof of Theorem~\ref{main-thm}.)

\section{Notation}
\label{sec:notation}
For every integer $n \geq 0$, let $[n] \coloneqq \{ m \in \mathbb{N} : 1 \leq m \leq n \}$. For $a,b,c,d \in \mathbb{R}$ and $b>0$, we write $c = a \pm b$ if $a-b \leq c \leq a+b$ and if $c>0$ we write $d = (a \pm b)c$ if $(a-b)c \leq d \leq (a+b)c$. 

We will use the ``$\ll$'' notation when stating our results. The constants in the hierarchies used to state our results have to be chosen from right to left. More precisely, if we claim that a result holds whenever $0 < a \ll b \le 1,$ then this means that there exists a non-decreasing function $f : (0, 1] \mapsto (0, 1]$ such that the result holds for all $0 < a, b \le 1$ with $a \le f(b)$. We will not calculate these functions explicitly. Hierarchies with more constants are defined in a similar way.

Let $\cH$ be a hypergraph. A vertex $v \in V(\cH)$ is said to be \emph{contained} in an edge $e \in \cH$, or an edge $e \in \cH$ is said to \emph{contain} a vertex $v \in V(\cH)$ if $v \in V(e)$. The edges $f_1 , \dots , f_k \in \cH$ are \emph{pairwise intersecting} if the sets $V(f_1) , \dots , V(f_k)$ are pairwise intersecting. 
The~\emph{rank} of $\cH$ is defined as $\rank(\cH) \coloneqq \max_{e \in \cH} |V(e)|$, and the~\emph{antirank} of $\cH$ is defined as $\antirank(\cH) \coloneqq \min_{e \in \cH} |V(e)|$.

For $S \subseteq \cH$ and $u, v \in V(\cH)$, let $d_S(u)$ be the number of edges in $S$ containing $u$, and the codegree $d_{S}(u,v)$ of two distinct vertices $u$ and $v$ is $|E_{S}(u,v)|$, where $E_{S}(u,v)$ denotes the set of edges in $S$ containing both $u$ and $v$. 

For every $v \in V(\cH)$, let $N_{\cH} [v] \coloneqq \bigcup_{e \in \cH : v \in V(e)} V(e)$ and let $N_{\cH}(v) \coloneqq N_{\cH}[v] \setminus \{ v \}$.  
For every edge $e \in \cH$, the set $N_{\cH}(e)$ denotes the set of edges $f \in \cH \setminus \{e \}$ such that $V(e) \cap V(f) \ne \varnothing$. 
The subscript $\cH$ in $N_{\cH}[v]$, $N_{\cH}(v)$, and $N_{\cH}(e)$ may be omitted if it is clear from the context.  If $\cH'\subseteq \cH$ and $e\in \cH$, then we denote $N_\cH(e)\cap \cH'$ by $N_{\cH'}(e)$, even if $e\notin\cH'$.

If $\ordering$ is a linear ordering of the edges of a hypergraph $\cH$, for each $e\in\cH$, we define $\fwdnbr_\cH(e)\coloneqq \{f \in N_\cH(e) : f\ordering e \}$ and $\fwddeg_\cH(e)\coloneqq |\fwdnbr_\cH(e)|$.  We omit the subscript $\cH$ when it is clear from the context. 
For each $e\in \cH$, we also let $\cH^{\ordering e} \coloneqq \{f\in \cH : f \ordering e\}$.
For disjoint $\cH_1 , \dots , \cH_k \subseteq \cH$, we say $\cH_1 \ordering \dots \ordering \cH_k$ if $e_1 \ordering \dots \ordering e_k$ for all $e_i \in \cH_i$ and $i \in [k]$.

For every subset $S \subseteq \cH$,  the \emph{volume} of $S$ is defined as $\vol_{\cH}(S) \coloneqq \sum_{e \in S} \binom{|V(e)|}{2} / \binom{n}{2}$, where $n$ is the number of vertices of $\cH$. This measures the average codegree of $S$ if we regard $S$ as a spanning subhypergraph of $\cH$. In particular, if the maximum codegree of $\cH$ is at most $t$, then $\vol_{\cH}(S) \leq t$.

For every subset $S \subseteq V(\cH)$, let $\cH[S]$ be the hypergraph such that $V(\cH[S]) \coloneqq S$ and $\cH[S] \coloneqq \{ e \in \cH : V(e) \subseteq S \}$. If $V(e) \cap S \ne \varnothing$ for every $e \in \cH$, then let $\cH|_{S}$ be the hypergraph with $V(\cH|_{S}) \coloneqq S$ and $\cH|_{S} \coloneqq \{ e^* : e \in \cH \}$, where $V(e^*) \coloneqq V(e) \cap S$ for every $e \in \cH$.

\section{Proof overview}\label{sec:overview}
In this section, we will sketch the main ideas for proving Theorem~\ref{main-thm}. For the sake of clarity, we will discuss bounding the chromatic index rather than the list chromatic index, but this part still captures the main ideas of the proof.

Let $0 < 1/n_0 \ll 1/r_0 \ll 1/r_1 \ll \delta \ll \eps < 1$, let $\cH$ be an $n$-vertex hypergraph with $n \geq n_0$, $\Delta(\cH) \leq (1 - \eps)tn$, and $\Delta_2(\cH) \leq t$. 
We partition the edges of $\cH$ as follows. Let $\cH = \cH_{\rm lrg} \cup \cH_{\rm med} \cup \cH_{\rm sml}$, where
\begin{align*}
    \cH_{\rm lrg} \coloneqq \{ e \in \cH : |V(e)| > r_0 \}, \: \cH_{\rm med} \coloneqq \{ e \in \cH : |V(e)| \in (r_1 , r_0] \}, \: \cH_{\rm sml} \coloneqq \{ e \in \cH : |V(e)| \leq r_1 \}.
\end{align*}

Let us start by recalling the ideas of Kahn's~\cite{kahn1992coloring} proof that the Erd\H{o}s-Faber-Lov\'asz Conjecture holds asymptotically, which can also be used to show that $\chi'(\cH) \leq (1 + o(1))tn$.
Let $\ordering$ be an ordering of the edges of $\cH_{\rm lrg}$ such that for any $e, f \in \cH_{\rm lrg}$, we have $e \ordering f$ whenever $|V(e)| > |V(f)|$. Since $\Delta_2(\cH) \leq t$, for each $e \in \cH_{\rm lrg}$, it follows that $d^{\ordering}(e) + 1 \leq r_0 tn / (r_0 - 1)$ by Observation~\ref{obs:degree}. 
Thus, 
\begin{equation}\label{eq:large-greedy}
    \chi'(\cH_{\rm lrg}) \leq \frac{r_0 tn}{r_0 - 1}. 
\end{equation}
Note that since $\Delta_2(\cH) \leq t$, we have
$\Delta(\cH_{\rm med}) \leq tn / (r_1 - 1)$ by Observation~\ref{obs:degree}. Hence, the following theorem of Kahn~\cite{kahn1996asymptotically} shows that 
\begin{equation}\label{eq:medium-chromatic-bound}\chi'(\cH_{\rm med}) \leq \frac{2tn}{r_1 - 1}.
\end{equation}

\begin{restatable}[Kahn~\cite{kahn1996asymptotically}]{theorem}{KahnTheorem}\label{thm:kahn}
For any $k, \eps > 0$, there exists $\delta > 0$ such that the following holds. If $\cH$ is a hypergraph with $\rank(\cH) \leq k$, $\Delta(\cH) \leq D$, and $\Delta_2(\cH) \leq \delta D$, then $\chi_\ell'(\cH) \leq (1+\eps)D$.
\end{restatable}

Theorem~\ref{thm:kahn} can also be used to show that
\begin{equation}\label{eq:coloring-small-and-large-together}
    \chi'(\cH_{\rm sml} \cup \cH_{\rm lrg}) \leq \max \{ \chi'(\cH_{\rm lrg}),\: \lfloor (1 - \eps/2) tn \rfloor \}.
\end{equation}
Indeed, let $\phi_{\rm lrg} : \cH_{\rm lrg} \to [ \chi'(\cH_{\rm lrg})  ]$ be a proper edge-colouring of $\cH_{\rm lrg}$.
For each $e \in \cH_{\rm sml}$, 
let $C'(e) \coloneqq \{ \phi_{\rm lrg}(f) : V(e) \cap V(f) \ne \varnothing,\:f \in \cH_{\rm lrg} \}$. 
Since $\Delta_2(\cH) \leq t$ and every edge of $\cH_{\rm sml}$ has size at most $r_1$, every $e \in \cH_{\rm sml}$ intersects at most $r_1 t n / (r_0 - 1) \leq \eps tn / 10$ edges in $\cH_{\rm lrg}$ (by Observation~\ref{obs:degree}), so $|C'(e)| \leq \eps tn / 10$. Let $L(e) \coloneqq [\lfloor (1 - \eps/2) tn \rfloor] \setminus C'(e)$ for each $e \in \cH_{\rm sml}$. Then $|L(e)| \geq (1 - \eps) tn + \eps tn / 100$. 
As $\Delta(\cH_{\rm sml}) \leq \Delta(\cH) \le (1 - \eps)tn$, by Theorem~\ref{thm:kahn}, there exists a list $L$-edge-colouring $\phi_{\rm sml}$ of $\cH_{\rm sml}$, and by the definition of $L$, we can combine $\phi_{\rm lrg}$ and $\phi_{\rm sml}$ to obtain a proper edge-colouring of  $\cH_{\rm lrg} \cup \cH_{\rm sml}$ using at most $\max \{ \chi'(\cH_{\rm lrg}),\: \lfloor (1 - \eps/2) tn \rfloor \}$ colours, as desired.

By simply colouring $\cH_{\rm med}$ and $\cH_{\rm sml}\cup\cH_{\rm lrg}$ with disjoint sets of colours, we have
\begin{equation}
    \label{eq:coloring-medium-separately}
    \chi'(\cH) \leq \chi'(\cH_{\rm med}) + \chi'(\cH_{\rm{sml}} \cup \cH_{\rm{lrg}}).
\end{equation}
Combining \eqref{eq:large-greedy}, \eqref{eq:medium-chromatic-bound}, \eqref{eq:coloring-small-and-large-together} and \eqref{eq:coloring-medium-separately}, we have 
\begin{align*}\label{eqn:kahn_bound}
    \chi'(\cH) \leq \chi'(\cH_{\rm med}) + \chi'(\cH_{\rm sml} \cup \cH_{\rm lrg}) \leq \frac{2tn}{r_1 - 1} + \max \{ \chi'(\cH_{\rm lrg}) ,\: (1 - \eps/2)tn \} \leq (1 + \eps) tn.    
\end{align*}

In view of the above bound, the main challenge then lies in improving this bound to $tn$. Indeed, to prove that $\chi'(\cH) \leq tn$, we cannot use~\eqref{eq:large-greedy}, and we need to show that the edges of $\cH_{\rm lrg}$ can be properly coloured with $tn$ colours. Furthermore, since $\cH_{\rm lrg}$ could use all $tn$ colours (e.g., when $\cH_{\rm lrg}$ is a $t$-fold projective plane), 
we may not be able to let $\cH_{\rm lrg}$ and $\cH_{\rm med}$ use disjoint sets of colours as in~\eqref{eq:coloring-medium-separately}.

To overcome these difficulties, for any $\tau \in (0,1)$ with $1/r_1 \ll \tau < 1$, 
one of our key lemmas (Lemma~\ref{reordering-lemma}, which we call the ``Reordering Lemma'') finds an ordering $\ordering$ of the edges of  $\cH_{\rm med} \cup \cH_{\rm lrg}$ such that either
\begin{itemize}
    \item
    $d^{\ordering}(e) \leq (1 - \tau)tn$ for all $e \in \cH_{\rm med} \cup \cH_{\rm lrg}$, or
    
    \item
    there exists a subset $\cW$ of consecutive edges with `volume' $(1 - o_\tau (1))t$, and $d^\ordering (e) \leq (1 - \tau)tn$ for all $e \in \cH_{\rm med} \cup \cH_{\rm lrg}$ with $\cW \ordering e$.
\end{itemize}

We may assume that the second outcome holds, as otherwise~\eqref{eq:medium-chromatic-bound}--\eqref{eq:coloring-medium-separately} shows that $\chi'(\cH) \leq tn$. 
One of the key definitions is the~\emph{volume} $\vol_\cH(S)$ of a subset $S \subseteq \cH$, denoted $\vol_\cH(S) \coloneqq \sum_{e \in S} \binom{|V(e)|}{2}/\binom{n}{2}$. Note that $\vol_\cH(\cH) \leq t$ as $\Delta_2(\cH) \leq t$. 
We often make use of the following property that the volume is additive over disjoint subsets. 
\begin{obs}\label{obs:volume-additive}
For any disjoint $S_1 , S_2 \subseteq \cH$, we have $\vol_\cH(S_1) + \vol_\cH(S_2) = \vol_\cH(S_1 \cup S_2) \leq t$.
\end{obs}

Thus, applying the Reordering Lemma (Lemma~\ref{reordering-lemma}) again to the subhypergraph $\cH' \subseteq \cH$ consisting of the edges $e \ordering \cW$ with a suitable choice of $\tau$ (say $\tau'$), we can find an ordering $\ordering'$ of the edges of $\cH'$ so that $d^{\ordering'}(e) \leq (1 - \tau')tn$ for $e \in \cH'$, since otherwise we obtain a subset $\cW' \subseteq \cH'$ disjoint from $\cW$ with $\vol_\cH(\cW) + \vol_\cH(\cW') > t$, contradicting the assumption that $\Delta_2(\cH) \leq t$. 

Based on this idea, Lemma~\ref{reordering-lemma} is applied via two lemmas (Lemmas~\ref{lemma:good-ordering} and~\ref{lemma:good-ordering-extremal})  depending on the structure of $\cH$, and in particular, whether or not the number of edges of size $(1 \pm \delta) \sqrt{n}$ is less than $(1 - \delta) tn$.

If $\cH$ has fewer than $(1 - \delta) tn$ edges of size $(1 \pm \delta) \sqrt{n}$, then for any $\mu \in (0,1)$ with $1/r_1 \ll \mu \ll \delta < 1$, Lemma~\ref{lemma:good-ordering} provides an ordering $\ordering$ of the edges of $\cH_{\rm lrg}$ and a partition of $\cH_{\rm lrg}$ into three spanning subhypergraphs $\cH_2$, $\cW$, and $\cH_1$ satisfying~\ref{W-max-edge-size}--\ref{H2-edge-size}~ and~\ref{reordering-goodness1}--\ref{partition-order}. 
We will then show that the line graph of $\cW$ is `locally sparse' (in Lemma~\ref{local-sparsity-lemma}) and thus, by a result of Molloy and Reed \cite{MR02} (Theorem~\ref{local-sparsity-theorem}), we have that $\chi'(\cW) \leq (1 - \delta/500) tn$ (as in Corollary~\ref{sparsity-corollary}). Using this fact, we deduce that $\chi'(\cH_{\rm lrg}) \leq (1 - \mu) tn$ (in Lemma~\ref{lemma:large-extremal-col}). As before, we can then extend the colouring to $\cH_{\rm sml}$ (as in \eqref{eq:coloring-small-and-large-together}) and show that $\chi'(\cH) \leq tn$ by \eqref{eq:medium-chromatic-bound} and \eqref{eq:coloring-medium-separately}.

Otherwise, if $\cH$ has at least $(1 - \delta)tn$ edges of size $(1 \pm \delta) \sqrt{n}$, then the set of these edges has volume $(1 - O(\delta))t$. 
Using this fact and Observation~\ref{obs:volume-additive}, and by applying the Reordering Lemma (Lemma~\ref{reordering-lemma}) twice, we prove 
Lemma~\ref{lemma:good-ordering-extremal} which provides an ordering $\ordering$ and a partition of $\cH_{\rm med} \cup \cH_{\rm lrg}$ into three spanning subhypergraphs $\cH_3$, $\cH_2$, and $\cH_1$ satisfying \ref{H1-edge-size}, \ref{W-min-edge-size}, and \ref{reordering-goodness3}--\ref{partition-order2}. 
In this case, we cannot use disjoint sets of colours for $\cH_{\rm med}$ and $\cH_{\rm lrg}$, so we show that there is a proper edge-colouring of $\cH_{\rm med} \cup \cH_{\rm lrg}$ using at most $tn$ colours in which at most $\eps tn / 2$ colours are used for colouring the edges of $\cH_{\rm med}$. This allows us to extend the colouring to $\cH_{\rm sml}$ and show that $\chi'(\cH) \leq tn$ by the same argument used to prove \eqref{eq:coloring-small-and-large-together}. Let us finish by saying a few words about how we colour the edges of $\cH_{\rm med} \cup \cH_{\rm lrg}$. Here we will have $\antirank(\cH_3) \geq (1 - 2\delta)\sqrt{n}$, so we can properly colour its edges with $tn$ colours by Lemma~\ref{extremal-case-lemma}. Then using the fact that $\fwddeg_\cH(e) \leq tn - 2$ for every $e\in \cH_2$, we extend the colouring of $\cH_3$ to $\cH_2$ using no additional colours. Finally, since $d^{\ordering}(e) \leq \gamma tn$ for every $e \in \cH_1$ (for $\gamma \ll \eps$) and $\cH_{\rm med} \subseteq \cH_1$, we can properly colour the edges of $\cH_{1}$ using only a small subset of the colours, as desired.

\section{Reordering large edges}\label{large-edge-section}

In the proofs of Theorems~\ref{main-thm} and~\ref{main-thm-2}, we first colour the ``large'' edges (edges of size at least $r$ for some $r$ satisfying $1 / r \ll \eps$ in Theorem~\ref{main-thm} and $1 / r \ll \delta$ in Theorem~\ref{main-thm-2}) before extending the colouring to the remaining ``small'' edges.  This section is devoted to proving the following two lemmas, which provide additional structure among the ``large'' edges that enables us to colour them efficiently.

\begin{lemma}\label{lemma:good-ordering}
    Let $0 < 1/n_0 \ll 1/r \ll \mu \ll \delta \ll 1$, and let $n, t\in\mathbb N$ where $n \geq n_0$.  Let $\mathcal H$ be an $n$-vertex hypergraph
    with $\Delta_2(\cH) \leq t$ and $\antirank(\cH) > r$.
    If $\chi'_\ell(\cH) > (1 - \mu)tn$, then there exists a partition of $\cH$ into three spanning subhypergraphs $\cH_1$, $\cW$, and $\cH_2$ such that
  \begin{enumerate}[label=(P\arabic*)]
  \item\label{W-max-edge-size} 
  $\rank(\cW) \leq (1 + \delta)\antirank(\cW)$ and 
  \item\label{W-volume2} $\vol_\cH(\cW) \geq (1 - \delta)t$,
  \item\label{H2-edge-size} $\antirank(\cH_2) \geq \rank(\cW)$.
  \end{enumerate}
  and a linear ordering $\ordering$ of the edges of $\cH$ such that
  \begin{enumerate}[label=(FD\arabic*)]
  \item\label{reordering-goodness1} $\fwddeg_\cH(e) \leq (1 - 2\mu)tn$ for all $e\in \cH_1$, 
  \item\label{reordering-goodness2} $\fwddeg_\cH(e) \leq tn/2000$ for all $e\in \cH_2$, and
  \item\label{partition-order} 
  $\cH_2 \ordering \cW \ordering \cH_1$.
  \end{enumerate}
\end{lemma}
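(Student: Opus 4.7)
The plan is to identify a ``dominant'' size window $\cW$ that carries almost all of the codegree volume, place the few larger edges $\cH_2$ at the start of the ordering (handled by a greedy peel on the line graph), $\cW$ in the middle, and the smaller edges $\cH_1$ at the end (ordered so that most predecessors of each $\cH_1$-edge sit in $\cW \cup \cH_2$, whose per-vertex density is controlled by the codegree bound). The near-extremality hypothesis $\chi'_\ell(\cH) > (1-\sigma)tn$ enters only to force such a $\cW$ to exist.

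First I would bucket the edges of $\cH$ by size into $O(\log n / \delta)$ geometric classes $B_i \coloneqq \{e \in \cH : (1+\eta)^i r < |V(e)| \leq (1+\eta)^{i+1} r\}$ for a suitably small $\eta = \Theta(\delta)$; since $\cH$ has codegree at most $t$, $\sum_i \vol_\cH(B_i) \leq t$. The key dichotomy is: either some $B_{i^*}$ has $\vol_\cH(B_{i^*}) \geq (1-\delta)t$, in which case setting $\cW \coloneqq B_{i^*}$, $\cH_2 \coloneqq \bigcup_{j > i^*} B_j$, and $\cH_1 \coloneqq \bigcup_{j < i^*} B_j$ yields (P1)--(P3) at once; or no bucket is dominant, in which case I would derive $\chi'_\ell(\cH) \leq (1-\sigma)tn$ directly, contradicting the hypothesis. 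For the latter I would order $\cH$ from largest to smallest and observe that the forward degree of $e$ is at most $\sum_{v \in V(e)} d_{\cH_{\geq |V(e)|}}(v) \leq |V(e)| \cdot t(n-1)/(|V(e)|-1) = tn + o(tn)$ in general; telescoping volumes across buckets and using that every bucket contributes strictly less than $(1-\delta)t$ would sharpen this to $(1-2\sigma)tn$, so greedy list colouring then finishes.

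To construct the ordering in the dominant case, I would place every edge of $\cH_2$ before every edge of $\cW$, and every edge of $\cW$ before every edge of $\cH_1$, meeting (FD3). For (FD2), inside $\cH_2$ I would iteratively remove an edge of minimum line-graph degree: since $\vol_\cH(\cH_2) \leq \delta t$ and every $f \in \cH_2$ has size at least $s^\star \coloneqq \min_{f \in \cW} |V(f)|$, the codegree bound yields $d_{\cH_2}(v) \leq t(n-1)/(s^\star-1)$, and a routine count of intersecting pairs then shows that the average line-graph degree in any subhypergraph of $\cH_2$ is well below $tn/2000$, so the peel produces an ordering satisfying (FD2). For (FD1), I would use an analogous reverse-degeneracy ordering inside $\cH_1$ together with the bucket-based estimate $|N_{\cW \cup \cH_2}(e)| \leq |V(e)| \cdot t(n-1)/(s^\star-1)$; the additional saving to push this below $tn$ by a factor of $1-2\sigma$ would be extracted from the same volume-spread slack that underlies the dichotomy, using $\sigma \ll \delta$.

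The main obstacle is the dichotomy itself: the naive largest-to-smallest greedy only bounds forward degrees by $tn + o(tn)$, so extracting the required saving of $\sigma tn$ from the mere absence of a dominant bucket calls for a careful telescoping that converts $\sum_i \vol_\cH(B_i) \leq t$ with all $\vol_\cH(B_i) < (1-\delta)t$ into a uniform per-edge improvement. I expect this volumetric argument, together with its adaptation to produce the final $(1-2\sigma)tn$ bound in (FD1), to be the most delicate part of the proof.
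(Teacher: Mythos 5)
Your outline reproduces the shape of the paper's conclusion, but the branch of your dichotomy that is supposed to use the hypothesis $\chi'_\ell(\cH) > (1-\sigma)tn$ does not work as described, and this is precisely where the difficulty of the lemma lies. Concretely, take $\cH$ to be a $\lceil (1-2\delta)t\rceil$-fold projective plane of order $k$ with $n = k^2+k+1$ (all edge sizes $k+1 > r$, codegree at most $t$). Every edge has the same size, so every geometric bucket $B_i$ has volume at most $(1-2\delta)t < (1-\delta)t$ and your ``dominant bucket'' case does not occur; yet in \emph{every} linear ordering the last edge has forward degree about $(1-2\delta)tn \gg (1-2\sigma)tn$, since the hypergraph is intersecting with that many edges. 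So the largest-to-smallest greedy bound cannot be ``sharpened by telescoping bucket volumes'' to $(1-2\sigma)tn$: no ordering with that property exists, and your proposed derivation of $\chi'_\ell(\cH)\le(1-\sigma)tn$ in the non-dominant case collapses (here $\chi'_\ell(\cH)=(1-2\delta)tn$ is indeed small, but not for the reason you give, and certifying it would require a colouring argument, not a degeneracy bound). The paper avoids this by making the dichotomy about \emph{orderings} rather than buckets: Lemma~\ref{reordering-lemma} (applied with $\tau=2\sigma$, $K=1$, so the saving is measured against $\sigma$, not $\delta$) says that either all forward degrees can be made at most $(1-2\sigma)tn$, or a single edge $e^*$ of huge forward degree forces a window $\cW$ of comparable-size edges with $\vol_\cH(\cW)\ge(1-\delta)t$; proving this is a genuinely delicate double count over the first \emph{and second} neighbourhoods of $e^*$ (via Proposition~\ref{fwdnbr-prop}), not a routine volume computation. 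The same gap recurs in your treatment of \ref{reordering-goodness1}: for $e\in\cH_1$ the estimate $|N_{\cW\cup\cH_2}(e)|\le |V(e)|\cdot t(n-1)/(s^\star-1)$ only gives roughly $tn$, and the extra factor $(1-2\sigma)$ you defer to ``volume-spread slack'' is exactly property~\ref{ordering-goodness} of Lemma~\ref{reordering-lemma}, i.e.\ the hard part.

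The argument you sketch for \ref{reordering-goodness2} is also unsubstantiated. With $\vol_\cH(\cH_2)\le\delta t$ and all sizes at least $s^\star$, where $s^\star$ may be as small as $r$ (a constant, far below $\sqrt{\delta n}$), the routine first-moment counts give for a subfamily with $m$ edges only a bound of the form $\min\bigl(m,\ O(\delta t^2 n^3/(m (s^\star)^2)) + O(tn/s^\star)\bigr)$ on its average line-graph degree, which can far exceed $tn/2000$; so ``a routine count of intersecting pairs'' does not show the min-degree peel produces forward degrees at most $tn/2000$. The needed degeneracy statement is true for $\delta$ small, but establishing it is again essentially Lemma~\ref{reordering-lemma}: the paper obtains \ref{reordering-goodness2} by applying that lemma to $\cH_2$ with $\tau=1-1/2000$ and $K=2000^2$, and excluding alternative~\ref{reordering-volume} because it would produce a set of volume more than $\delta t$ disjoint from $\cW$, contradicting \ref{W-volume2}. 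In short, the three quantitative steps of your plan --- the dichotomy, the $tn/2000$ bound for $\cH_2$, and the $(1-2\sigma)tn$ bound for $\cH_1$ --- are each asserted rather than proved, and each is an instance of the reordering lemma that constitutes the real content of this section.
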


We remark that the coefficient 1/2000 from~\ref{reordering-goodness2} can be replaced with any sufficiently small constant $\delta^* \in (0,1)$ with $\delta^*/\delta$ sufficiently large. 

\begin{lemma}\label{lemma:good-ordering-extremal}
    Let $0 < 1/n_0 \ll 1/r_0 \ll  1/r_1 \ll \delta \ll \gamma \ll 1$, and let $n, t\in\mathbb N$ where $n \geq n_0$.  Let $\mathcal H$ be an $n$-vertex hypergraph 
    with $\Delta_2(\cH) \leq t$ and $\antirank(\cH) > r_1$.
    If the number of edges in $\cH$ of size $(1 \pm \delta)\sqrt{n}$ is at least $(1-\delta)tn$, then there exists a partition of $\cH$ into three spanning subhypergraphs $\cH_1$, $\cH_2$, and $\cH_3$ such that
  \begin{enumerate}[label=(P'\arabic*)]
  \item\label{H1-edge-size} 
  every $e \in \cH$ satisfying $|V(e)| \leq r_0$ is in $\cH_1$ and
  \item\label{W-min-edge-size} 
  $\antirank(\cH_3) \geq (1 - 2\delta) \sqrt{n}$,
  \end{enumerate}
  and a linear ordering $\ordering$ of the edges of $\cH$ such that
  \begin{enumerate}[label=(FD'\arabic*)]
  \item\label{reordering-goodness3} $\fwddeg_\cH(e) \leq tn - 2$ for every $e\in \cH_2$,
  \item\label{reordering-goodness4} $\fwddeg_\cH(e) \leq \gamma tn$ for every $e\in \cH_1$, and
  \item\label{partition-order2} 
  $\cH_3 \ordering \cH_2 \ordering \cH_1$.
  \end{enumerate}
\end{lemma}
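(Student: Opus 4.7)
My plan is to partition $\cH$ by edge size and then choose the ordering within each part via a greedy degeneracy peel, leveraging the volume bound forced by the hypothesis. Set $\cH_3 \coloneqq \{e \in \cH : |V(e)| \geq (1-2\delta)\sqrt{n}\}$ so that (P'2) is automatic, $\cH_1 \coloneqq \{e \in \cH : |V(e)| \leq r_0\}$ so that (P'1) is automatic, and $\cH_2 \coloneqq \cH \setminus (\cH_1 \cup \cH_3)$. Build $\ordering$ so that every edge of $\cH_3$ precedes every edge of $\cH_2$, which precedes every edge of $\cH_1$, giving (FD'3) immediately; the internal orderings within $\cH_2$ and $\cH_1$ are specified by the peels below.

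The main input is the volume estimate. Since $\vol_\cH(\cH) \leq t$ and the $\geq (1-\delta)tn$ edges of size $(1\pm\delta)\sqrt{n}$ alone contribute $\geq (1-O(\delta))t$ to the volume, we have $\vol_\cH(\cH \setminus \cH_3) = O(\delta)t$, equivalently $\sum_{e \notin \cH_3}|V(e)|^2 = O(\delta) t n^2$. Combined with the codegree-based bound $d_{\cH'}(v) \leq t(n-1)/s$ whenever every edge of $\cH'$ through $v$ has size $\geq s+1$, this yields two key estimates: (a) for every $e \in \cH_2$, $|N_{\cH_3}(e)| \leq |V(e)| \cdot \max_{v \in V(e)} d_{\cH_3}(v) \leq t(n-1)$, close to but within the cap $tn-2$ of (FD'1); (b) for every $e \in \cH_1$, $|N_{\cH_3}(e)| \leq r_0 \cdot (1+O(\delta))t\sqrt{n} = o(tn)$, hence $\ll \gamma tn$. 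The volume bound also makes the intersection graphs of $\cH_2$ and $\cH_1$ sparse: the number of intersecting pairs in $\cH_2$ is at most $\sum_v \binom{d_{\cH_2}(v)}{2} = O(\delta) t^2 n^3/r_0^2$, and an even tighter bound holds inside $\cH_1$.

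I would then order $\cH_2$ by an Erd\H{o}s--Gallai/degeneracy peel: iteratively, from the current remaining set $\cH' \subseteq \cH_2$, remove an edge $e$ minimizing $|N_{\cH'}(e)|$ and append $e$ to the end of the $\cH_2$-block of $\ordering$. The sparsity of the intersection graph of $\cH_2$, combined with the trivial bound $|N_{\cH'}(e)| \leq |\cH'| - 1$ for small $\cH'$, ensures that the minimum degree in every non-empty $\cH' \subseteq \cH_2$ is small enough that the resulting $\fwddeg(e)$ stays within the cap of (FD'1) once combined with (a). The analogous peel inside $\cH_1$, with an even sharper degeneracy bound thanks to $|V(e)| \leq r_0$, together with (b) and a parallel bound on $|N_{\cH_2}(e)|$ for $e \in \cH_1$, gives (FD'2).

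The main obstacle will be controlling $\fwddeg$ \emph{uniformly} rather than on average: the codegree bound alone permits isolated ``bad'' vertices $v$ with $d_{\cH \setminus \cH_3}(v)$ as large as $tn/r_1$, and a small edge $e \in \cH_1$ through such a $v$ can naively have $|N(e)|$ exceeding $\gamma tn$. The hierarchy $1/r_1 \ll \delta \ll \gamma$ is what makes this work: Markov applied to $\sum_v d_{\cH \setminus \cH_3}(v) = O(\delta) tn^2/r_1$ bounds the number of bad vertices, and the degeneracy peel inside $\cH_1$ absorbs the problematic edges by placing them late in $\cH_1$'s block so that most of their intersecting neighbors within $\cH_1$ appear \emph{after} them in $\ordering$ and do not contribute to $\fwddeg$. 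Verifying that this absorption succeeds for \emph{every} sub-hypergraph encountered during the peel -- and checking that when combined with the contributions from $\cH_3$ and $\cH_2$ the final bound stays under $\gamma tn$ for all $e \in \cH_1$ -- is the delicate quantitative step.
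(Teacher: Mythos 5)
Your plan fails at the partition itself, before any of the delicate peeling: defining the three parts by fixed size thresholds is incompatible with \ref{reordering-goodness4}. Since your block order \ref{partition-order2} places all of $\cH_2$ before all of $\cH_1$, an edge $e\in\cH_1$ has $\fwddeg(e)\geq |N_{\cH_2}(e)|$ no matter how you order within the blocks, and nothing in the hypothesis makes this small. Concretely, take $e$ of size $r_0$ and let each vertex of $e$ lie in roughly $t(n-1)/r_0$ edges of size $r_0+1$ (this saturates the codegree budget at those vertices but is otherwise unconstrained); these $\approx tn$ edges all land in your $\cH_2$, yet their total volume is only $O(tr_0^2/n)=o(\delta t)$, so they coexist happily with $(1-\delta)tn$ edges of size $(1\pm\delta)\sqrt n$. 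Then $\fwddeg(e)\approx tn\gg\gamma tn$. The ``parallel bound on $|N_{\cH_2}(e)|$'' you allude to does not exist: the only general bound is $|V(e)|\cdot\max_v d_{\cH_2}(v)\leq r_0\cdot t(n-1)/r_0\approx tn$. Your bound (a) for \ref{reordering-goodness3} is also numerically wrong: for $e\in\cH_2$ with $|V(e)|$ just below $(1-2\delta)\sqrt n$, Observation~\ref{obs:degree} gives $d_{\cH_3}(v)\leq t(n-1)/((1-2\delta)\sqrt n-1)$, so the product bound is about $t(n-1)\bigl(1+\tfrac{1}{(1-2\delta)\sqrt n-1}\bigr)\approx tn+t\sqrt n$, which overshoots $tn-2$; getting under the cap for near-threshold edges requires the size-weighted neighbourhood count $\sum_{f\in N(e)}(|V(f)|-1)\lesssim t|V(e)|n$ and a trade-off between the $\cH_3$- and $\cH_2$-contributions (this is exactly what Proposition~\ref{fwdnbr-prop} is for), not two separate naive bounds added together. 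Finally, the count of intersecting pairs in $\cH_2$ controls only average degree, not the degeneracy your peel needs: pairwise-intersecting subfamilies of $\cH_2$ of size $\Theta(\sqrt\delta\, tn)$ (e.g.\ a $t$-fold projective plane of order about $\delta^{1/4}\sqrt n$) are consistent with $\vol_\cH(\cH_2)=O(\delta t)$, so the min degree of a subfamily is not bounded by the average over $\cH_2$.

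The paper avoids all of this by not fixing the thresholds in advance. It applies Lemma~\ref{reordering-lemma} twice: first to $\cH$ with $\tau=1-\gamma$, which either already gives an ordering with all forward degrees at most $\gamma tn$, or produces a set $\cW_1$ and a cut edge $e_1^*$ so that everything after $e_1^*$ satisfies \ref{reordering-goodness4} by property (O1); then again, with $\tau=\sigma\ll\delta$, to the initial segment, producing $\cW_2$, a second cut edge, and \ref{reordering-goodness3}. The volume hypothesis (at least $(1-\delta)tn$ edges of size $(1\pm\delta)\sqrt n$, hence volume at least $(1-3\delta)t$ in that window) is used only at the end, to force $\cW\cap\cW_i\neq\varnothing$ and hence to show the extremally chosen cut sizes exceed $r_0$ and $(1-2\delta)\sqrt n$ respectively, which is what yields \ref{H1-edge-size} and \ref{W-min-edge-size}. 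In other words, the cut points are chosen where the forward-degree bounds hold by construction, and only then identified as lying near the sizes you tried to impose a priori; if you want to salvage your approach, you would essentially have to reprove Lemma~\ref{reordering-lemma} and let the thresholds float in the same way.
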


In Section~\ref{section:proof}, we use Lemma~\ref{lemma:good-ordering} in the proof of Theorem~\ref{main-thm-2}.  Property~\ref{W-max-edge-size} enables us to apply a result of Molloy and Reed~\cite[Theorem~10.5]{MR02} (see Theorem~\ref{local-sparsity-lemma}) on colouring ``locally sparse'' graphs to efficiently colour the edges of $\cW$ from their lists, and properties~\ref{W-volume2} and~\ref{H2-edge-size}, together with~\ref{reordering-goodness2}, enable us to colour the edges of $\cH_2$ with a disjoint set of colours.  Then, we can complete the colouring (of the ``large'' edges) greedily by~\ref{reordering-goodness1} and~\ref{partition-order}.

Having proved Theorem~\ref{main-thm-2}, we may assume in the proof of Theorem~\ref{main-thm} that there are at least $(1 - \delta)tn$ edges in $\cH$ of size $(1 \pm \delta)\sqrt n$ for $\delta \ll \eps$, in which case we apply Lemma~\ref{lemma:good-ordering-extremal} and colour the hypergraph $\cH_3$ obtained from Lemma~\ref{lemma:good-ordering-extremal} first.  The argument for colouring $\cH_3$ is quite delicate, and the next section, Section~\ref{fpp-extremal-section}, is entirely devoted to it (see Lemma~\ref{extremal-case-lemma}).  We use~\ref{reordering-goodness3},~\ref{reordering-goodness4}, and~\ref{partition-order2} to extend this colouring greedily to the  remaining ``large'' edges (i.e.~edges of size at least $r_1$), while~\ref{reordering-goodness4} and~\ref{H1-edge-size} ensure that only a small proportion of colours are used on edges of ``medium'' size (i.e.~large edges of size at most $r_0$).
As mentioned earlier, the arguments build on ideas introduced in~\cite{KKKMO2021}.

\subsection{Reordering edges}

Our main tool in the proofs of Lemmas~\ref{lemma:good-ordering} and~\ref{lemma:good-ordering-extremal} is the following lemma, which generalises~\cite[Lemma 6.2]{KKKMO2021}.

\begin{lemma}\label{reordering-lemma}
  Let $0 < 1/n_0 \ll 1/r_1 \ll \tau, 1/K$ where $\tau < 1$, $K \geq 1$, and $1 - \tau - 7\tau^{1/4} / K > 0$, and let $t,n \in \mathbb{N}$ with $n \geq n_0$.
  If $\cH$ is an $n$-vertex hypergraph with 
  $\Delta_2(\cH) \leq t$ and $\antirank(\cH) > r_1$,
  then there exists a linear ordering $\ordering$ of the edges of $\cH$ such that at least one of the following holds.
  \begin{enumerate}[(\ref*{reordering-lemma}:a), topsep = 6pt]
  \item Every $e\in\cH$ satisfies $\fwddeg(e) \leq t(1 - \tau)n$.\label{reordering-good}
  \item There is a set $\cW\subseteq \cH$ such that
    \begin{enumerate}[(W1)]
    \item\label{W-max-size} 
    $\rank(\cW) \leq (1 + 3\tau^{1/4} K^3) \antirank(\cW)$ and
    \item\label{W-volume} $\vol_{\cH}(\cW) \geq t\frac{(1 - \tau - 7\tau^{1/4}/K)^2}{1 + 3\tau^{1/4} K^3}$.
    \end{enumerate}
    Moreover, if $e^*$ is the last edge of $\cW$, then
    \begin{enumerate}[(O1)]
    \item for all $f\in \cH$ such that $e^*\ordering f$ and $f\neq e^*$, we have $\fwddeg(f) \leq t(1 - \tau)n$ and\label{ordering-goodness}
    \item for all $e,f\in \cH$ such that $f\ordering e \ordering e^*$, we have $|V(f)| \geq |V(e)|$\label{ordering-by-size}.
    \end{enumerate}
    \label{reordering-volume}
  \end{enumerate}
    
\end{lemma}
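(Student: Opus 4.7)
The plan is to start from the ordering by decreasing edge size and, if (a) fails for it, to locate a pivotal edge and extract $\cW$ via a size-bucketing argument that exploits the codegree bound $\vol_\cH(\cH) \le t$.

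First I would fix any linear ordering $\ordering$ of $\cH$ in which $|V(e)| \ge |V(f)|$ whenever $e \ordering f$, so that condition~\ref{ordering-by-size} is automatic for any choice of $e^*$. If every $e \in \cH$ already satisfies $\fwddeg_\cH(e) \le t(1-\tau)n$, then conclusion~\ref{reordering-good} holds and we are done. Otherwise, let $e^*$ be the latest edge of $\ordering$ with $\fwddeg_\cH(e^*) > t(1-\tau)n$. By the maximality of $e^*$, every $f \neq e^*$ with $e^* \ordering f$ satisfies $\fwddeg_\cH(f) \le t(1-\tau)n$, so~\ref{ordering-goodness} will hold provided we can locate $\cW$ of which $e^*$ (or some edge not later than $e^*$ in $\ordering$) is the latest element.

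To produce such a $\cW$, I would partition the edges of $\cH$ of size at most $|V(e^*)|$ into geometric size-buckets: for $i \ge 0$, let $B_i$ be the set of edges whose size lies in $\bigl(|V(e^*)|/(1+3\tau^{1/4}K^3)^{i+1},\ |V(e^*)|/(1+3\tau^{1/4}K^3)^i\bigr]$, so that $e^* \in B_0$ and each $B_i$ automatically satisfies the size-ratio bound~\ref{W-max-size}. Since the codegree of $\cH$ is at most $t$, we have $\sum_i \vol_\cH(B_i) \le \vol_\cH(\cH) \le t$. The key input is that $e^*$ has more than $t(1-\tau)n$ forward neighbours, all of which lie in $\bigcup_{i \ge 0} B_i$ by the decreasing-size convention. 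A double-counting argument on these forward neighbours — using that the number of them in $B_i$ sharing a vertex with $e^*$ is constrained by applying the codegree bound to pairs within $V(e^*)$, while each edge in $B_i$ contributes volume at most $\binom{|V(e^*)|/(1+3\tau^{1/4}K^3)^i}{2}/\binom{n}{2}$ — should show that some $B_{i^*}$ has volume at least $t(1-\tau-7\tau^{1/4}/K)^2/(1+3\tau^{1/4}K^3)$, verifying~\ref{W-volume} for $\cW := B_{i^*}$. If $i^* > 0$, the latest edge of $\cW$ is some edge $f^*$ with $|V(f^*)| < |V(e^*)|$, hence $f^*$ comes after $e^*$ in $\ordering$; but then $\fwddeg_\cH$ is still bounded by $t(1-\tau)n$ strictly beyond $f^*$ since all such edges are strictly after $e^*$, so~\ref{ordering-goodness} continues to hold with $e^*$ replaced by $f^*$, and~\ref{ordering-by-size} is preserved throughout.

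The main obstacle is the sharp quantitative balancing behind the volume extraction: the appearance of the exponent $\tau^{1/4}$ in the conclusion and of the squared factor $(1-\tau-7\tau^{1/4}/K)^2$ indicates that the bucket ratio $1+3\tau^{1/4}K^3$ and the volume threshold are jointly tuned by solving a one-parameter optimisation (likely of Cauchy--Schwarz type) trading off bucket width against the number of buckets the forward neighbours of $e^*$ can occupy. Making this trade-off precise — while also ensuring that the chosen $\cW$ is compatible with the ordering constraints~\ref{ordering-goodness} and~\ref{ordering-by-size} — is the main technical step, and is where the proof generalises the corresponding $t=1$ argument (\cite[Lemma~6.2]{KKKMO2021}) to arbitrary $t$.
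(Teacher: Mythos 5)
There is a genuine gap at the heart of your volume-extraction step: condition~\ref{W-volume} cannot be certified from the forward neighbourhood of $e^*$ alone, which is the only quantitative input your bucket argument uses. Since every edge has size at least $r_1$ and the codegree is at most $t$, the forward neighbourhood of $e^*$ has only $O(tn)$ edges (this is exactly the content of the counting in Proposition~\ref{fwdnbr-prop}), and if $r\coloneqq|V(e^*)|$ then even putting \emph{all} of these edges into one bucket of size close to $r$ yields volume only about $tn\cdot\binom{r}{2}/\binom{n}{2}\approx tr^2/n$, which is far below the required $\approx t$ whenever $r\ll\sqrt n$ (and $r$ can be as small as $r_1$). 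Relatedly, the pigeonhole you invoke points the wrong way: $\sum_i\vol_\cH(B_i)\le t$ is an upper bound on the total and cannot force a single bucket to carry volume close to $t$; nothing in the hypotheses prevents the volume from being spread over many size scales, and the information you cite (forward degree of $e^*$, codegrees inside $V(e^*)$) does not rule this out. In short, $\cW$ must contain roughly $tn^2/r^2\gg tn$ edges, so it necessarily consists mostly of edges \emph{not} adjacent to $e^*$, and your argument never produces such edges.

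The paper closes exactly this gap with a second-neighbourhood double count, and this forces a different (and essential) choice of ordering. One first chooses $\ordering$ and $e^*$ satisfying~\ref{ordering-goodness} and~\ref{ordering-by-size} so as to minimise $e(\cH^{\ordering e^*})$; this minimality gives the key Claim that \emph{every} edge $e\ordering e^*$ (not just $e^*$) satisfies $|N(e)\cap\cH^{\ordering e^*}|>(1-\tau)tn$, since otherwise such an $e$ could be moved to just after $e^*$. Your static choice (size-monotone order, $e^*$ the last violator) does not provide this, and it is precisely what is needed next: taking $X'$ to be the forward neighbours of $e^*$ of size close to $r$ with small overlap with $e^*$, one shows each $e\in X'$ has almost $tn$ neighbours inside $\cW\setminus N(e^*)$, while each $f\in\cW\setminus N(e^*)$ has at most about $tr^2$ neighbours in $X'$ (as these lie in $N(f)\cap N(e^*)$). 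Double counting these incidences yields $|\cW|\gtrsim tn^2/r^2$ and hence~\ref{W-volume}. So the missing ingredients are (i) the reordering/minimality argument guaranteeing large forward degree for all edges preceding $e^*$, and (ii) the passage to the second neighbourhood of $e^*$; without them the bucket-plus-pigeonhole scheme cannot reach the stated volume bound.
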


Note that if (\ref{reordering-lemma}:a) of Lemma~\ref{reordering-lemma} holds, then one can list-colour  $\cH$ greedily with $t(1-\tau)n+1$ colours.
To prove Lemma~\ref{reordering-lemma}, we proceed roughly as follows: If we cannot find an ordering which satisfies (\ref{reordering-lemma}:a), then we consider a suitable ordering satisfying~\ref{ordering-goodness} and~\ref{ordering-by-size} for some $e^* \in \cH$ (a size-monotone ordering with the largest edge coming first is one example of an ordering satisfying \ref{ordering-goodness} and \ref{ordering-by-size} with the last edge $e^*$). 
Let $\ordering$ be one of such orderings that minimises $e(\cH^{\ordering e^*})$. 
If $\ordering$ violates (\ref{reordering-lemma}:a), one can then show that a set $\cW$ of edges which precede $e^*$ and are only a little larger than $e^*$ satisfies~\ref{W-max-size} and~\ref{W-volume}. The former follows by definition of $\cW$ and the latter is proven via a double counting argument which shows $\rm (i)$ that $\cW$ contains essentially all edges $f$ which (in the line graph of $\cH$) lie in the first or second neighbourhood of $e^*$ and $\rm (ii)$ there are not too many other edges.

To formalize this argument, we first need several ingredients.

\begin{obs}\label{obs:degree}
Let $r \geq 2$, and let $\cH$ be a hypergraph with $\Delta_2(\cH) \leq t$.
Then every vertex is contained in at most $t(n-1)/(r-1)$ edges of size at least $r$ in $\cH$.
\end{obs}
\begin{proof}
Let $\cH_r \coloneqq \{ e \in \cH : |e| \ge r \}$. Then for every $v \in V(\cH)$,
\begin{equation*}
    (r-1)d_{\cH_r}(v) \leq \sum_{e \colon v \in e \in \cH_r}(|V(e)|-1) = \sum_{w \colon v \ne w} d_{\cH_r}(v,w) \leq t(n-1),
\end{equation*}
where for the last inequality we used that $\cH$ (and hence $\cH_r$) has maximum codegree at most $t$. 
Thus, $d_{\cH_r}(v) \leq t(n-1)/(r-1)$, as desired.
\end{proof}

We remark that the lemmas that assume large $\antirank(\cH)$ do not necessarily require $\Delta(\cH) \leq (1 - o(1))tn$ as in Theorem~\ref{main-thm}, since $\Delta(\cH) \leq t(n - 1)/ (\antirank(\cH)-1)$ by Observation~\ref{obs:degree}.

The following observation bounds the number of edges intersecting a given edge in a significant proportion of its vertices.

\begin{obs}\label{obs:intersect}
Let $t \in \mathbb{N}$ and let $\cH$ be a hypergraph with $\Delta_2(\cH) \leq t$.
For every $e \in \cH$ and $\alpha \in (0,1)$ such that $\alpha|V(e)| \geq 2$, there are at most $2t/\alpha^2$ edges $f \in \cH$ such that $|V(e) \cap V(f)| \geq \alpha |V(e)|$.
\end{obs}
\begin{proof}
Fix $e \in \cH$. Let $N_\alpha$ be the number of edges $f \in \cH$ such that $|V(e) \cap V(f)| \geq \alpha |V(e)|$. Since every pair in $e$ is covered by at most $t$ edges in $\cH$, we have 
\begin{equation*}
    t \binom{|V(e)|}{2} \geq \sum_{f \in \cH} \binom{|V(e) \cap V(f)|}{2} \geq N_\alpha \binom{\alpha|V(e)|}{2}.
\end{equation*}
 This easily implies the desired bound $N_\alpha \leq 2t/\alpha^2$, since $\alpha |V(e)| \geq 2$.\COMMENT{Thus,
\begin{equation*}
    N_\alpha \leq t \binom{|V(e)|}{2} \binom{\alpha|V(e)|}{2}^{-1} \leq \frac{t|V(e)|(|V(e)|-1)}{\alpha|V(e)|(\alpha|V(e)|-1)} \leq \frac{t |V(e)|^2}{\alpha |V(e)| \alpha|V(e)|/2} = \frac{2t}{\alpha^2},
\end{equation*}
as desired.}
\end{proof}

\begin{proposition}\label{fwdnbr-prop}
  Let $t,n \in \mathbb{N}$ and $\alpha_1, \alpha_2, \tau \geq 0$.  Let $\cH$ be an $n$-vertex hypergraph with 
  $\Delta_2(\cH) \leq t$ and $\antirank(\cH) \geq 2 (1 + \alpha_2)^2$.
  Let $e\in \cH$, let $r\coloneqq |V(e)|$, let $m_1 \coloneqq |\{f \in N(e) : |V(f)| \geq (1 + \alpha_1)r\}|$, and let $m_2 \coloneqq |\{f \in N(e) : (1 + \alpha_1)r > |V(f)| \geq r / (1 + \alpha_2)\}|$.  Then
  \begin{equation}\label{fwddeg-inequality}\tag{i}
    (1 + \alpha_1)m_1 + \frac{m_2}{1 + \alpha_2} \leq t n \left ( 1 + \frac{1 + \alpha_2}{r - 1 - \alpha_2} \right ).
  \end{equation}
  Moreover, if $m_1 + m_2 \geq t(1 - \tau)n$ and $\alpha_1 > 0$, then
  \begin{equation}\label{fwddeg-far-bound}\tag{ii}
    m_1 \leq \left(\tau + \frac{1 + \alpha_2 + \alpha_2 r}{r - 1 - \alpha_2}\right)\frac{tn}{\alpha_1}.
  \end{equation}
\end{proposition}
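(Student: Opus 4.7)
The plan is to base both bounds on a single codegree identity obtained by double-counting pairs $(v,f)$ with $v \in V(e) \cap V(f)$, weighted by $|V(f)|-1$. For each vertex $v$, the codegree hypothesis gives $\sum_{f \ni v}(|V(f)|-1) = \sum_{u \neq v} d_\cH(u,v) \leq t(n-1)$. Summing over $v \in V(e)$, writing $x_f \coloneqq |V(e) \cap V(f)|$, and separating out the $f = e$ contribution of $r(r-1)$ yields
\begin{equation*}
\sum_{f \in N(e)} x_f\bigl(|V(f)|-1\bigr) \;\leq\; r\bigl(t(n-1)-(r-1)\bigr) \;\leq\; rtn.
\end{equation*}

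For (i), I would introduce the weight function $w(s) \coloneqq 1+\alpha_1$ for $s \geq (1+\alpha_1)r$, $w(s) \coloneqq 1/(1+\alpha_2)$ for $r/(1+\alpha_2) \leq s < (1+\alpha_1)r$, and $w(s) \coloneqq 0$ otherwise, so that the left-hand side of (i) equals $\sum_{f \in N(e)} w(|V(f)|)$. The key pointwise bound $w(s) \leq (s-1)/(r-1-\alpha_2)$ reduces, after clearing denominators, to $(1+\alpha_1)(1+\alpha_2) \geq 1$ in the upper range and to the exact identity $r-1-\alpha_2 = (1+\alpha_2)\bigl(r/(1+\alpha_2)-1\bigr)$ in the lower range. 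Combined with $x_f \geq 1$ and the displayed codegree bound, this yields
\begin{equation*}
(1+\alpha_1)m_1 + \frac{m_2}{1+\alpha_2} \;\leq\; \frac{rtn}{r-1-\alpha_2} \;=\; tn\left(1 + \frac{1+\alpha_2}{r-1-\alpha_2}\right),
\end{equation*}
which is (i).

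For (ii), I would rewrite the left-hand side of (i) as $m_1\bigl((1+\alpha_1)-1/(1+\alpha_2)\bigr) + (m_1+m_2)/(1+\alpha_2)$ and substitute the hypothesis $m_1 + m_2 \geq t(1-\tau)n$ into the second summand. The coefficient of $m_1$ is $(\alpha_1+\alpha_2+\alpha_1\alpha_2)/(1+\alpha_2) \geq \alpha_1/(1+\alpha_2)$, so after rearranging and multiplying through by $1+\alpha_2$ this produces
\begin{equation*}
\alpha_1 m_1 \;\leq\; tn\left(\tau + \alpha_2 + \frac{(1+\alpha_2)^2}{r-1-\alpha_2}\right).
\end{equation*}
The exact identity $\alpha_2(r-1-\alpha_2)+(1+\alpha_2)^2 = 1+\alpha_2+\alpha_2 r$ collapses the parenthesised expression to $\tau + (1+\alpha_2+\alpha_2 r)/(r-1-\alpha_2)$, matching (ii) after dividing by $\alpha_1$.

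The main obstacle is essentially algebraic: choosing the weight $w$ so that the pointwise bound is tight enough in both size classes, and performing the elimination of $m_2$ in (ii) so that the final coefficient on $m_1$ collapses to exactly $\alpha_1$ (rather than the slightly worse $\alpha_1(1+\alpha_2)$ one might get by being less careful). The size hypothesis $|V(e)| \geq 2(1+\alpha_2)^2$ enters only to keep $r-1-\alpha_2$ and $r/(1+\alpha_2)-1$ comfortably positive so that the denominators above are well defined.
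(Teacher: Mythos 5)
Your argument is correct, and it reaches both \eqref{fwddeg-inequality} and \eqref{fwddeg-far-bound} by a route that genuinely differs from the paper's. The paper proves the core counting bound $\sum_{f \in N(e),\,|V(f)| \geq r/(1+\alpha_2)}(|V(f)|-1) \leq trn$ by splitting $N(e)$ into neighbours with $V(f) \subseteq V(e)$ and the rest: the contained ones are few (at most $2t(1+\alpha_2)^2$, via the observation bounding edges meeting $e$ in many vertices), and here the hypothesis $r \geq 2(1+\alpha_2)^2$ is used essentially to absorb their contribution into $tr^2$, while the remaining neighbours are handled by counting pairs $(u,v)$ with $u \in V(e)$, $v \notin V(e)$. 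Your single double count over all pairs $(v,u)$ with $v \in V(e)$ sidesteps this case split: since each $f \in N(e)$ is weighted by $|V(e)\cap V(f)| \geq 1$, you get the stronger inequality $\sum_{f \in N(e)}(|V(f)|-1) \leq rtn$ in one stroke, with no appeal to the intersection-counting observation and with the size hypothesis needed only to keep $r-1-\alpha_2$ positive — so your proof in fact works under the weaker assumption $r > 1+\alpha_2$. Your weight-function formulation of \eqref{fwddeg-inequality} is just a repackaging of the paper's lower bounds $|V(f)|-1 \geq (1+\alpha_1)r-1$ and $|V(f)|-1 \geq r/(1+\alpha_2)-1$ on the two size classes, and your derivation of \eqref{fwddeg-far-bound} eliminates $m_2$ by substituting $m_1+m_2 \geq t(1-\tau)n$ directly (using that $m_1 \ge 0$ to relax the coefficient $(1+\alpha_1)-\tfrac{1}{1+\alpha_2}$ down to $\tfrac{\alpha_1}{1+\alpha_2}$), whereas the paper instead bounds the residual $\tfrac{\alpha_2}{1+\alpha_2}m_2$ via its separate estimate $m_1+m_2 \leq \tfrac{(1+\alpha_2)trn}{r-1-\alpha_2}$; both collapses yield the identical final expression $tn\bigl(\tau + \tfrac{1+\alpha_2+\alpha_2 r}{r-1-\alpha_2}\bigr)$. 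In short, your approach buys a cleaner and slightly more general proof of \eqref{fwddeg-inequality}; the paper's buys nothing extra here beyond reusing machinery (Observations 3.4 and 3.5) that it needs elsewhere anyway.
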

\begin{proof}
First, we show~\eqref{fwddeg-inequality}.
Let $N_1 \coloneqq \{ f \in N(e) : V(f) \subseteq V(e) ,\ |V(f)| \geq r/(1+\alpha_2) \}$ and $N_2 \coloneqq \{ f \in N(e) : V(f) \not\subseteq V(e) ,\ |V(f)| \geq r/(1+\alpha_2) \}$. \COMMENT{Since $|V(f) \cap V(e)| \geq |V(e)|/(1+\alpha_2)$ for every $f \in N_1$,}By Observation~\ref{obs:intersect}, $|N_1| \leq (1+\alpha_2)^2 2t$. Thus, since $r \geq 2(1+\alpha_2)^2$, and $|V(f)| \leq |V(e)|=r$ for all $f \in N_1$, we have
\begin{align}\label{eqn:n1}
    \sum_{f \in N_1} (|V(f)|-1) \leq (r-1) (1+\alpha_2)^2 2t \leq tr^2.
\end{align}
Also note that 
\begin{align}\label{eqn:n2}
 \sum_{f \in N_2} (|V(f)|-1) \leq \sum_{f \in N(e)}|V(f) \cap V(e)||V(f) \setminus V(e)| = \sum_{u \in V(e),\:v \notin V(e)} d_{\cH}(u,v) \leq tr(n-r).
\end{align}
Combining~\eqref{eqn:n1} and~\eqref{eqn:n2}, we have
\begin{align}
\label{eq:newn3}
    \sum_{f \in N(e),|V(f)| \geq r/(1+\alpha_2)} (|V(f)|-1) \leq trn,
\end{align}
and thus
\begin{equation*}
    \sum_{f \in N(e) : |V(f)| \geq (1 + \alpha_1)r} \left((1 + \alpha_1)r - 1\right) + \sum_{f \in N(e) : (1 + \alpha_1)r > |V(f)| \geq r/(1 + \alpha_2)} \left( \frac{r}{1+\alpha_2} - 1\right) \leq trn.
\end{equation*}
Dividing both sides of this inequality by $r$ and rearranging terms, we obtain
\begin{equation}\label{fwddeg-inequality1}
    (1 + \alpha_1)m_1 + \frac{m_2}{1 + \alpha_2} \leq tn + \frac{m_1 + m_2}{r}.
\end{equation}
By \eqref{eq:newn3}, we also have
\begin{equation}\label{fwddeg-inequality2}
    m_1 + m_2 \leq \frac{trn}{r/(1 + \alpha_2) - 1} = \frac{(1 + \alpha_2)trn}{r - 1 - \alpha_2}.
\end{equation}
Hence, we obtain~\eqref{fwddeg-inequality} by replacing $m_1 + m_2$ with the bound in~\eqref{fwddeg-inequality2} in the right side of~\eqref{fwddeg-inequality1}.

Now we will show~\eqref{fwddeg-far-bound}. 
Combining the inequality $m_1 + m_2 \geq t(1 - \tau)n$  with~\eqref{fwddeg-inequality}\COMMENT{The left side of~\eqref{fwddeg-inequality} is $m_1 + m_2 + \alpha_1 m_1 + (1/(1 + \alpha_2) - 1)m_2$, so if $m_1 + m_2 \geq t(1 - \tau)n$, then $t(1 - \tau)n + \alpha_1 m_1 + (1/(1 + \alpha_2) - 1)m_2$ is at most the right side of~\eqref{fwddeg-inequality}.}, we obtain\COMMENT{From \eqref{fwddeg-inequality2} below we only used $m_2 \le \frac{(1 + \alpha_2)trn}{r - 1 - \alpha_2}$, so $(1-\frac{1}{1+\alpha_2})m_2 \leq \alpha_2  \frac{t r n}{r - 1 -\alpha_2}$} 
\begin{align*}
    \alpha_1 m_1  \leq t\tau n + \frac{tn(1+\alpha_2)}{r-1-\alpha_2} + \left (1 - \frac{1}{1 + \alpha_2} \right )  m_2
    & \overset{\eqref{fwddeg-inequality2}}{\leq} t\tau n + \frac{tn(1+\alpha_2)}{r-1-\alpha_2} + \alpha_2 \frac{trn}{r - 1 - \alpha_2}\\
    & = tn \left ( \tau + \frac{1+\alpha_2 + \alpha_2 r}{r-1-\alpha_2} \right ),
\end{align*}
as desired.
\end{proof}

Now we are ready to prove Lemma~\ref{reordering-lemma}.

\begin{proof}[Proof of Lemma~\ref{reordering-lemma}]
  Let $\alpha$ satisfy $1/r_1 \ll \alpha \ll \tau, 1/K$.
  Let $\ordering$ be an ordering of the edges in $\cH$ satisfying both~\ref{ordering-goodness} and~\ref{ordering-by-size} for some $e^*\in\cH$ such that $e(\cH^{\ordering e^*})$ is minimum. Such an ordering exists since every size-monotone ordering $\ordering$ of the edges in $\cH$, where $f \ordering e$ if $|V(f)| > |V(e)|$, satisfies~\ref{ordering-goodness} and~\ref{ordering-by-size} for $e^*$, where $e^*$ is the last edge in the ordering.
  
  We may assume that $e(\cH^{\ordering e^*}) > 1$, otherwise $\ordering$ satisfies~\ref{reordering-good}, as desired. First we prove the following claim.
  \begin{claim}\label{claim:numorder}
    For every $e \in \cH^{\ordering e^*}$, 
    \begin{equation*}
        |N(e) \cap \cH^{\ordering e^*}| > (1 - \tau) tn.
    \end{equation*}
    In particular, $\fwddeg(e^*) > (1 - \tau)tn$.
  \end{claim}
  \begin{claimproof}
  If $\fwddeg(e^*) = |N(e^*) \cap \cH^{\ordering e^*}| \leq (1 - \tau)tn$, then the predecessor of $e^*$ also satisfies~\ref{ordering-goodness} and~\ref{ordering-by-size}, contradicting the choice of $e(\cH^{\ordering e^*})$ to be minimum. If $|N(e) \cap \cH^{\ordering e^*}| \leq (1 - \tau) tn$ for some $e \in \cH^{\ordering e^*} \setminus \{ e^* \}$, then we can move $e$ to be the successor of $e^*$ in the ordering $\ordering$, also contradicting the choice of $e(\cH^{\ordering e^*})$ to be minimum.
  \end{claimproof}
  
  Let $r \coloneqq |V(e^*)|$, and let $\cW \coloneqq \{f \ordering e^* : |V(f)| < (1 + 3 \tau^{1/4} K^3)r\}$, so that~\ref{W-max-size} is satisfied. 
  Note that $|V(f)| \geq r = |V(e^*)|$ for any $f \in \cW$, since $e^*$ satisfies~\ref{ordering-by-size}. 
  
  For the rest of the proof, we aim to show~\ref{W-volume}, which will ensure that $\cW$ satisfies~\ref{reordering-volume}.
  
  To that end, let $X \coloneqq \{e \in \fwdnbr(e^*) : |V(e)| < (1 + K\sqrt\tau)r\}$ and $X' \subseteq X$ be the collection of edges $e \in X$ with $|V(e) \cap V(e^*)| \leq \alpha r$.

  In order to bound $|\cW|$ from below, we first prove the following bound on $|X'|$.
  \begin{claim}\label{nbrs-far-ahead-of-e^*-bound}
  We have
    \begin{equation*}
        |X'| \geq \left(1 - \tau - \frac{2\sqrt\tau}{K}\right)tn.
    \end{equation*}
  \end{claim}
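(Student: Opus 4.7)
The plan is to write $|X'| = |\fwdnbr(e^*)| - |\fwdnbr(e^*) \setminus X| - |X \setminus X'|$ and bound each of the two defect terms separately. By Claim~\ref{claim:numorder}, $|\fwdnbr(e^*)| > (1 - \tau)tn$. Property~\ref{ordering-by-size} forces every $f$ with $f \ordering e^*$ to satisfy $|V(f)| \geq r$, so an edge $f \in \fwdnbr(e^*)$ fails to lie in $X$ \emph{only} if $|V(f)| \geq (1 + K\sqrt{\tau})r$, and an edge $f \in X$ fails to lie in $X'$ \emph{only} if $|V(f) \cap V(e^*)| > \alpha r$. These two categories of ``bad'' edges will be handled using Proposition~\ref{fwdnbr-prop}\eqref{fwddeg-far-bound} and Observation~\ref{obs:intersect} respectively.

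To bound the number of too-large forward neighbours, I apply Proposition~\ref{fwdnbr-prop}\eqref{fwddeg-far-bound} to the edge $e^*$ with $\alpha_1 \coloneqq K\sqrt{\tau}$ and $\alpha_2 \coloneqq 0$. In that application, $m_1 + m_2$ counts \emph{all} neighbours of $e^*$ in $\cH$ (not only the forward ones) of size at least $r/(1 + \alpha_2) = r$, so by the previous observation $m_1 + m_2 \geq |\fwdnbr(e^*)| > (1 - \tau)tn$, verifying the hypothesis. The conclusion then yields $m_1 \leq (\tau + 1/(r - 1)) \cdot tn / (K\sqrt{\tau})$; since $1/r \leq 1/r_1 \ll \tau$, the term $1/(r-1)$ is negligible compared to $\tau$, so $m_1 \leq (1 + o(1)) \sqrt{\tau}\, tn / K$, which is an upper bound on $|\fwdnbr(e^*) \setminus X|$.

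For the second type of defect, Observation~\ref{obs:intersect} applied to $e^*$ with parameter $\alpha$ (noting that $\alpha r \geq \alpha r_1 \geq 2$ is guaranteed by the hierarchy $1/r_1 \ll \alpha$) gives immediately $|X \setminus X'| \leq 2t/\alpha^2$, a constant depending only on $\alpha$ and $t$.

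Combining these estimates yields $|X'| \geq (1 - \tau)tn - (1 + o(1))\sqrt{\tau}\, tn / K - 2t/\alpha^2$, and the claimed bound $(1 - \tau - 2\sqrt{\tau}/K)tn$ then drops out from the hierarchy $1/n_0 \ll 1/r_1 \ll \alpha \ll \tau, 1/K$: the $o(\sqrt{\tau}\, tn/K)$ error and the absolute constant $2t/\alpha^2$ are both comfortably absorbed by the $\sqrt{\tau}\, tn/K$ of slack between $\sqrt{\tau}/K$ and $2\sqrt{\tau}/K$. I do not anticipate a real obstacle here; the only conceptual step worth flagging is the choice of parameters ($\alpha_1 = K\sqrt{\tau}$, $\alpha_2 = 0$) in Proposition~\ref{fwdnbr-prop}\eqref{fwddeg-far-bound}, which is made precisely so that the hypothesis $m_1 + m_2 \geq (1 - \tau)tn$ is delivered exactly by Claim~\ref{claim:numorder} together with~\ref{ordering-by-size}.
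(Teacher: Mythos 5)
Your proposal is correct and is essentially the paper's own argument: the same decomposition of $|X'|$ into the two defect terms, with Claim~\ref{claim:numorder} giving $\fwddeg(e^*) > (1-\tau)tn$, Observation~\ref{obs:intersect} bounding $|X \setminus X'| \leq 2t/\alpha^2$, and Proposition~\ref{fwdnbr-prop}\eqref{fwddeg-far-bound} applied with $\alpha_1 = K\sqrt\tau$, $\alpha_2 = 0$ bounding $|\fwdnbr(e^*)\setminus X|$. The only difference is presentational: the paper makes the final absorption explicit by bounding the two error terms by $\tfrac{3\sqrt\tau}{2K}tn$ and $\tfrac{\sqrt\tau}{2K}tn$ respectively, whereas you invoke the hierarchy informally, which is fine.
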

    \begin{claimproof}
      By Observation~\ref{obs:intersect}, we have 
      \begin{equation*}
          |X'| \geq |X| - \frac{2t}{\alpha^2} \geq |X| - \frac{\sqrt{\tau}}{2K} tn
      \end{equation*}
      and by Claim~\ref{claim:numorder} (where $e^*$ plays the role of $e$), we have 
      \begin{equation*}
          |X| \geq \left (1 - \tau \right )tn - |\fwdnbr(e^*)\setminus X|.
      \end{equation*}
    Moreover, by~\ref{ordering-by-size}, edges in $\fwdnbr(e^*)$ have size at least $r$, so $\fwdnbr(e^*) \setminus X  = \{e \in \fwdnbr(e^*) : |V(e)| \ge (1 + K\sqrt\tau)r\}$. Thus, we may apply Proposition~\ref{fwdnbr-prop}\eqref{fwddeg-far-bound} with $K \sqrt \tau$ and $0$ playing the roles of $\alpha_1$ and $\alpha_2$, respectively,\COMMENT{and with $m_1 = |\fwdnbr(e^*)\setminus X|$ and $m_2 = |X|$} to obtain\COMMENT{we used $\tau \gg 1/r_1 \ge 1/r$ below}
    \begin{equation*}
        |\fwdnbr(e^*)\setminus X| \leq \left(\tau + \frac{1}{r-1}\right)\frac{tn}{K\sqrt \tau} \leq \frac{3 \sqrt{\tau}}{2K}tn.
    \end{equation*}
    By combining the three inequalities above, the claim follows.
    \end{claimproof}
    
 In the next claim, we provide a lower bound on the number of neighbours an edge of $X'$ has in $\cW \setminus N(e^*)$.
  \begin{claim}\label{X-nbrs-in-W-bound}
  For every $e\in X'$,
  \begin{equation*}
    |N(e)\cap (\cW\setminus N(e^*))| \geq \left (1 - \tau - \frac{7\tau^{1/4}}{K}\right)tn - (1 + K \sqrt\tau)tr^2.
  \end{equation*}
  \end{claim}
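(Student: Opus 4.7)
The plan is to start from the lower bound $|N(e) \cap \cH^{\ordering e^*}| > (1 - \tau)tn$ supplied by Claim~\ref{claim:numorder} (which applies since $e \in X' \subseteq \cH^{\ordering e^*}$) and then to subtract two ``bad'' contributions: edges of $\cH^{\ordering e^*}$ lying outside $\cW$, and edges intersecting $e^*$. By~\ref{ordering-by-size} every edge preceding $e^*$ has size at least $r = |V(e^*)|$, so, writing $\cH_{\geq r}$ for the subhypergraph of edges of $\cH$ of size at least $r$, I have
\begin{equation*}
|N(e) \cap (\cW \setminus N(e^*))| \geq |N(e) \cap \cH^{\ordering e^*}| - |N(e) \cap (\cH^{\ordering e^*} \setminus \cW)| - |N(e) \cap N(e^*) \cap \cH_{\geq r}|,
\end{equation*}
and $\cH^{\ordering e^*} \setminus \cW$ consists precisely of edges $f \ordering e^*$ with $|V(f)| \geq (1 + 3\tau^{1/4}K^3)r$.

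To bound the second subtracted term, I will apply Proposition~\ref{fwdnbr-prop}\eqref{fwddeg-far-bound} to $e$ itself (rather than to $e^*$), with the choices $\alpha_1 \coloneqq \tau^{1/4}K^3$ and $\alpha_2 \coloneqq K\sqrt\tau$. Since $|V(e)| < (1 + K\sqrt\tau)r$, a direct expansion gives $(1 + \alpha_1)|V(e)| \leq (1 + 3\tau^{1/4}K^3)r$ and $|V(e)|/(1 + \alpha_2) \leq r$. The latter, combined with $|V(f)| \geq r$ for every $f \in \cH^{\ordering e^*}$, ensures that each $f \in N(e) \cap \cH^{\ordering e^*}$ contributes to $m_1 + m_2$ in the notation of Proposition~\ref{fwdnbr-prop}, so Claim~\ref{claim:numorder} yields $m_1 + m_2 \geq (1 - \tau)tn$. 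Using $|V(e)| \geq r \gg 1$ to reduce the fraction $(1 + \alpha_2 + \alpha_2|V(e)|)/(|V(e)| - 1 - \alpha_2)$ to at most $2\alpha_2 = 2K\sqrt\tau$, estimate~\eqref{fwddeg-far-bound} delivers $|N(e) \cap (\cH^{\ordering e^*} \setminus \cW)| \leq m_1 \leq (\tau + 2K\sqrt\tau)tn/(\tau^{1/4}K^3) = O(\tau^{1/4}/K^2)\cdot tn$.

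For the third subtracted term, I use that $e \in X'$ gives $|V(e) \cap V(e^*)| \leq \alpha r$. For each $f \in N(e) \cap N(e^*) \cap \cH_{\geq r}$, either (a) $V(f)$ meets $V(e) \cap V(e^*)$, in which case I charge $f$ to a vertex in that intersection, contributing at most $|V(e) \cap V(e^*)| \cdot 2tn/r \leq 2\alpha tn$ by Observation~\ref{obs:degree} applied to $\cH_{\geq r}$; or (b) $V(f)$ contains a pair $\{u, v\}$ with $u \in V(e) \setminus V(e^*)$ and $v \in V(e^*) \setminus V(e)$, so the codegree bound contributes at most $t|V(e)||V(e^*)| \leq (1 + K\sqrt\tau)tr^2$. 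Substituting the three estimates into the displayed inequality and using the hierarchy $\alpha \ll \tau,\ 1/K$ (which we may sharpen to $\alpha \leq \tau^{1/4}/K$), we obtain the target bound. The main obstacle is the $2\alpha tn$ estimate in case (a): applying Observation~\ref{obs:degree} directly to $\cH$ gives only $d_\cH(u) \leq 2tn/r_1$, which is too weak; the crux is that the $\cH^{\ordering e^*}$ restriction forces us into $\cH_{\geq r}$, where Observation~\ref{obs:degree} saves a factor $r/r_1$ that exactly cancels the $\alpha r$ bound on $|V(e) \cap V(e^*)|$.
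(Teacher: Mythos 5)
Your overall route is the same as the paper's: decompose $|N(e)\cap(\cW\setminus N(e^*))|$ by subtracting the edges of $\cH^{\ordering e^*}$ outside $\cW$ and the common neighbours with $e^*$, bound the first subtracted term via Proposition~\ref{fwdnbr-prop}\eqref{fwddeg-far-bound} applied to $e$ with $\alpha_2 = K\sqrt\tau$, and split the second into edges meeting $V(e)\cap V(e^*)$ (handled by Observation~\ref{obs:degree} and $|V(e)\cap V(e^*)|\leq \alpha r$) and edges meeting both $V(e)\setminus V(e^*)$ and $V(e^*)\setminus V(e)$ (handled by the codegree bound). Those parts, including the use of Claim~\ref{claim:numorder} to get $m_1+m_2\geq(1-\tau)tn$ and the reduction of the fraction in \eqref{fwddeg-far-bound} using $|V(e)|\geq r_1$, are fine.

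However, your choice $\alpha_1 \coloneqq \tau^{1/4}K^3$ does not work. To conclude $|(N(e)\cap\cH^{\ordering e^*})\setminus\cW|\leq m_1$ you need every $f\in\cH^{\ordering e^*}\setminus\cW$ (which by~\ref{ordering-by-size} and the definition of $\cW$ satisfies $|V(f)|\geq(1+3\tau^{1/4}K^3)r$) to have $|V(f)|\geq(1+\alpha_1)|V(e)|$, and since $|V(e)|$ can be as large as $(1+K\sqrt\tau)r$, your ``direct expansion'' amounts to the inequality $K\sqrt\tau+\tau^{3/4}K^4\leq 2\tau^{1/4}K^3$, whose second term requires $K\sqrt\tau\leq 1$. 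This fails exactly in the regime the lemma is used in: for instance, Lemma~\ref{lemma:good-ordering-extremal} applies Lemma~\ref{reordering-lemma} with $\tau=1-\gamma$ and $K=\gamma^{-2}$, where $K\sqrt\tau\gg 1$, so an edge $f$ of size exactly $(1+3\tau^{1/4}K^3)r$ need not be counted in your $m_1$ and the bound on the second term collapses. The repair is simply to take $\alpha_1\coloneqq K^2\tau^{1/4}$ (as in the paper), for which $(1+K^2\tau^{1/4})(1+K\sqrt\tau)\leq 1+3K^3\tau^{1/4}$ does hold using only $K\geq 1$ and $\tau\leq 1$; then \eqref{fwddeg-far-bound} gives roughly $(\tau^{3/4}/K^2+2\tau^{1/4}/K)tn\leq 3\tau^{1/4}tn/K$ for this term, and together with your $2\alpha tn$ and $(1+K\sqrt\tau)tr^2$ estimates (using $\alpha\ll\tau,1/K$) the claimed bound follows.
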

  \begin{claimproof}
  Fix $e\in X'$. Note that
  \begin{equation}\label{eqn:boundclaim3}
      |N(e) \cap (\cW \setminus N(e^*))| = |N(e) \cap \cH^{\ordering e^*}| - |(N(e) \cap \cH^{\ordering e^*}) \setminus \cW| - |N(e) \cap N(e^*) \cap \cW|.
  \end{equation}
  
  Since $|N(e) \cap \cH^{\ordering e^*}| \geq (1-\tau)tn$ by Claim~\ref{claim:numorder}, it suffices to bound the last two terms appropriately.
  
  First, we aim to bound $|(N(e) \cap \cH^{\ordering e^*}) \setminus \cW|$.
  For every $f \in \cH^{\ordering e^*} \setminus \cW$, by~\ref{ordering-by-size} and the definition of $\cW$, it follows that\COMMENT{here we used $\frac{(1 + 3 \tau^{1/4} K^3)}{(1 + K^2 \tau^{1/4})}r \ge (1+ K \sqrt{\tau})r$ which holds since $1 + 3 \tau^{1/4} K^3 \ge 1 + K^3 \tau^{3/4} + K^2 \tau^{1/4} + K \sqrt{\tau}$.} $|V(f)| \geq (1 + 3 K^3 \tau^{1/4})r \geq (1 + K^2 \tau^{1/4})|V(e)|$. 
  Now we apply Proposition~\ref{fwdnbr-prop}\eqref{fwddeg-far-bound} to $e$ with $K^2\tau^{1/4}$ and $K\sqrt\tau$ playing the roles of $\alpha_1$ and $\alpha_2$ respectively. Let $m_1$ and $m_2$ be defined as in Proposition~\ref{fwdnbr-prop}.  Then $|(N(e)\cap \cH^{\ordering e^*})\setminus \cW)| \leq m_1$ and $m_1 + m_2 \geq (1 - \tau)tn$ by Claim~\ref{claim:numorder}. Thus, we obtain\COMMENT{The proposition gives the following in the denominator below $|V(e)|-1-\alpha_2 = |V(e)|-1- K \sqrt{\tau}$ and we used $|V(e)|-1- K \sqrt{\tau} \ge |V(e)|/2$, i.e., $|V(e)| \ge 2(1 + K\sqrt{\tau})$ but this is true since we assumed $1/r_1 \ll 1/K$ and $|V(e)| \ge r_1$.} 
  \begin{equation}\label{nbrhood-outside-W-bound}
    |(N(e) \cap \cH^{\ordering e^*}) \setminus \cW| \leq \left(\tau + \frac{1 + K\sqrt\tau + K\sqrt\tau |V(e)|}{|V(e)|/2}\right)\frac{tn}{K^2\tau^{1/4}} \leq \frac{6\tau^{1/4}}{K}tn,
  \end{equation}
  where we used $|V(e)| \geq r_1$.\COMMENT{In the second inequality we used $\left(\tau + \frac{1 + K\sqrt\tau + K\sqrt\tau |V(e)|}{|V(e)|/2}\right)\frac{tn}{K^2\tau^{1/4}} \leq \left (\tau + \frac{2 K \sqrt{\tau} |V(e)|}{|V(e)|/2} \right) \frac{tn}{K^2\tau^{1/4}} \leq \left (\tau + 4 K \sqrt{\tau} \right) \frac{tn}{K^2\tau^{1/4}} \leq \frac{6 \tau^{1/4}}{K} tn$}
  
  Now we aim to bound $|N(e)\cap \fwdnbr(e^*) \cap \cW|$.
  Let $N_1 \coloneqq \{f\in \cW : V(e) \cap V(e^*) \cap V(f) \ne \varnothing \}$ and $N_2 \coloneqq (N(e)\cap \fwdnbr(e^*) \cap \cW) \setminus N_1$.
  Then   by~\ref{ordering-by-size}, Observation~\ref{obs:degree} and the fact that $|V(e) \cap V(e^*)| \leq \alpha r$, we have
  \begin{equation*}
      |N_1| \leq \sum_{v \in V(e) \cap V(e^*)} d_{\cW} (v) \leq \alpha r \cdot \frac{t (n-1)}{r - 1}  \leq 2 \alpha tn.
  \end{equation*}
  Also, $|N_2| \leq t|V(e) \setminus V(e^*)||V(e^*) \setminus V(e)| \leq t(1 + K\sqrt \tau)r^2$, since $V(f)$ must intersect both $V(e) \setminus V(e^*)$ and $V(e^*) \setminus V(e)$ for every $f \in N_2$, and $\cH$ has maximum codegree at most $t$. 
  Hence
  \begin{equation}\label{eqn:nbrhood-intersection}
      |N(e)\cap \fwdnbr(e^*) \cap \cW| = |N_1| + |N_2| \leq 2 \alpha t n + (1 + K\sqrt \tau)tr^2.
  \end{equation}
  
  Combining Claim~\ref{claim:numorder},~\eqref{eqn:boundclaim3},~\eqref{nbrhood-outside-W-bound}, and~\eqref{eqn:nbrhood-intersection}, we obtain\COMMENT{since $\alpha \ll \tau, 1/K$}
  \begin{align*}
      |N(e) \cap (\cW \setminus N(e^*))| &\geq (1-\tau)tn - \frac{6\tau^{1/4}}{K}tn - 2\alpha tn - (1 + K\sqrt \tau)tr^2\\
      &\geq \left(1 - \tau - \frac{7 \tau^{1/4}}{K} \right) tn - (1 + K\sqrt \tau)tr^2,
  \end{align*}
  as desired.
  \end{claimproof}
  
  For every $f \in \cW\setminus N(e^*)$, since $\cH$ has maximum codegree at most $t$, we have
  \begin{equation}\label{W-nbrs-in-X-bound}
    |N(f) \cap X'| \leq |N(f) \cap N(e^*)| \leq t|V(f)||V(e^*)| \leq t(1 + 3\tau^{1/4}K^3)r^2.
  \end{equation}
  
  Since
  \begin{align*}
    \sum_{e\in X'}|N(e)\cap(\cW\setminus N(e^*))| &= |\{(e, f) : e \in X',\ f \in \cW\setminus N(e^*),\ e \in N(f)\}|\\
    &= \sum_{f\in \cW\setminus N(e^*)} | N(f) \cap X'|,
  \end{align*} 
  by combining Claim~\ref{X-nbrs-in-W-bound} and~\eqref{W-nbrs-in-X-bound}, we have
  \begin{equation*}
    |\cW\setminus N(e^*)| \geq  |X'|\left(\frac{(1 - \tau - 7\tau^{1/4}/K)n}{(1 + 3\tau^{1/4} K^3)r^2} - \frac{1 + K\sqrt\tau}{1 + 3\tau^{1/4} K^3}\right),
  \end{equation*}
  and since $X' \subseteq N(e^*) \cap \cW$, this implies
\begin{align*}
  |\cW| &\geq |X'|\left(\frac{(1 - \tau - 7\tau^{1/4}/K)n}{(1 + 3\tau^{1/4} K^3)r^2} + 1 - \frac{1 + K\sqrt\tau}{1 + 3\tau^{1/4} K^3}\right) \geq |X'| \frac{(1 - \tau - 7\tau^{1/4}/K)n}{(1 + 3\tau^{1/4} K^3)r^2} \\
  &\overset{{\bf Claim}~\ref{nbrs-far-ahead-of-e^*-bound}}{\geq} \left(1 - \tau - \frac{2\sqrt\tau}{K}\right)\left(\frac{1 - \tau - 7\tau^{1/4}/K}{1 + 3\tau^{1/4} K^3}\right)\frac{tn^2}{r^2}.
\end{align*}
Therefore,\COMMENT{in the last inequality below we used $(1 - \frac{1}{r})(1-\tau-\frac{2 \sqrt{\tau}}{K}) \ge 1 - \tau - \frac{2 \sqrt{\tau}}{K} - \frac{(1-\tau-\frac{2 \sqrt{\tau}}{K})}{r} \ge 1 - \tau - \frac{2 \sqrt{\tau}}{K} -\frac{1}{r}$, which is at least $1 - \tau - \frac{7 \tau^{1/4}}{K}$ since $\frac{1}{r} \le  \frac{1}{r_1} \ll \tau, \frac{1}{K}$.}
\begin{equation*}
    \vol_{\cH}(\cW) \geq |\cW|\binom{r}{2} / \binom{n}{2} \geq t \frac{(1 - \tau - 7\tau^{1/4}/K)^2}{1 + 3\tau^{1/4} K^3},
\end{equation*}
so $\cW$ satisfies~\ref{W-volume}, as desired, completing the proof of Lemma~\ref{reordering-lemma}.
\end{proof}

\subsection{Proofs of Lemmas~\ref{lemma:good-ordering} and~\ref{lemma:good-ordering-extremal}}

Now we can prove Lemmas~\ref{lemma:good-ordering} and~\ref{lemma:good-ordering-extremal}.  In both proofs, we apply Lemma~\ref{reordering-lemma} twice and combine the resulting orderings to obtain the desired ordering $\ordering$ of $\cH$.

\begin{proof}[Proof of Lemma~\ref{lemma:good-ordering}]
    First, we apply Lemma~\ref{reordering-lemma} to $\cH$ with $2\mu$ and $1$ playing the roles of $\tau$ and $K$, respectively, to obtain an ordering $\ordering_1$.  
    If $\ordering_1$ satisfies~\ref{reordering-good}, then $\chi'_\ell(\cH) < (1 - \mu)tn$, so we assume~\ref{reordering-volume} holds.  Let $\cW$ be the set $\cW$ obtained from~\ref{reordering-volume}, let $e^*$ be the last edge of $\cW$ in $\ordering_1$, and let $\cH_1 \coloneqq \cH\setminus \cH^{\ordering_1 e^*}$.  Let $f^*$ be the edge of $\cW$ which comes first in $\ordering_1$, and let $\cH_2 \coloneqq \cH \setminus \{e \in \cH : f^*\ordering_1 e\}$. By the choices of $\tau$ and $K$, and since $\mu \ll \delta$, we have\COMMENT{since $\tau = 2 \mu$ and $K =1$ we have $1+ 3\tau^{1/4} K^3 = 1+ 3 \cdot 2^{1/4}\mu^{1/4} \le 1+4\mu^{1/4}$.} $\rank(\cW)\leq (1 + 4\mu^{1/4})|V(e^*)| \leq (1 + \delta)\antirank(\cW)$ and $\vol_\cH(\cW) \geq t(1 - \mu^{1/5})^3 \geq (1 - \delta)t$, 
    so $\cW$ satisfies~\ref{W-max-edge-size} and~\ref{W-volume2}, as desired, and by~\ref{ordering-by-size} of ~\ref{reordering-volume}, we may assume without loss of generality that every $e\in \cH$ satisfying $f^* \ordering_1 e \ordering_1 e^*$ is in $\cW$, so $\cH$ is partitioned into $\cH_1$, $\cW$, and $\cH_2$, as required, and $\cH_2$ satisfies~\ref{H2-edge-size}, as desired.
    
    Now we reapply Lemma~\ref{reordering-lemma} to $\cH_2$ and show that the resulting ordering satisfies~\ref{reordering-goodness1}--\ref{partition-order}, as follows.  Apply Lemma~\ref{reordering-lemma} with $\cH_2$, $1 - 1/2000$, and $2000^2$ playing the roles of $\cH$, $\tau$, and $K$, respectively, to obtain an ordering $\ordering_2$.  Since $\cW\cap \cH_2 = \varnothing$ and $\cH$ has maximum codegree at most $t$, we have $\vol_\cH(\cW) + \vol_\cH(\cH_2) \leq t$.  Thus, $\ordering_2$ satisfies~\ref{reordering-good}, because~\ref{reordering-volume} would imply there is a set $\cW'\subseteq \cH_2$ disjoint from $\cW$ with $\vol_{\cH}(\cW') > \delta t$, contradicting~\ref{W-volume2}.  
    Combine $\ordering_1$ and $\ordering_2$ to obtain an ordering $\ordering$ of $\cH$ where 
    \begin{itemize}
    \item if $f\in\cH_1 \cup \cW$, then $e\ordering f$ for every $e\in \cH^{\ordering_1 f}$, and
    \item if $f\in\cH_2$, then $e \ordering f$ for every $e\in \cH_2^{\ordering_2 f}$.
    \end{itemize}
    Since $\cH_1$ and $\ordering_1$ satisfy~\ref{ordering-goodness} of~\ref{reordering-volume} with $\tau = 2\mu$,~\ref{reordering-goodness1} holds, and since $\cH_2$ and $\ordering_2$ satisfy~\ref{reordering-good} with $\tau = 1 - 1/2000$,~\ref{reordering-goodness2} holds.  Finally, by the definition of $\ordering$,~\ref{partition-order} holds, as desired.
\end{proof}

\begin{proof}[Proof of Lemma~\ref{lemma:good-ordering-extremal}]
    Let $\mu$ satisfy $1 / r_1 \ll \mu \ll \delta$, and let $\cW \coloneqq \{e \in \cH : |V(e)| = (1 \pm \delta)\sqrt n\}$.  By assumption,
    \begin{equation}\label{eqn:volW}
        \vol_\cH(\cW) \geq \left.(1 - \delta)tn\binom{(1 - \delta)\sqrt n}{2}\middle / \binom{n}{2}\right. \geq (1 - 3\delta)t.
    \end{equation}
    First, apply Lemma~\ref{reordering-lemma} to $\cH$ with $1 - \gamma$ and $\gamma^{-2}$ playing the roles of $\tau$ and $K$, respectively, to obtain an ordering $\ordering_1$.  If $\ordering_1$ satisfies~\ref{reordering-good}, then the lemma holds with $\cH_3, \cH_2 \coloneqq \varnothing$ and $\cH_1 \coloneqq \cH$ and $\ordering_1$ playing the role of $\ordering$, so we may assume that~\ref{reordering-volume} holds.
    Let $\cW_1$ be the set $\cW$ obtained from~\ref{reordering-volume}, let $e^*_1$ be the last edge of $\cW_1$, let $r_2\coloneqq |V(e^*_1)|$, let $\cH_1 \coloneqq \cH\setminus \cH^{\ordering_1e^*_1}$, and let $\cH^{\mathrm{left}} \coloneqq \cH\setminus \cH_1$.
  By the choices of $\tau$ and $K$, and since $\delta \ll \gamma \ll 1$, we have
  \begin{enumerate}[label = {($\mathrm{W}_1\arabic*)$}]
  \item\label{W1-max-size} $\rank(\cW_1) \leq r_2 / \gamma^{10}$ and
  \item\label{W1-volume} $\vol_\cH(\cW_1) \geq t \gamma^{20} > 3\delta t$.
  \end{enumerate}
  
    By~\eqref{eqn:volW} and~\ref{W1-volume}, $\cW \cap \cW_1 \neq \varnothing$, so by~\ref{W1-max-size}, $r_2 \geq \gamma^{10}(1 - \delta)\sqrt n > r_0$.  Thus, by~\ref{ordering-by-size} of~\ref{reordering-volume}, 
    $\cH_1$ satisfies~\ref{H1-edge-size}, as required.
  
    Now apply Lemma~\ref{reordering-lemma} to $\cH^{\mathrm{left}}$ with $\mu$ and $1$ playing the roles of $\tau$ and $K$, respectively, to obtain an ordering $\ordering_2$, and combine $\ordering_1$ and $\ordering_2$ to obtain an ordering $\ordering$ of $\cH$ where 
    \begin{itemize}
    \item if $f\in\cH_1$, then $e\ordering f$ for every $e\in \cH^{\ordering_1 f}$, and
    \item if $f\in\cH^{\mathrm{left}}$, then $e \ordering f$ for every $e\in (\cH^{\mathrm{left}})^{\ordering_2 f}$.
    \end{itemize}
    If $\ordering_2$ satisfies~\ref{reordering-good}, then $\ordering$ satisfies~\ref{reordering-goodness3} with $\cH_2 \coloneqq \cH^{\mathrm{left}}$ and~\ref{reordering-goodness4}\COMMENT{by (O1) of~\ref{reordering-volume} since we set $\tau = 1 - \gamma$ when we defined $\ordering_1$}, and the lemma holds with $\cH_3 \coloneqq \varnothing$, so we may assume that~\ref{reordering-volume} holds.
    Let $\cW_2$ be the set $\cW$ obtained from~\ref{reordering-volume}, let $e^*_2$ be the last edge of $\cW_2$ in $\ordering_2$, let $r_3 \coloneqq |V(e^*_2)|$, let $\cH_2 \coloneqq \cH^{\mathrm{left}}\setminus (\cH^{\mathrm{left}})^{\ordering_2 e^*_2}$, and let $\cH_3 \coloneqq \cH^{\mathrm{left}}\setminus\cH_2$.  Now $\cH_1$, $\cH_2$, and $\cH_3$ indeed partition $\cH$, and $\ordering$ satisfies~\ref{partition-order2}, as required.  By the choices of $\tau$ and $K$, and since $\mu \ll \delta\ll 1$, we have
  \begin{enumerate}[label = {($\mathrm{W}_2\arabic*)$}]
  \item\label{W2-max-size} $\rank(\cW_2) \leq (1 + 3\mu^{1/4})r_3$ and
  \item\label{W2-volume} $\vol_\cH(\cW_2) \geq t(1 - \mu^{1/5})$. 
  \end{enumerate}
  
  By~\eqref{eqn:volW} and~\ref{W2-volume}, $\cW \cap \cW_2 \neq \varnothing$, so by~\ref{W2-max-size}, $r_3 \geq (1 - \delta)\sqrt n / (1 + 3\mu^{1/4}) \geq (1 - 2\delta)\sqrt n$.  Thus, by~\ref{ordering-by-size} of~\ref{reordering-volume}, $\cH_3$ satisfies~\ref{W-min-edge-size}, as required.  Finally, since $\cH_1$ and $\ordering_1$ satisfy~\ref{ordering-goodness} of~\ref{reordering-volume} with $\tau = 1 - \gamma$,~\ref{reordering-goodness4} holds, and since $\cH_2$ and $\ordering_2$ satisfy~\ref{ordering-goodness} of~\ref{reordering-volume} with $\tau = \mu$,~\ref{reordering-goodness3} holds, as desired.  
\end{proof}

\section{The extremal case}\label{fpp-extremal-section}

In this section, we prove Lemma~\ref{extremal-case-lemma} which proves the special case of Theorem~\ref{main-thm} when all edges have size at least $(1-o(1))\sqrt{n}$. 
In the proof of Theorem~\ref{main-thm} in Section~\ref{section:proof}, this lemma will be used to colour the edges of $\cH_3$ obtained from an application of Lemma~\ref{lemma:good-ordering-extremal}.

\begin{lemma}\label{extremal-case-lemma}
Let $0 < 1/n_0 \ll \delta \ll 1$, and let $n,t \in \mathbb{N}$ where $n \geq n_0$. If $\cH$ is an $n$-vertex hypergraph with $\Delta_2(\cH) \leq t$ and $\antirank(\cH) \geq (1 - \delta) \sqrt n$,
then $\chi_\ell'(\cH) \leq tn$. Moreover, if $\chi_\ell'(\cH) = tn$, then $\cH$ is intersecting.
\end{lemma}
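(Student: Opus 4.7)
The plan is to split into two cases based on whether $\cH$ is intersecting. The intersecting case is essentially immediate from Theorem~\ref{thm:char_intersect}. Since $(1-\delta)\sqrt{n} > 2$ for $n$ sufficiently large, $\cH$ has no edges of size one, so Theorem~\ref{thm:char_intersect} applies. Choosing $v^* \in V(\cH)$ maximizing $|N[v^*]|$, we obtain $e(\cH) \leq t|N[v^*]| \leq tn$. Since $\cH$ is intersecting, $L(\cH)$ is the complete graph $K_{e(\cH)}$, so $\chi_\ell'(\cH) = e(\cH) \leq tn$. Moreover, if $\chi_\ell'(\cH) = tn$ in this case, then $e(\cH) = tn$, and the equality case of Theorem~\ref{thm:char_intersect}, combined with the fact that $(1-\delta)\sqrt{n} \geq 3$ rules out $t$-fold near-pencils (whose edges have size two), forces $\cH$ to be a $t$-fold projective plane, which is in particular intersecting.

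For the non-intersecting case, the ``moreover'' clause translates (by integrality of the list chromatic index) to the strict bound $\chi_\ell'(\cH) \leq tn - 1$. My plan is as follows. Fix disjoint edges $e_1, e_2 \in \cH$, whose existence is guaranteed. I would aim to locate a matching $M \subseteq \cH$ with $\{e_1, e_2\} \subseteq M$ together with a colour $c \in \bigcap_{e \in M} L(e)$, pre-colour every edge of $M$ with $c$, and then extend this partial colouring to the remaining hypergraph $\cH \setminus M$ using only the other $tn - 1$ colours. The extension step would proceed by ordering $\cH \setminus M$ in a manner analogous to the orderings produced in Lemma~\ref{lemma:good-ordering} and Lemma~\ref{lemma:good-ordering-extremal}, so that each edge has few forward-neighbours and can be coloured greedily from its reduced list. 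If no single colour is common across any matching of size at least two, a Hall-type / hypergraph matching argument should instead supply an augmenting structure that converts the non-intersecting assumption into a usable savings of one colour.

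\textbf{Main obstacle.} The non-intersecting case is the crux. The edge-size and codegree hypotheses yield only $e(\cH) \leq (1 + O(\delta))tn$, so $e(\cH)$ may exceed $tn$; any valid $tn$-colouring must then group multiple disjoint edges into common classes. Simultaneously, $L(\cH)$ can have clique number essentially $tn$ (attained by any near-extremal intersecting sub-family produced by Theorem~\ref{thm:char_intersect}) and maximum degree essentially $(1+O(\delta))tn$, so no generic ``$\chi \leq \Delta + 1$'' or Reed-type bound delivers $tn$. The technical heart of the argument therefore lies in converting the purely local obstruction to intersection — the single disjoint pair $\{e_1,e_2\}$ — into a global saving of one colour, which appears to require a delicate structural analysis of hypergraphs that are close to, but not exactly, a $t$-fold projective plane, together with the list-colouring tools assembled elsewhere in the paper.
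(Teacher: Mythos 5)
Your first case (intersecting $\cH$) is fine: since every edge has size at least $(1-\delta)\sqrt n \ge 2$, Theorem~\ref{thm:char_intersect} gives $e(\cH)\le t\max_v|N[v]|\le tn$, and as $L(\cH)$ is complete, $\chi'_\ell(\cH)=e(\cH)\le tn$ (the discussion of the equality case is not even needed for this lemma, since the ``moreover'' clause is vacuous when $\cH$ is intersecting). The problem is that the entire content of the lemma lies in the non-intersecting case, and there your proposal is only a sketch of an approach that, as written, does not work. Note first that non-intersecting $\cH$ can have $e(\cH)>tn$ (the hypotheses only give $e(\cH)\le(1+O(\delta))tn$), so even the weak bound $\chi'_\ell(\cH)\le tn$ is not automatic; and the line graph has maximum degree up to roughly $(1+O(\delta))tn$, so the greedy extension you envisage after pre-colouring a matching has no reason to succeed with $tn-1$ colours. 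More fundamentally, in the list setting you cannot ``pre-colour every edge of $M$ with a common colour $c$'': distinct edges have distinct lists and no common colour need exist, and your fallback (``a Hall-type / hypergraph matching argument should instead supply an augmenting structure'') is exactly the missing argument, not a proof. You acknowledge this yourself under ``Main obstacle,'' so the crux of the lemma remains unproved.

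For comparison, the paper's route is different and is what makes the list version go through: it reduces everything to finding a matching of size $e(\cH)-tn+1$ in the complement of the line graph, because by Observation~\ref{lem:chromindex} (an Erd\H{o}s--Rubin--Taylor-type fact about list colouring graphs whose complement has a large matching) such a matching already certifies $\chi'_\ell(\cH)\le tn-1$; no greedy extension with reduced lists is needed. The matching is produced not from a single disjoint pair but from many \emph{$t$-useful} pairs (pairs $e,f$ with $|N(e)\cap N(f)|\le tn-3$), via Propositions~\ref{lem:chromindex0} and~\ref{lem:chromindex1}; these pairs are found by partitioning the edges by size around $k\approx\sqrt n$ into $A^-,A^+,B$, bounding $r=e(\cH)-tn\le 2\delta|A|$ by a volume computation, and running a case analysis (Propositions~\ref{na-prop0} and~\ref{na-prop}, Claim~\ref{claim:useful-pairs}) on $|A|$, $|A^-|$, $|A^+|$. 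Some quantitative substitute for this machinery --- converting the codegree and edge-size hypotheses into many low-common-degree intersecting pairs --- is what your proposal would need to supply before it could be considered a proof.
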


Note that $\vol_\cH(\cH) \leq t$ and $\antirank(\cH) \geq (1 - \delta)\sqrt{n}$ imply that 
\begin{align*}
    \chi_\ell'(\cH) \leq |\cH| \leq t \binom{n}{2} \binom{(1 - \delta) \sqrt{n}}{2}^{-1} \leq (1 + O(\delta))tn,
\end{align*}
where each colour class has size at most one. 
Thus, to prove Lemma~\ref{extremal-case-lemma}, 
the general strategy is to find a large matching in the complement of the line graph of $\cH$.
Such a matching naturally gives rise to a proper edge-colouring of $\cH$ with colour classes of sizes one and two. More precisely, for a hypergraph $\cH$, recall that $L(\cH)$ denotes the line graph of $\cH$ and $\overline{L(\cH)}$ denotes its complement.
First we need the following observation. It directly follows from the lemma~\cite[p.~152]{erdos1979choosability} that $\chi_\ell (K_{2 \ast r}) = r$ for all $r \in \mathbb{N}$, where $K_{2 \ast r}$ is a complete $r$-partite graph with all parts of size two. It also follows immediately from~\cite[Lemma~4.2]{KP18}.
\COMMENT{They only prove it for complete multipartite graphs with parts of size two, but we can reduce to this case by first greedily coloring vertices in parts of size one.  It also follows immediately from~\cite[Lemma~4.2]{KP18}.}

\begin{obs}\label{lem:chromindex}
Let $k,n \in \mathbb{N}$, and let $\cH$ be an $n$-vertex hypergraph. If there is a matching $N$ in $\overline{L(\cH)}$ of size at least $e(\cH) - k$, then $\chi_\ell'(\cH) \leq k$.
\end{obs}

A pair $\{e, f\}\subseteq \cH$ of distinct edges in an $n$-vertex hypergraph $\cH$ is \textit{$t$-useful} if $V(e) \cap V(f) \ne \varnothing$ and $|N(e) \cap N(f)| \leq tn - 3$.

\begin{proposition}\label{lem:chromindex0}
Let $n,t \in \mathbb{N}$, let $\cH$ be an $n$-vertex hypergraph with more than $tn$ edges, and let $r \coloneqq e(\cH) - tn$.
If there exist distinct pairwise intersecting edges $e_1 , \dots , e_{2r+2} \in \cH$ such that $\{e_{2i-1} , e_{2i}\}$ is $t$-useful for every $i \in [r+1]$, then $\chi_\ell'(\cH) < tn$.
\end{proposition}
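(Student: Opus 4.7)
My plan is to invoke Observation~\ref{lem:chromindex}: to conclude that $\chi_\ell'(\cH) < tn$, it suffices to exhibit a matching in $\overline{L(\cH)}$ of size at least $e(\cH) - (tn - 1) = r + 1$, which then gives $\chi_\ell'(\cH) \leq tn - 1$. I will build such a matching by extracting one edge $\{a_i, b_i\}$ from each of the $r + 1$ $t$-useful pairs $\{e_{2i-1}, e_{2i}\}$, with $a_i$ taken from the pair itself and $b_i$ some other edge of $\cH$ that is vertex-disjoint from $a_i$.

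For each $i \in [r+1]$, I will set $S_i \coloneqq \cH \setminus (\{e_{2i-1}, e_{2i}\} \cup (N(e_{2i-1}) \cap N(e_{2i})))$ and record two properties. First, $t$-usefulness gives $|S_i| \geq e(\cH) - 2 - (tn - 3) = r + 1$. Second, since $e_1, \ldots, e_{2r+2}$ are pairwise intersecting, for every $j \neq i$ both $e_{2j-1}$ and $e_{2j}$ lie in $N(e_{2i-1}) \cap N(e_{2i})$, so $S_i$ is disjoint from $\{e_1, \ldots, e_{2r+2}\}$. I will then process the pairs in order $i = 1, \ldots, r+1$ greedily: pick $b_i \in S_i \setminus \{b_1, \ldots, b_{i-1}\}$, which is nonempty since $|S_i| \geq r + 1 > i - 1$, and then choose $a_i \in \{e_{2i-1}, e_{2i}\}$ with $V(a_i) \cap V(b_i) = \varnothing$, which exists because $b_i \notin N(e_{2i-1}) \cap N(e_{2i})$.

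Finally, I will verify that $\{\{a_i, b_i\} : i \in [r+1]\}$ is indeed a matching in $\overline{L(\cH)}$: the $a_i$'s are distinct since they come from disjoint pairs; the $b_i$'s are distinct by the greedy choice; each $\{a_i, b_i\}$ is a non-edge of $L(\cH)$ by construction; and each $a_i$ (being some $e_k$) differs from every $b_j$ because every $b_j$ lies outside $\{e_1, \ldots, e_{2r+2}\}$ by the second property of $S_j$. I do not anticipate any significant obstacle: the one delicate point is that $|S_i| \geq r + 1$ is just tight against the $i - 1 \leq r$ previously committed $b_j$'s, which is precisely why the hypothesis requires $2r + 2$ intersecting edges (i.e.\ $r+1$ pairs) rather than fewer.
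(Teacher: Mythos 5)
Your proposal is correct and matches the paper's own argument essentially step for step: both reduce via Observation~\ref{lem:chromindex} to finding a matching of size $r+1$ in $\overline{L(\cH)}$, and both build it greedily by choosing, for each useful pair, a partner edge outside $N(e_{2i-1})\cap N(e_{2i})$ and the previously used edges, with the same counting $e(\cH)-(tn-3)-2-(i-1)\geq 1$ and the same use of pairwise intersection to keep the partners outside $\{e_1,\dots,e_{2r+2}\}$. No gaps; the only difference is cosmetic (you remove the earlier $b_j$'s in the greedy step rather than folding them into the definition of $S$).
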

\begin{proof}
By Observation~\ref{lem:chromindex}, it suffices to show that there exists a matching $N$ in $\overline{L(\cH)}$ of size $r+1  = e(\cH)-tn + 1$.

Let $i \in [r+1]$. Suppose there are distinct edges $f_1 , \dots , f_{i-1} \in \cH \setminus \{e_1 , \dots , e_{2r+2}\}$, where $V(f_j)$ does not intersect at least one of $V(e_{2j-1})$ or $V(e_{2j})$ for every $j \in [i - 1]$.  Now we choose $f_i \in \cH \setminus \{e_1 , \dots , e_{2r+2} , f_1 , \dots , f_{i-1} \}$ such that $V(f_i)$ does not intersect at least one of $V(e_{2i-1})$ or $V(e_{2i})$, as follows. Let
\begin{equation*}
    S \coloneqq \cH \setminus ((N(e_{2i-1}) \cap N(e_{2i})) \cup \{ e_{2i-1}, e_{2i} , f_1 , \dots , f_{i-1} \}).
\end{equation*}

For each $f \in S$, the set $V(f)$ does not intersect at least one of $V(e_{2i-1})$ or $V(e_{2i})$. Since, by definition, $|N(e_{2i-1}) \cap N(e_{2i})| \leq tn-3$ and $i-1 \leq r = e(\cH)-tn$,
\begin{equation*}
    |S| \geq e(\cH) - (tn-3) - 2 - (i-1) \geq 1.
\end{equation*}
Thus, $S \neq \varnothing$, so there exists $f_i \in S$. Since $e_1 , \dots , e_{2r+2}$ are pairwise intersecting, we have $S \cap \{e_1 , \dots , e_{2r+2} \} = \varnothing$, so $f_i \in \cH \setminus \{e_1 , \dots , e_{2r+2}\}$, as desired.

Therefore, let $f_1, \dots, f_{r+1} \in \cH \setminus \{e_1 , \dots , e_{2r+2}\}$ be chosen using the above procedure.
For every $j \in [r+1]$, $V(f_j)$ does not intersect at least one of $V(e_{2j-1})$ or $V(e_{2j})$, so this yields a matching $N$ in $\overline{L(\cH)}$ of size $r+1$, completing the proof.
\end{proof}

The following proposition shows that if we find sufficiently many $t$-useful pairs (satisfying some additional properties) then $\chi'_\ell(\cH) < tn$. To find such $t$-useful pairs will use Propositions~\ref{na-prop0} and \ref{na-prop}.\COMMENT{If $|A|$ is small (at most $t/(2\delta)$, say), then by Prop~\ref{na-prop0}, we know that all distinct intersecting pairs in $A$ are $t$-useful, so we can apply the first bullet of Prop~\ref{lem:chromindex1}. If $|A|$ is large and $|A^-|$ is small (at most $t/(4\delta)$, say), then by Prop~\ref{na-prop}, we know that all distinct intersecting pair (with small intersection) in $A$ are $t$-useful, so we can apply the second bullet of Prop~\ref{lem:chromindex1}. If both $|A|$ and $|A^-|$ are large, and $|A^+|$ is smaller than $\alpha \sqrt{n}|A^-|$, then $A^-$ and $A^+ \cup B$ playing the roles of $A$ and $B$, by Prop~\ref{na-prop}, we know that all distinct intersecting pair (with small intersection) in $A^-$ are $t$-useful, so we can apply the second bullet of Prop~\ref{lem:chromindex1}. Thus we ensure that both $|A|$ and $|A^-|$ are large, and $|A^+|$ is at least $\alpha \sqrt{n}|A^-|$.}

\begin{proposition}\label{lem:chromindex1}
Let $\alpha \in (0,1)$, let $n,t \in \mathbb{N}$, let $\cH$ be an $n$-vertex hypergraph such that $e(\cH) > tn$, $\Delta_2(\cH) \leq t$, and $\antirank(\cH) \geq 2/\alpha$. 
Let $r \coloneqq e(\cH) - tn$.  If some $A \subseteq \cH$ satisfies either
\begin{enumerate}[(i)]
    \item\label{hypo:A-bound} $|A| \geq 6r$, and $\{ e,f \}$ is $t$-useful for every distinct intersecting $e,f \in A$, or
    
    \item\label{hypo:A-bound-intersect} $|A| \geq 6r + 2t/\alpha^2$, and $\{ e,f \}$ is $t$-useful for every distinct intersecting $e,f \in A$ with $|V(e) \cap V(f)| \leq \alpha \max(|V(e)|,|V(f)|)$,
\end{enumerate}
then $\chi'_\ell(\cH) < tn$.
\end{proposition}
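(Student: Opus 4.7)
My goal in both cases is to produce $2r+2$ pairwise intersecting edges $e_1,\dots,e_{2r+2}\in\cH$ such that each of the consecutive pairs $\{e_{2i-1},e_{2i}\}$ is $t$-useful, after which Proposition~\ref{lem:chromindex0} immediately yields $\chi'_\ell(\cH)<tn$. Note that the hypothesis $e(\cH)>tn$ forces $r\geq 1$.

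First, let $M$ be a maximum matching in $\overline{L(\cH)}[A]$. If $|M|\geq r+1$, then Observation~\ref{lem:chromindex} already gives $\chi'_\ell(\cH)\leq e(\cH)-(r+1)=tn-1$, and we are done. Otherwise $|M|\leq r$, and by maximality the set $A'\coloneqq A\setminus V(M)$ contains no non-intersecting pair: any such pair could be added to $M$. Thus $A'$ is pairwise intersecting with $|A'|\geq |A|-2r$.

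In case~\eqref{hypo:A-bound}, we have $|A'|\geq 4r\geq 2r+2$. Any $2r+2$ edges of $A'$ form a pairwise intersecting family sitting inside $A$, so every consecutive pair is $t$-useful by hypothesis, and we apply Proposition~\ref{lem:chromindex0}. In case~\eqref{hypo:A-bound-intersect}, we have $|A'|\geq 4r+2t/\alpha^2$, but now we must additionally ensure that each chosen consecutive pair has small intersection. I would introduce the auxiliary ``bad'' graph $G$ on vertex set $A'$ whose edges are the pairs $\{e,f\}$ with $|V(e)\cap V(f)|>\alpha\max(|V(e)|,|V(f)|)$. Since $\alpha|V(e)|\geq 2$ for every edge, Observation~\ref{obs:intersect} gives $\Delta(G)\leq 2t/\alpha^2$, and hence the clique number of $G$ is at most $2t/\alpha^2+1$. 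I then greedily pick $r+1$ vertex-disjoint non-edges of $G$ one by one: after $i\leq r$ have been chosen, the remaining subset of $A'$ has size at least $4r+2t/\alpha^2-2i\geq 2r+2t/\alpha^2>2t/\alpha^2+1$, so it cannot induce a clique in $G$ and therefore contains a non-edge, which by the pairwise-intersecting property of $A'$ is a small-intersection intersecting pair in $A$, hence $t$-useful. The resulting $2r+2$ edges feed into Proposition~\ref{lem:chromindex0}.

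The main technical point is case~\eqref{hypo:A-bound-intersect}; the additive slack of $2t/\alpha^2$ in the lower bound on $|A|$ is exactly what is needed to keep the greedy procedure alive through all $r+1$ rounds, because it matches (up to an additive constant) the clique-number bound on $G$ coming from Observation~\ref{obs:intersect}. Beyond that, the argument is a clean two-step extraction: first a maximum-matching step that either finishes the proof via Observation~\ref{lem:chromindex} or produces a large intersecting subfamily, and then (in case~\eqref{hypo:A-bound-intersect}) a greedy extraction of disjoint low-overlap pairs inside that subfamily.
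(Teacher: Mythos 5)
Your proposal is correct and follows essentially the same route as the paper: take a maximum matching in the complement of the line graph (restricting it to $A$, as you do, is an immaterial variant), conclude that the leftover edges of $A$ are pairwise intersecting, and then in case (ii) use Observation~\ref{obs:intersect} to greedily extract $r+1$ disjoint small-intersection (hence $t$-useful) pairs, feeding everything into Proposition~\ref{lem:chromindex0}. Your bad-graph/clique-number phrasing of the greedy step is just a repackaging of the paper's direct counting argument, and the quantitative bookkeeping ($4r\ge 2r+2$ from $r\ge 1$, and the $2t/\alpha^2$ slack) matches.
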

\begin{proof}
Note that by assumption, $r \geq 1$.  
Let $A$ be any subset of $\cH$ satisfying either~\ref{hypo:A-bound} or~\ref{hypo:A-bound-intersect}.
Let $N$ be a matching of maximum size in $\overline{L(\cH)}$. If $|N| > r$, then the lemma follows from Observation~\ref{lem:chromindex}, so we may assume that $|N| \leq r$. 
By the maximality of $N$, all pairs $e,f \in A \setminus V(N)$ are adjacent in $L(\cH)$, thus $e$ and $f$ are intersecting in $\cH$.

Now we claim that we can choose $2r+2$ distinct (pairwise intersecting) edges $a_1 , \dots , a_{2r+2} \in A \setminus V(N)$ such that $\{ a_{2i-1} , a_{2i} \}$ is $t$-useful for each $i \in [r+1]$, so that we can apply Proposition~\ref{lem:chromindex0} to $\{a_1 , \dots , a_{2r + 2} \}$ to complete the proof.  Since $|V(N)| \leq 2r$, we have
\begin{equation}\label{eqn:a_minus_vn}
    |A \setminus V(N)| - 2r \geq |A| - 4r.
\end{equation}

First suppose~\ref{hypo:A-bound} holds.  Then $|A| \geq 6r$, so by~\eqref{eqn:a_minus_vn},
\begin{equation*}
    |A \setminus V(N)| - 2r \geq 2r \geq 2.
\end{equation*}
Thus, we can choose $2r+2$ distinct pairwise intersecting edges $a_1 , \dots , a_{2r+2} \in A \setminus V(N)$ which, by our assumption, satisfy that
$\{ a_{2i-1} , a_{2i} \}$ is $t$-useful for each $i \in [r+1]$,  as claimed.

Otherwise, if~\ref{hypo:A-bound-intersect} holds, then again by~\eqref{eqn:a_minus_vn}, 
\begin{equation}\label{eqn:a_minus_vn2}
    |A \setminus V(N)| - 2r \geq \frac{2t}{\alpha^2} + 2.
\end{equation}

Now we claim that we can choose $2r+2$ distinct $a_1 , \dots , a_{2r+2} \in A \setminus V(N)$ such that for each $i \in [r+1]$, we have $|V(a_{2i-1}) \cap V(a_{2i})| \leq \alpha |V(a_{2i-1})|$. Then by our assumption $\{ a_{2i-1} , a_{2i} \}$ is $t$-useful for each $i \in [r+1]$, allowing us to apply Proposition~\ref{lem:chromindex0} to $\{a_1 , \dots , a_{2r + 2} \}$ and complete the proof again.

Suppose for some $0 \leq i \leq r$, we have chosen $2i$ distinct $a_1 , \dots , a_{2i} \in A \setminus V(N)$ such that $|V(a_{2j-1}) \cap V(a_{2j})| \leq \alpha |V(a_{2j-1})|$ for $j \in [i]$. 
Since $|A \setminus (V(N) \cup\{ a_1 , \dots , a_{2i} \})| \geq  2 + 2t/\alpha^2$ by~\eqref{eqn:a_minus_vn2}, there exist $a_{2i+1} , a_{2i+2} \in A \setminus (V(N) \cup \{ a_1 , \dots , a_{2i} \})$ such that $|V(a_{2i+1}) \cap V(a_{2i+2})| \leq \alpha |V(a_{2i+1})|$ by Observation~\ref{obs:intersect}.  Thus, iterating this process, we can choose $2r + 2$ distinct $a_1, \dots, a_{2r + 2} \in A \setminus V(N)$ such that $|V(a_{2i-1}) \cap V(a_{2i})| \leq \alpha |V(a_{2i-1})|$ for each $i \in [r+1]$, as claimed.
\end{proof}

\begin{proposition}\label{na-prop0}
  Let $0 < 1/n_0 \ll \delta \ll 1$, and let $t,k,n \in \mathbb{N}$ where $n \geq n_0$ and $k^2 - k + 2 \leq n \leq k^2 + k + 1$. 
  Let $\cH$ be an $n$-vertex hypergraph with 
  $\Delta_2(\cH) \leq t$ and $\antirank(\cH) \geq (1 - \delta) \sqrt n$
  such that 
  $\cH$ has at most $t/(2\delta)$ edges of size at most $k$. If $e \in \cH$ satisfies $|V(e)| \leq k$, then $|N(e)| \leq tn - 3$.
\end{proposition}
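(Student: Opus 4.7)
The plan is to bound $|N(e)|$ by a careful double-counting argument, splitting the edges of $\cH$ by size. Set $d := |V(e)|$, and define $\cH_{\mathrm{big}} := \{f \in \cH : |V(f)| \geq k+1\}$ and $\cH_{\mathrm{small}} := \cH \setminus \cH_{\mathrm{big}}$. Since $|V(e)| \leq k$, we have $e \in \cH_{\mathrm{small}}$, and by hypothesis $|\cH_{\mathrm{small}}| \leq t/(2\delta)$. This already gives the trivial estimate $|N(e) \cap \cH_{\mathrm{small}}| \leq |\cH_{\mathrm{small}}| - 1 \leq t/(2\delta) - 1$.

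The core of the argument is a sharp per-vertex degree bound on $\cH_{\mathrm{big}}$. For each $v \in V(e)$, the codegree bound rewrites as $\sum_{g \ni v}(|V(g)| - 1) = \sum_{w \neq v} d_{\cH}(v,w) \leq t(n-1)$. Peeling off the contribution of $e$ itself (which is $d - 1$) and using that every edge of $\cH_{\mathrm{big}}$ through $v$ contributes at least $k$, I would obtain $k \cdot d_{\cH_{\mathrm{big}}}(v) \leq t(n-1) - (d-1)$. Summing over $v \in V(e)$ and using $|N(e) \cap \cH_{\mathrm{big}}| \leq \sum_{v \in V(e)} d_{\cH_{\mathrm{big}}}(v)$ yields
\[
|N(e) \cap \cH_{\mathrm{big}}| \leq \frac{d}{k}\bigl(t(n-1) - (d-1)\bigr).
\]

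The proof then splits into two cases based on $d$. If $d \leq k - 1$, the factor $d/k \leq 1 - 1/k$ already contributes a saving of order $t(n-1)/k = \Theta(t\sqrt{n})$, which, combined with the hierarchy $1/n_0 \ll \delta$ (so that $\sqrt{n} \gg 1/\delta$), easily dominates $|N(e) \cap \cH_{\mathrm{small}}| \leq t/(2\delta) - 1$, giving $|N(e)| \leq tn - 3$ with room to spare.

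The main case is $d = k$. The above gives $|N(e) \cap \cH_{\mathrm{big}}| \leq t(n-1) - (k-1)$, and then an integrality argument on $d_{\cH_{\mathrm{big}}}(v)$ (using that the real-valued bound $t(n-1)/k - 1 + 1/k$ is strictly less than $t(k+1)$ whenever $n \leq k^2 + k + 1$) sharpens this to $|N(e) \cap \cH_{\mathrm{big}}| \leq tn - t - k$. Adding the bound on small edges gives
\[
|N(e)| \leq tn - t - k + \frac{t}{2\delta} - 1,
\]
and the conclusion $|N(e)| \leq tn - 3$ rearranges to $k + t \geq t/(2\delta) + 2$, which follows from $k \geq (1-\delta)\sqrt{n}$ together with the hierarchy $1/n_0 \ll \delta$.

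The delicate step is this final case $d = k$: one must extract a full $\Theta(k)$ saving from both the contribution of $e$ to the per-vertex codegree sum and the integrality of $d_{\cH_{\mathrm{big}}}(v)$. Because any loss of even $O(1)$ per vertex would prevent the bound from closing, the accounting between the small-edge incidences at $V(e)$ (controlled by the $t/(2\delta)$ hypothesis) and the big-edge degree cap must be carried out with care.
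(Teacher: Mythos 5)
Your reduction to the per-vertex bound $k\,d_{\cH_{\mathrm{big}}}(v)\leq t(n-1)-(d-1)$ is valid, and your case $d\leq k-1$ does close, but the main case $d=|V(e)|=k$ has a genuine gap: the final inequality you need, $k+t\geq t/(2\delta)+2$, does not follow from the hypotheses, because the proposition places no upper bound on $t$. Your big-edge bound in this case saves only about $t+k$ below $tn$ (the $t$ from using $t(n-1)$ rather than $tn$, the $k-1$ from peeling off $e$, plus at most a tiny integrality gain), while the small neighbours are charged as a free additive $t/(2\delta)-1$ that is not offset against any codegree budget. As soon as $t\geq 2\delta(k-2)/(1-2\delta)$ (roughly $t\gtrsim 2\delta\sqrt n$, which is certainly allowed -- $t$ may even exceed $n$), one has $t/(2\delta)+2 > t+k$ and the estimate $|N(e)|\leq (tn-t-k)+t/(2\delta)-1$ no longer yields $tn-3$. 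A secondary issue: the claimed integrality sharpening to $|N(e)\cap\cH_{\mathrm{big}}|\leq tn-t-k$ only works when $n=k^2+k+1$; for $n$ down to $k^2-k+2$ the per-vertex integer bound $d_{\cH_{\mathrm{big}}}(v)\leq t(k+1)-1$ sums to $t(k^2+k)-k$, which exceeds $tn-t-k$. But even granting that sharpening, the argument fails for large $t$, so this is a structural problem rather than a bookkeeping slip.

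The paper's proof avoids exactly this trap by never discarding the size-at-most-$k$ neighbours from the counting: it bounds $\sum_{f\in N(e)}(|V(f)|-1)$ globally, lower-bounding the contribution of each of the $m\leq t/(2\delta)$ small neighbours by $(1-\delta)\sqrt n-1\geq (1-2\delta)k-1$ (they are small relative to $k$ but still large in absolute terms), so after dividing by $k$ their total cost is only about $2\delta m\leq t$. Crucially, the upper bound on that sum is obtained by counting only codegree pairs $(u,v)$ with $u\in V(e)$ and $v\notin V(e)$ (handling the few edges with $V(f)\subseteq V(e)$ separately via Observation~\ref{obs:intersect}), which gives $tk(n-k)$ rather than $tk(n-1)$; after dividing by $k$ this is a saving of order $tk$, which dominates both the $\leq t$ loss from small neighbours and the $O(t)$ correction from contained edges uniformly in $t$. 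To repair your argument you would need to recover a saving that scales like $t\cdot k$ in the case $d=k$ -- either by restricting the codegree budget to pairs straddling $V(e)$ as the paper does, or by charging the small neighbours against the budget instead of adding $t/(2\delta)$ for free; as written, the accounting cannot close.
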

\begin{proof}
Fix $e \in \cH$ such that $|V(e)| \leq k$. Let $m$ be the number of edges of size at most $k$ in $N(e)$. By our assumption, $m \leq t/(2\delta)$. Since $\cH$ has maximum codegree at most $t$, we have\COMMENT{$e$ is fixed. For each $f$ look at the complete bipartite graph with $|f|$ vertices in total and non-empty parts $f \setminus e$ and $f \cap e$. The smallest number of edges it can have is $|f|-1$. Adding up over all $f$, each pair $xy$ with $x \in e$ and $y \not \in e$ is counted at most $t$ times}
\begin{align*}
    \sum_{f \in N(e) : V(f) \not\subseteq V(e)} (|V(f)|-1) &\leq \sum_{f \in N(e)}|V(f)\setminus V(e)| |V(f) \cap V(e)| \\
    &= \sum_{u \in V(e) , v \notin V(e)} d(u,v)\\ 
    & \leq t|V(e)||V(\cH) \setminus V(e)| \leq tk(n-k).
\end{align*}
On the other hand, since every edge in $\cH$ has size at least $ (1-\delta)\sqrt{n} \geq (1-2\delta)k \geq (1-2\delta) |V(e)|$, by Observation~\ref{obs:intersect}, there are at most $2t / (1-2\delta)^2$ edges $f \in \cH$ with $V(f) \subseteq V(e)$. Thus,
\begin{align*}
    \sum_{f \in N(e) : V(f) \not\subseteq V(e)} (|V(f)|-1) &\geq \sum_{f \in N(e)} (|V(f)|-1) - |V(e)| \cdot \frac{2t}{(1-2\delta)^2}\\
    &\geq k(|N(e)| - m) + ((1-\delta)\sqrt{n} - 1)m - \frac{2kt}{(1-2\delta)^2}.
\end{align*}
Combining the lower and upper bound on $\sum_{f \in N(e) : V(f) \not\subseteq V(e)} (|V(f)|-1)$, we obtain
\begin{align*}
    |N(e)| \leq t\left (n-k + \frac{2}{(1-2\delta)^2} \right ) + m \left (1 - \frac{(1-\delta) \sqrt{n} - 1}{k} \right ) \leq t(n-4) + 2\delta m \leq tn-3,
\end{align*}
as desired.\COMMENT{for the second inequality we used $1 - \frac{(1-\delta) \sqrt{n} - 1}{k} \le 1 - \frac{(1- 2 \delta)k }{k}  = 2 \delta$ since $(1-\delta)\sqrt{n} \geq (1-2\delta)k+1$}
\end{proof}

\begin{proposition}\label{na-prop}
  Let $0 < 1/n_0 \ll \delta \ll \alpha \ll 1$, and let $t,k,n \in \mathbb{N}$ where $n \geq n_0$ and $k^2 - k + 2 \leq n \leq k^2 + k + 1$.
  
  Let $\cH$ be an $n$-vertex hypergraph with 
  $\Delta_2(\cH) \leq t$ and $\antirank(\cH) \geq (1 - \delta) \sqrt n$, 
  and let $e, f \in \cH$ be distinct intersecting edges of size at most $k$ such that $|V(e) \cap V(f)| \leq \alpha k$. If either
  \begin{enumerate}[(i)]
      \item\label{na-prop:(k-1)-edge} at least one of $e$ or $f$ has size at most $k-1$, 
      
      \item\label{na-prop:intersection-bound} $|V(e) \cap V(f)| \geq 2$, or
      
      \item\label{na-prop:k-1-bound} there exists a vertex in $V(e) \cap V(f)$ which is contained in at most $t / (4 \delta)$ edges of size at most $k-1$,
  \end{enumerate}
   then $\{e,f \}$ is $t$-useful.
\end{proposition}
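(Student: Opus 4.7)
The plan is to bound $|N(e)\cap N(f)|$ by a double-counting argument that splits $N(e)\cap N(f)$ according to its interaction with $V(e)\cap V(f)$, and then to squeeze out the required $-3$ using one of the three hypotheses. Writing $s:=|V(e)\cap V(f)|$, $s_e:=|V(e)|$, $s_f:=|V(f)|$, I would partition $N(e)\cap N(f)=A\sqcup B$, where
$$A\;:=\;\bigl\{g\in N(e)\cap N(f)\;:\;V(g)\cap V(e)\cap V(f)\neq \varnothing\bigr\},\qquad B\;:=\;(N(e)\cap N(f))\setminus A.$$
Double-counting over pairs $(u,v)\in (V(e)\setminus V(f))\times (V(f)\setminus V(e))$ and using the codegree bound $d(u,v)\le t$ (noting that neither $e$ nor $f$ contains any such pair) gives $|B|\le t(s_e-s)(s_f-s)$. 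Double-counting over $V(e)\cap V(f)$ and subtracting $e,f$ gives $|A|\le \sum_{w\in V(e)\cap V(f)}(d(w)-2)$, and by Observation~\ref{obs:degree} applied with $r=(1-\delta)\sqrt n$, every vertex satisfies $d(w)\le D:=t(n-1)/((1-\delta)\sqrt n-1)$, which in the allowed range $k^2-k+2\le n\le k^2+k+1$ is at most $tk+O(t)+O(\delta tk)$.

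For Case~(i), the assumption $s_e\le k-1$ yields $s_es_f\le k(k-1)\le n-2$. Expanding $(s_e-s)(s_f-s)=s_es_f-s(s_e+s_f-s)$ and using $s_e+s_f\ge 2(1-\delta)\sqrt n \gtrsim 2k$ together with $s\ge 1$ (since $e,f$ intersect), I get $|B|\le tn-2t-\Theta(stk)+O(\delta stk)$. Combined with $|A|\le s(D-2)=O(stk)$ and the hierarchy $\delta\ll \alpha\ll 1$, the $\Theta(stk)$ saving dominates the $O(\delta stk)$ error and gives $|A|+|B|\le tn-3$ for $n$ large. Case~(ii) proceeds analogously: $s\ge 2$ shrinks $(s_e-s)(s_f-s)$ to at most $(k-2)^2$, saving an extra $\Theta(tk)$ in $|B|$, and the codegree $d(w,w')\le t$ for distinct $w,w'\in V(e)\cap V(f)$ (with $e,f$ already contributing two of these edges) tightens the overcounting in $|A|$ via inclusion–exclusion.

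Case~(iii) is the delicate one, since it must cover $s=1$ and $s_e=s_f=k$, where the savings from Cases~(i),~(ii) are absent. Here I would exploit the special vertex $w$: Observation~\ref{obs:degree} with $r=k$ applied to the subhypergraph of edges of size at least $k$ gives that at most $t(n-1)/(k-1)$ such edges contain $w$, while by hypothesis at most $t/(4\delta)$ edges of size at most $k-1$ contain $w$. This yields the refined bound $d(w)\le t(n-1)/(k-1)+t/(4\delta)$. To extract the needed saving from $|A|$, I would further decompose $A=A_1\cup A_2$ according to whether $V(g)\cap(V(e)\cup V(f))=\{w\}$ or not: by codegree of $w$ against the $2(k-1)$ other vertices of $V(e)\cup V(f)$, one has $|A_2|\le 2(k-1)(t-1)$; and $|A_1|$ is split by edge size, with the small part bounded by the hypothesis $t/(4\delta)$ and the large part bounded by a codegree count over pairs $(w,x)$ with $x\notin V(e)\cup V(f)$, using that each such $g\in A_1$ with $|V(g)|\ge k$ contributes at least $k-1$ vertices outside $V(e)\cup V(f)$.

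The hardest part will be the arithmetic in Case~(iii), specifically when $n$ is near the lower end $k^2-k+2$: here $|A|$ and $|B|$ are simultaneously close to their maxima, and the slack is only $O(t)$. The argument must carefully use that an edge $g$ contributing maximally to one of the bounds (i.e., $bc=1$ in the $|B|$-sum, or $|V(g)\cap(V(e)\cup V(f))|=\{w\}$ in the $|A|$-sum) must have size close to $\sqrt n$, so that the codegree relation $n\ge k^2-k+2$ feeds back into the outside-vertex counts in exactly the right way. The threshold $t/(4\delta)$ in the hypothesis is precisely calibrated so that the loss from including small edges through $w$ is absorbed by the gain from the sharpened count of large edges through $w$ given by Observation~\ref{obs:degree}.
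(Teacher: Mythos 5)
Your starting decomposition is exactly the paper's: splitting $N(e)\cap N(f)$ into the edges avoiding $V(e)\cap V(f)$ (bounded by $t(|V(e)|-s)(|V(f)|-s)$ via codegrees of pairs in $(V(e)\setminus V(f))\times(V(f)\setminus V(e))$) and the edges meeting it (bounded by $\sum_{w\in V(e)\cap V(f)}(d(w)-2)$), and your cases (i) and (ii) are in substance the paper's argument; the paper packages your expansion in the observation that $i\mapsto(|V(e)|-i)(|V(f)|-i)+(1+2\delta)ki$ is decreasing. One small caution in case (ii): you must keep the product bound as $(k-s)^2$ rather than freezing it at $(k-2)^2$, because the degree-sum term grows like $(1+2\delta)tks$ and $s$ may be as large as $\alpha k$; carrying the $s$-dependence exactly as you did in case (i) closes the computation, while the static $(k-2)^2$ bound does not.

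The genuine gap is case (iii). Your refined estimate $d(w)\le t(n-1)/(k-1)+t/(4\delta)$ is too weak. Since $|N(e)\cap N(f)|\le t(k-1)^2+d(w)-2$, the budget for $d(w)$ is $t\bigl(n-(k-1)^2\bigr)-1$; writing $n=k^2-k+2+j$ this is $t(k+1+j)-1$, whereas $t(n-1)/(k-1)=t\bigl(k+(1+j)/(k-1)\bigr)$, so the slack above the main term is only about $t(1+j)$. For $n$ near the lower end of the range ($j$ of constant order) your additive error $t/(4\delta)$ exceeds this slack by a factor of order $1/\delta$, so the bound does not give $t$-usefulness exactly where it is needed. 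The further decomposition you sketch does not repair this: the bound $|A_2|\le 2(k-1)(t-1)$ is by itself larger than the entire budget for $|A|$, and the "small" part of $A_1$ is again only bounded by $t/(4\delta)$. The missing idea, which is how the paper proceeds, is to bound $d(w)$ through the weighted count $\sum_{e'\ni w}(|V(e')|-1)=\sum_{v\ne w}d(v,w)\le t(n-1)$, exploiting that even the $m\le t/(4\delta)$ edges of size at most $k-1$ through $w$ still have size at least $(1-\delta)\sqrt n\ge(1-2\delta)k$: after dividing by $k-1$, each such edge inflates the estimate by only about $2\delta$, yielding $d(w)\le t(n-1)/(k-1)+2\delta m\le t(n-1)/(k-1)+t/2$. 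This error $t/2$ sits below the minimal slack (about $t(k-2)/(k-1)$), and one concludes $|N(e)\cap N(f)|-tn<-2$, hence $\le -3$ by integrality. This is also the real role of the threshold $t/(4\delta)$ in hypothesis (iii): it is calibrated so that $2\delta m\le t/2$, not so that $m$ itself is negligible.
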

\begin{proof}
Let $i \coloneqq |V(e) \cap V(f)| \in [\lfloor \alpha k \rfloor]$. Since $\cH$ has maximum codegree at most $t$, we have
\begin{equation}\label{eqn:common-nbr-hood-bound}
    |N(e) \cap N(f)| \leq t(|V(e)|-i)(|V(f)|-i) + \sum_{w \in V(e) \cap V(f)}(d(w) - 2)
\end{equation}
and by Observation~\ref{obs:degree},\COMMENT{in the last inequality below we used $i\frac{t(n-1)}{(1-\delta)\sqrt{n} - 1} \leq i\frac{t(k^2+k)}{(1-\delta)(k-1) - 1}$ which is at most $(1+2 \delta)k t i$ provided $\frac{(k+1)}{(1-\delta)(k-1) - 1} \leq 1+2 \delta $ but this holds since $1/k \ll \delta$}
\begin{equation}\label{eqn:common-degree-bound}
    \sum_{w \in V(e) \cap V(f)}(d(w) - 2) \leq i\frac{t(n-1)}{(1-\delta)\sqrt{n} - 1} \leq (1 + 2\delta) kt i.
\end{equation}
Let $f(x) \coloneqq (|V(e)|-x)(|V(f)|-x) + (1+2\delta)kx$ for $x \in [1, \alpha k]$.
Since $|V(e)|,|V(f)| \geq (1-\delta)\sqrt{n} \geq (1-2\delta)k$,
\begin{equation*}
    f'(x) = -(|V(e)| + |V(f)|) + 2x + (1 + 2\delta)k \leq -2(1 - 2\delta)k + 2\alpha k + (1 + 2 \delta)k < -k/2 < 0,
\end{equation*}
so $f(x)$ is maximised when $x$ is taken as small as possible. Note that $|N(e) \cap N(f)| \leq tf(i)$ by~\eqref{eqn:common-nbr-hood-bound} and~\eqref{eqn:common-degree-bound}.
    
If~\ref{na-prop:(k-1)-edge} holds, then $f(1) \leq (k - 1)(k - 2) + (1 + 2\delta)k \leq k^2 - 3k/2$, so $|N(e) \cap N(f)| \leq t f(i) \leq t f(1) \leq t(k^2 - 3k/2) \leq tn-3$.  Hence, $\{e,f\}$ is $t$-useful, as desired.

If~\ref{na-prop:intersection-bound} holds, then $i \geq 2$ and $f(2) \leq (k-2)^2 + (1+2\delta)2k \leq k^2 - 3k/2$, so $|N(e) \cap N(f)| \leq t f(i) \leq t f(2) \leq t(k^2 - 3k/2) \leq tn-3$, so again $\{e,f\}$ is $t$-useful.

Otherwise we may assume $i = 1$ and~\ref{na-prop:k-1-bound} holds. Let $V(e) \cap V(f) = \{ w \}$, and let $m$ be the number of edges of size at most $k-1$ containing $w$.  By~\ref{na-prop:k-1-bound}, $m \leq t/(4\delta)$. 
Here we need an estimate on $d(w)$ which is more precise than~\eqref{eqn:common-degree-bound}. 
By the definition of $m$ and the fact that $\cH$ has maximum codegree at most $t$, we have
\begin{equation*}
    m ((1-\delta) \sqrt{n} - 1) + (d(w) - m)(k-1) \leq \sum_{e'\: : \: w \in V(e')}(|V(e')|-1) = \sum_{v \in V(\cH) \setminus \{w \} } d_\cH(v,w) \leq t(n-1),
\end{equation*}
which implies
\begin{equation}\label{eqn:d_w}
    d(w) \leq \frac{t(n-1)}{k-1} + m \left ( 1 - \frac{(1-\delta)\sqrt{n} - 1}{k-1} \right )
    \leq \frac{t(n-1)}{k-1} + 2 \delta m \leq \frac{t(n-1)}{k-1} + \frac{t}{2}.
\end{equation}

Thus, by~\eqref{eqn:common-nbr-hood-bound} and~\eqref{eqn:d_w},
\begin{align*}
    |N(e) \cap N(f)| - tn & \leq t(|V(e)|-1)(|V(f)|-1) + d(w) - 2 - tn \\ 
    & \leq t\left((k-1)^2 + \frac{n-1}{k-1} + \frac{1}{2} - n \right) - 2 \\
    & = t\frac{(k-1)^3 - 1 - (k-2)n + (k-1)/2}{k-1} - 2 \\
    & < t\frac{(k-2)(k^2 - k + 1 - n + 3/4)}{k-1} - 2\\
    & < - 2,
\end{align*}
so $\{e, f\}$ is a $t$-useful pair, as desired.
\end{proof}

Now we prove Lemma~\ref{extremal-case-lemma}.

\begin{proof}[Proof of Lemma~\ref{extremal-case-lemma}]
Choose $\alpha > 0$ so that $0 < 1/n_0 \ll \delta \ll \alpha \ll 1$. 
If $e(\cH) < tn$, then $\chi_\ell'(\cH) < tn$ since every edge can be given a different colour. If $e(\cH) = tn$ and $\cH$ is not intersecting, then by Observation~\ref{lem:chromindex}, $\chi_\ell'(\cH) < tn$. Thus, we may assume that $e(\cH) > tn$. We will show that $\chi_\ell'(\cH) < tn$.  Let  $r\coloneqq e(\cH) - tn$, let $k$ be a positive integer such that
\begin{equation}\label{eqn:rangen}
   k^2 - k + 2 = (k-1)^2 + (k-1) + 2 \leq n \leq k^2 + k + 1.
\end{equation}

Now we divide $\cH$ into the following subsets. Let $A^- \coloneqq \{e \in \cH \: : \: |V(e)| \leq k-1 \}$, let $A^+ \coloneqq \{e \in \cH \: : \: |V(e)| = k \}$, let $A \coloneqq A^- \cup A^+$, and let $B \coloneqq \{e \in \cH \: : \: |V(e)| \geq k+1 \}$. 

By~\eqref{eqn:rangen} and since every edge in $\cH$ has size at least $(1-\delta)\sqrt{n}$, it follows that\COMMENT{To get the second inequality below, note that $\frac{k(k-1)}{n-1} \geq \frac{(k^2 + k + 1) - (2k+1)}{n} \geq \frac{n - 3\sqrt{n}}{n}$ by~\eqref{eqn:rangen}, since $k = (1+o(1))\sqrt{n}$.}\COMMENT{The last inequality uses $\left.\binom{(1 - \delta)\sqrt n}{2}\middle/\binom{n}{2}\right. \geq \frac{(1 - \delta)^2n - (1 - \delta)\sqrt n}{n^2}  \geq \frac{1 - 2\delta + \delta^2 - (1 - \delta)/\sqrt n}{n} \geq \frac{1 - 2\delta}{n}$.}
\begin{align*}
    \vol_{\cH}(B) & \geq |B| \frac{k(k+1)}{n(n-1)} \geq \frac{|B|}{n}\:\:,\:\:
    \vol_{\cH}(A^+) = |A^+|\frac{k(k-1)}{n(n-1)} \geq \frac{|A^+|}{n} \left ( 1 - \frac{3}{\sqrt{n}} \right ),\\
    \vol_{\cH}(A^-) &\geq |A^-| \frac{(1-\delta)\sqrt{n} \cdot ((1-\delta)\sqrt{n} - 1)}{n(n-1)} \geq |A^-| \frac{1-2\delta}{n}. 
\end{align*}

Combining these inequalities with the fact that $\vol_{\cH}(B) + \vol_{\cH}(A^+) + \vol_{\cH}(A^-) = \vol_{\cH}(\cH) \leq t$ (since $\cH$ has maximum codegree at most $t$), we obtain
\begin{align}\label{eqn:abn}
    r = |A^-| + |A^+| + |B| - tn \leq \frac{3|A^+|}{\sqrt{n}} + 2 \delta |A^-| \leq 2\delta |A|.
\end{align}

If $|A| \leq t/(2\delta)$, then by Proposition~\ref{na-prop0}, for every distinct intersecting $e,f \in A$, $\{e, f\}$ is $t$-useful. Moreover, by~\eqref{eqn:abn}, $|A| \ge r/(2 \delta) \ge 6r$.  Thus, $A$ satisfies the conditions of Proposition~\ref{lem:chromindex1}\ref{hypo:A-bound}, so  $\chi'_\ell(\cH) < tn$, as desired. Now we may assume that
\begin{align}\label{eqn:bounda}
    |A| > t/(2\delta).
\end{align}

If $|A^-| \leq t/(4\delta)$, then by Proposition~\ref{na-prop}\ref{na-prop:k-1-bound}, for every distinct intersecting $e,f \in A$ with $|V(e) \cap V(f)| \leq \alpha k$, we have that $\{e, f\}$ is $t$-useful.  Moreover,~\eqref{eqn:abn} and~\eqref{eqn:bounda} together imply that $r = |A| + |B| - tn \leq 2\delta|A| \leq |A|/6 - t/(3\alpha^2)$. Thus, $A$ satisfies the conditions of Proposition~\ref{lem:chromindex1}\ref{hypo:A-bound-intersect}, so  $\chi'_\ell(\cH) < tn$, as desired.
Hence, we may assume that
\begin{equation}\label{eqn:bounda-}
    |A^-| > t/(4\delta).   
\end{equation}

Thus if $|A^+| \leq \alpha \sqrt{n} |A^-|$, then by~\eqref{eqn:abn} and~\eqref{eqn:bounda-}, we have $r = |A^-| + |A^+ \cup B| - tn \leq (3\alpha + 2\delta) |A^-| \leq |A^-|/6 - t/(3\alpha^2)$, so $|A^-| \geq 6r + 2t / \alpha^2$. 
Moreover, by Proposition~\ref{na-prop}\ref{na-prop:(k-1)-edge}, for every distinct intersecting $e, f \in A^-$ with $|V(e) \cap V(f)| \le \alpha k$, the pair $\{ e,f \}$ is $t$-useful. Hence, $A^-$ satisfies the conditions of Proposition~\ref{lem:chromindex1}\ref{hypo:A-bound-intersect}, so $\chi'_\ell(\cH) < tn$, as desired.
Thus we assume
\begin{equation}\label{eqn:bounda+}
    |A^+| > \alpha \sqrt{n}|A^-| \overset{\eqref{eqn:bounda-}}{> }\frac{\alpha t}{4\delta} \sqrt{n}.
\end{equation}

Now let $L\coloneqq L(\cH)$ be the line graph of $\cH$, and let $N$ be a maximal matching in $\overline L$.  We may assume $|N| \leq r$, as otherwise the lemma follows from Observation~\ref{lem:chromindex}.  Combining this inequality with~\eqref{eqn:abn} and~\eqref{eqn:bounda+}, we have
\begin{equation}\label{eqn:sizeN}
  |N| \leq r \leq \frac{3|A^+|}{\sqrt{n}} + 2 \delta |A^-| < \frac{3|A^+|}{\sqrt{n}} + \frac{2 \delta}{\alpha \sqrt{n}} |A^+| <  \frac{5|A^+|}{\sqrt{n}}.
\end{equation}

\begin{claim}\label{claim:useful-pairs}
    There are $2r + 2$ distinct $e_1, \dots, e_{2r + 2} \in A^{+} \setminus V(N)$ such that for each $i \in [r+1]$,
\begin{enumerate}[(i)]
\item\label{pairwise-small-intersect} $|V(e_{2i - 1})\cap V(e_{2i})| \leq \alpha k$ and 

\item\label{extremal-useful-pairs} there exists $w_i \in V(e_{2i - 1})\cap V(e_{2i})$ such that $|\{ e \in A^- : w_i \in V(e) \}| < t/(4\delta)$.
\end{enumerate}
\end{claim}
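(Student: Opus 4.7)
The plan is to construct the $2r+2$ edges iteratively, using that the set $B \coloneqq \{w \in V(\cH) : |\{e \in A^- : w \in V(e)\}| \ge t/(4\delta)\}$ of ``bad'' vertices is small. A double counting of incidences between $B$ and $A^-$ yields $|B| \cdot t/(4\delta) \le (k-1)|A^-|$, and combining with $|A^-| < |A^+|/(\alpha \sqrt n)$ from \eqref{eqn:bounda+} and $k \le \sqrt n + 1$ gives $|B| = O(\delta|A^+|/(\alpha t))$.

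Observation~\ref{obs:degree} applied to the size-$k$ subhypergraph $A^+$ then gives $d_{A^+}(w) \le t(n-1)/(k-1) \le t(k+2)$ for every $w$, so $\sum_{e \in A^+}|V(e) \cap B| = \sum_{w \in B} d_{A^+}(w) \le |B| \cdot t(k+2) = O(\delta k|A^+|/\alpha)$. A Markov-type argument then produces a set $A^+_\star \subseteq A^+$ of edges whose bad-vertex count lies below a carefully chosen threshold $c$, with $|A^+_\star| \ge |A^+|/2$.

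The pairs are then picked greedily: at iteration $i$, first choose $e_{2i-1} \in A^+_\star \setminus V(N) \setminus \{e_1, \ldots, e_{2i-2}\}$, which is possible because $|V(N)| + 2i - 2 = O(|A^+|/\sqrt n)$ by \eqref{eqn:sizeN}. Then choose $e_{2i} \in A^+ \setminus V(N) \setminus \{e_1,\ldots,e_{2i-1}\}$ satisfying conditions (i) and (ii). By Observation~\ref{obs:intersect}, the number of $f$ violating (i) is at most $2t/\alpha^2$, while the number of $f$ violating (ii) -- those with $V(e_{2i-1}) \cap V(f) \subseteq B$ -- is at most $\sum_{w \in V(e_{2i-1}) \cap B} d_{A^+}(w) \le c \cdot t(k+2)$, since $e_{2i-1} \in A^+_\star$.

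The main obstacle is calibrating $c$ so that both $|A^+_\star| \ge |A^+|/2$ and the forbidden count $c \cdot t(k+2) + 2t/\alpha^2$ stays strictly below the remaining supply of edges at every iteration. A naive threshold $c = \alpha k$ yields a forbidden count of order $\alpha tn$, while \eqref{eqn:bounda+} only guarantees $|A^+| \ge \alpha t \sqrt n/(4\delta)$. I expect to close this gap either by a sharper concentration estimate for $|V(e) \cap B|$ (for instance, via a second-moment argument that uses the codegree $t$ to control covariances between vertex incidences in $A^+$-edges) or by a structural dichotomy exploiting Proposition~\ref{na-prop}\ref{na-prop:intersection-bound}: in the subcase where many pairs in $A^+ \setminus V(N)$ have intersection of size at least two, the rarity of bad vertices ensures that such multi-vertex intersections typically contain a good vertex, while in the complementary subcase singleton intersections dominate and good vertices are abundant enough to furnish each $w_i$.
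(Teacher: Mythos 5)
Your opening steps (the bad-vertex set $B$, the incidence count giving $|B|\le 8\delta|A^-|\sqrt n/t$, and the use of the maximality of $N$) match the paper, but the way you then select the pairs creates a quantitative gap that you yourself flag and that your two proposed fixes do not close. The problem with ``fix $e_{2i-1}$ first, then exclude partners $f$ with $V(e_{2i-1})\cap V(f)\subseteq B$'' is that the supply $|A^+|$ is only guaranteed to be of order $\alpha t\sqrt n/\delta$ by \eqref{eqn:bounda+}, while the mean of $|V(e)\cap B|$ over $e\in A^+$ can genuinely be as large as order $|B|\,tk/|A^+|\approx \delta\sqrt n/\alpha$ (this is not a fluctuation issue, so a second-moment or concentration refinement cannot help). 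Hence any Markov threshold $c$ that retains half of $A^+$ must be at least of that order, and the resulting forbidden count $c\cdot t(k+2)\approx \delta tn/\alpha$ exceeds $|A^+|$ by a factor of order $\delta^2\sqrt n/\alpha^2\to\infty$. Your alternative ``dichotomy'' via Proposition~\ref{na-prop}\ref{na-prop:intersection-bound} also does not prove the claim as stated: a pair with intersection of size at least two is $t$-useful, but the claim specifically demands a vertex $w_i\in V(e_{2i-1})\cap V(e_{2i})$ lying in fewer than $t/(4\delta)$ edges of $A^-$, which such pairs need not supply.

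The paper's proof avoids this calibration problem by reversing the order of choice: it first discards only the genuinely bad edges $A^+_{\rm bad}=\{e\in A^+ : |V(e)\cap V_{\rm bad}|\ge \sqrt{\delta n}\}$ (a Markov bound with this \emph{large} threshold, combined with Observation~\ref{obs:degree} and \eqref{eqn:bounda+}, gives $|A^+_{\rm bad}|<|A^+|/20$), and then at each iteration it chooses the \emph{vertex} first. Averaging the incidences of the remaining set $A_i$ over $V(\cH)\setminus V_{\rm bad}$ (each $e\in A_i$ has at least $k-\sqrt{\delta n}$ good vertices) yields a good vertex $w_i$ with $d_{A_i}(w_i)>2t/\alpha^2$, and Observation~\ref{obs:intersect} then gives two edges of $A_i$ through $w_i$ with intersection at most $\alpha k$. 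Because the pair is selected through a common good vertex, condition (ii) holds automatically, and condition (i) only requires $2t/\alpha^2+1$ edges through $w_i$ --- a threshold independent of $n$ --- which is exactly what your edge-first strategy lacks. If you want to salvage your write-up, replace the ``choose $e_{2i-1}$, then count forbidden partners'' step by this vertex-first selection; the rest of your counting (bounds on $|B|$, $|V(N)|$, and the use of \eqref{eqn:sizeN}) then goes through essentially as in the paper.
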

\begin{claimproof}
Let $V_{\rm bad} \coloneqq \{ x \in V(\cH) \: : \: |\{e \in A^- \: : \: x \in V(e) \}| \geq t/(4\delta) \}$. Let $\mathcal{P} \coloneqq \{(w,e) \: : \: \textrm{$w \in V_{\rm bad}$, $w \in V(e)$, and $e \in A^-$}\}$, 
and note that (by~\eqref{eqn:rangen}), $|V_{\rm bad}| \cdot t/(4\delta) \leq |\mathcal{P}| \leq (k-1) |A^-| \leq 2 \sqrt{n} |A^-|$, which implies
\begin{equation}\label{claim:nobad}
    |V_{\rm bad}| \leq 8 \delta |A^-| \frac{\sqrt{n}}{t}.
\end{equation}
Let $A_{\rm bad}^+ \coloneqq \{e \in A^+ \: : \: |V(e) \cap V_{\rm bad}| \geq \sqrt{\delta n} \}$. Let $\mathcal{Q} \coloneqq \{(w,e) \: : \: \textrm{$w \in V_{\rm bad}$, $w \in V(e)$, and $e \in A_{\rm bad}^+$}\}$. Thus, by Observation~\ref{obs:degree}, we have\COMMENT{we used $k-1 \ge \sqrt{n}-2$ below}
\begin{equation*}
|A_{\rm bad}^+| \sqrt{\delta n} \leq |\mathcal Q| \leq |V_{\rm bad}| \frac{t(n-1)}{k - 1} \overset{\eqref{eqn:rangen},\eqref{claim:nobad}}{\leq} 16 \delta n |A^-|,
\end{equation*}
so we have
\begin{equation}\label{eqn:bounda*}
    |A_{\rm bad}^+| \leq 16 \sqrt{\delta n} |A^-| \overset{\eqref{eqn:bounda+}}{\leq} \frac{16  \sqrt{\delta}}{\alpha}|A^+| < \frac{|A^+|}{20}.
\end{equation}
Thus,\COMMENT{by~\eqref{eqn:sizeN}, $|V(N)| = 2 |N| < \frac{10|A^+|}{\sqrt{n}}$}
\begin{equation}\label{eqn:bounda+_2}
|A^+ \setminus (A_{\rm bad}^+ \cup V(N))| \geq |A^+| - |A_{\rm bad}^+| - |V(N)| \overset{\eqref{eqn:sizeN},\eqref{eqn:bounda*}}{\geq} \frac{9|A^+|}{10}.
\end{equation}

Now we iterate the following procedure for $i \in [r+1]$. Suppose we have found distinct $e_1 , \dots , e_{2(i-1)} \in A^+ \setminus (A_{\rm bad}^+ \cup V(N))$ such that for each $j \in [i-1]$, $|V(e_{2j-1}) \cap V(e_{2j})| \leq \alpha k$ and $(V(e_{2j - 1})\cap V(e_{2j})) \setminus V_{\rm bad} \ne \varnothing$. Let
\begin{equation*}
    A_i \coloneqq A^+ \setminus \left(A_{\rm bad}^+ \cup V(N) \cup \{e_1 , \dots , e_{2(i-1)} \}\right).
\end{equation*}

Now we aim to find a pair $\{ e_{2i-1} , e_{2i} \} \subseteq A_i$ such that $|V(e_{2i-1}) \cap V(e_{2i})| \leq \alpha k$, and $(V(e_{2i - 1})\cap V(e_{2i})) \setminus V_{\rm bad} \ne \varnothing$. Note that
\begin{equation}
\label{eq:size:Si}
    |A_i| \geq |A^+ \setminus (A_{\rm bad}^+ \cup V(N))| - 2r \overset{\eqref{eqn:sizeN},\eqref{eqn:bounda+_2}}{\geq} \frac{9|A^+|}{10} - \frac{10|A^+|}{\sqrt{n}} \overset{\eqref{eqn:bounda+}}{\geq} \frac{\alpha t}{10 \delta} \sqrt{n}.
\end{equation}

Let $\mathcal{T}_i \coloneqq \{(w,e) : e \in A_i, \:w \in V(e)\setminus V_{\rm bad}\}$. Since $|V(e) \cap V_{\rm bad}| \le \sqrt{\delta n}$ for each $e \in A_i \subseteq A^{+} \setminus A_{\rm bad}^+$, we deduce the following. (Also note that $V(\cH) \setminus V_{\rm bad} \ne \varnothing$ since $|V(e) \setminus V_{\rm bad}| \geq (1-\delta-\sqrt{\delta})\sqrt{n}$ for each $e \in A_i$, and $A_i \ne \emptyset$ by \eqref{eq:size:Si}.)
\begin{align*}
    \frac{1}{|V(\cH) \setminus V_{\rm bad}|}\sum_{w \in V(\cH) \setminus V_{\rm bad}} d_{A_i}(w) &= \frac{|\mathcal{T}_i|}{|V(\cH) \setminus V_{\rm bad}|} \geq \frac{|\mathcal{T}_i|}{n}
    = \frac{1}{n} \sum_{e \in A_i} |V(e) \setminus V_{\rm bad}| \\&\geq \frac{|A_i| (k - \sqrt{\delta n})}{n} \overset{\eqref{eq:size:Si},\eqref{eqn:rangen}}{\geq} \frac{\alpha t \sqrt{n}/(10\delta)  \cdot (\sqrt{n}/2)}{n} \\
    & = \frac{\alpha t}{20 \delta} > \frac{2t}{\alpha^2}.
\end{align*}

Thus, there exists a vertex $w_i \in V(\cH) \setminus V_{\rm bad}$ with $d_{A_i}(w_i) > 2t/\alpha^2$, so there exists a pair $\{e_{2i-1},e_{2i} \} \subseteq A_i$ such that $w_i \in V(e_{2i-1}) \cap V(e_{2i})$ and $|V(e_{2i-1}) \cap V(e_{2i})| \leq \alpha k$ by Observation~\ref{obs:intersect}.\COMMENT{Let $e$ be any edge in $A_i$ that contains $w_i$.  By Observation~\ref{obs:intersect}, at most $2t / \alpha^2 - 1$ edges $f\neq e$ satisfy $|V(e) \cap V(f)| \geq \alpha k$, so there is an edge $f \in A_i$ containing $w_i$ such that $|V(e) \cap V(f)| < \alpha k$.} Moreover, since $w_i \in V(\cH) \setminus V_{\rm bad}$, $|\{ e \in A^- : w_i \in V(e) \}| < t/(4\delta)$ by the definition of $V_{\rm bad}$.

Thus, iterating the above procedure allows us to find distinct $e_1 , \dots , e_{2r+2} \in A^+ \setminus V(N)$ satisfying~\ref{pairwise-small-intersect} and~\ref{extremal-useful-pairs} of Claim~\ref{claim:useful-pairs}, as desired.
\end{claimproof}

Let $e_1 , \dots , e_{2r+2} \in A^+ \setminus V(N)$ be the distinct edges obtained from Claim~\ref{claim:useful-pairs}. Since $N$ is maximal, $\cH\setminus V(N)$ is a clique in $L$, so $e_1 , \dots , e_{2r+2}$ are pairwise intersecting. Moreover, by Proposition~\ref{na-prop}\ref{na-prop:k-1-bound}, $\{e_{2i - 1}, e_{2i}\}$ is $t$-useful for each $i \in [r+1]$. Thus by Proposition~\ref{lem:chromindex0}, we deduce that $\chi_\ell'(\cH) < tn$, completing the proof of Lemma~\ref{extremal-case-lemma}.
\end{proof}

\section{Proofs of Theorems~\ref{main-thm} and~\ref{main-thm-2}}\label{section:proof}
In this section, we prove Theorems~\ref{main-thm} (our main result) and \ref{main-thm-2} (the stability version). For this, we first introduce tools for colouring locally sparse graphs and hypergraphs in Section~\ref{subsec:probColouringPrelim}. In Section~\ref{subsec:Overviewofproofs}, we give an overview of how we combine these tools with the results of the previous sections to prove Theorems~\ref{main-thm} and~\ref{main-thm-2}. We then  prove Theorem~\ref{main-thm-2} in Section~\ref{subsec:proofofstability} and prove Theorem~\ref{main-thm} (whose proof relies on Theorem~\ref{main-thm-2}) in Section~\ref{subsec:proofofmainthm}.

\subsection{Probabilistic hypergraph colouring preliminaries}
\label{subsec:probColouringPrelim}
We will need the following fundamental result of
Kahn~\cite{kahn1996asymptotically}, which asymptotically determines the list chromatic index of a hypergraph of small codegree. A precursor of Theorem~\ref{thm:kahn} was his main tool in proving an asymptotic version of the Erd\H{o}s-Faber-Lov\'asz Conjecture~\cite{kahn1992coloring}.
We use Theorem~\ref{thm:kahn} to colour the ‘small’ edges of $\cH$.

\KahnTheorem*

We also use the following result of Molloy and Reed~\cite[Theorem~10.5]{MR02} on the list chromatic index of locally sparse graphs. 
The study of colouring locally sparse graphs was initiated by Molloy and Reed~\cite{MR1997} in their work on a conjecture of Erd\H{o}s and Ne\v{s}et\v{r}il on strong edge-colouring graphs of bounded maximum degree and in Reed's~\cite[Theorem 1]{reed1998} work on his $\omega$, $\Delta$, and $\chi$ conjecture~\cite[Conjecture 1]{reed1998}.
This result has recently been improved in~\cite{BPP18, BJ15, HdVK20}, but we do not need these stronger versions.

\begin{theorem}[Molloy and Reed~\cite{MR02}]\label{local-sparsity-theorem}
  Let $0 < 1/\Delta \ll \zeta \leq 1$.  Let $G$ be a graph with $\Delta(G) \leq \Delta$.  If every $v\in V(G)$ satisfies $e(G[N(v)]) \leq (1 - \zeta)\binom{\Delta}{2}$, then $\chi_\ell(G) \leq (1 - \zeta/e^6)\Delta$.
\end{theorem}

The following lemma will be used to show that the line graph of $\cW$, where $\cW$ is obtained from an application of Lemma~\ref{lemma:good-ordering} in the proof of Theorem~\ref{main-thm-2}, is locally sparse. Then we use Theorem~\ref{local-sparsity-theorem} to colour the edges of $\cW$ with $(1 - o(1)) tn$ colors.

\begin{lemma}\label{local-sparsity-lemma}
  Let $0 < 1/n_0, 1/r \ll \alpha \ll \zeta < 1$, let $n, t \in \mathbb N$ where $n \geq n_0$, and suppose $r \leq (1 - \zeta)\sqrt n$.  
  If $\cH$ is an $n$-vertex hypergraph with 
  $\Delta_2(\cH) \leq t$ and 
  $r \leq \antirank(\cH) \leq \rank(\cH) \leq (1 + \alpha)$r,
  then $\Delta(L) \leq \Delta$ and $e(L[N(e)]) \leq ( 1 - \frac{5 \zeta}{6} ) \binom{\Delta}{2}$ for each $e \in \cH$, where $\Delta \coloneqq (1 + \alpha)trn/(r - 1)$ and $L\coloneqq L(\cH)$.
\end{lemma}
\begin{proof}
  Since every edge of $\cH$ has size at least $r$ and $\cH$ has maximum codegree at most $t$, we have $d_{\cH}(v) \leq tn/(r-1)$ for every $v \in V(\cH)$ by Observation~\ref{obs:degree}. Thus, for every $e \in \cH$, we have $|N(e)| \leq \sum_{v \in V(e)}d_\cH(v) \leq (1+\alpha)trn/(r-1)$, hence, $\Delta(L) \leq \Delta$.  
  
  Now we aim to estimate $e(L[N(e)])$. First, partition $N(e)$ into $N_1 \coloneqq \{f \in N(e) : |V(e) \cap V(f)| \leq \alpha |V(e)|\}$ and $N_2 \coloneqq N(e) \setminus N_1$.  
  Since $\cH$ has maximum codegree at most $t$, for each $f \in N_1$, we have
  \begin{align*}
      |N(e) \cap N(f)| & \leq \sum_{v \in V(e) \cap V(f)} d_\cH(v) + t|V(e) \setminus V(f)||V(f) \setminus V(e)|\\ 
      & \leq \alpha(1 + \alpha)r \cdot \frac{tn}{r - 1}  + t(1 + \alpha)^2 r^2 \leq (2 \alpha + (1 + 3 \alpha)(1 - \zeta) ) tn\\
      & \leq \left( 1 - \frac{9\zeta}{10} \right) tn.
  \end{align*}
  By Observation~\ref{obs:intersect}, there are at most $2t/\alpha^2$ edges in $N_2$.
  Using this fact and the above inequality, we have\COMMENT{in the last inequality below we used $(1 - \frac{9\zeta}{10}) \frac{tn \Delta}{2} = (1 - \frac{9\zeta}{10}) \frac{ \Delta}{2} \cdot \Delta \frac{r-1}{r(1+ \alpha)} \le (1 - \frac{9\zeta}{10}) (\binom{\Delta}{2}+ \Delta) \leq (1 - \frac{8 \zeta}{9}) \binom{\Delta}{2}$}
  \begin{align*}
      e(L[N(e)])
      &\leq \sum_{f \in N_1} \frac{1}{2}|N(e) \cap N(f)| + \sum_{f \in N_2}|N(e) \cap N(f)|\\
      & \leq \left (1 - \frac{9\zeta}{10} \right ) \frac{tn}{2}|N_1| + |N_2|\cdot\Delta \leq \left (1 - \frac{9\zeta}{10} \right ) \frac{tn \Delta}{2} + \frac{2t}{\alpha^2} \cdot \Delta  \leq \left ( 1 - \frac{5 \zeta}{6} \right ) \binom{\Delta}{2},
  \end{align*}
  as desired.
\end{proof}


\begin{corollary}\label{sparsity-corollary}
  Let $0 < 1/n_0, 1/r \ll \alpha \ll \zeta < 1$, let $n, t \in \mathbb N$ where $n \geq n_0$, and suppose $r \leq (1 - \zeta)\sqrt n$.  
  If $\cH$ is an $n$-vertex hypergraph with 
  $\Delta_2(\cH) \leq t$ and $r \leq \antirank(\cH) \leq \rank(\cH) \leq (1 + \alpha)$r,
  then $\chi'_\ell(\cH) \leq (1 - \zeta/500)tn$.
\end{corollary}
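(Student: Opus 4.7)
The plan is to apply Theorem~\ref{local-sparsity-lemma} to the line graph $L(\cH)$, whose list chromatic number equals $\chi'_\ell(\cH)$. The hypothesis $r \leq (1-\zeta)\sqrt n$ is exactly what forces the edges of $\cH$ to be small enough that $L(\cH)$ is locally sparse in every vertex neighbourhood.

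First, we derive an upper bound on $\Delta(L(\cH))$ slightly above $tn$. Fix $e \in \cH$ and set $s \coloneqq |V(e)| \in [r,(1+\alpha)r]$. Summing the inequality $\sum_{f\ni u}(|V(f)|-1) \leq t(n-1)$ over $u \in V(e)$ yields $(r-1)|N(e)| \leq \sum_{f \in N(e)}|V(f)\cap V(e)|(|V(f)|-1) \leq st(n-1)$, so $\Delta(L(\cH)) \leq \Delta \coloneqq \lceil (1+2\alpha)tn\rceil$.

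Next, for each fixed $e \in \cH$ we bound $e(L(\cH)[N(e)])$ by splitting its edges (unordered intersecting pairs $\{f_1,f_2\}\subseteq N(e)$) into \emph{Type 1}, where $V(f_1)\cap V(f_2)\cap V(e)\ne\varnothing$, and \emph{Type 2}, where $V(f_1)\cap V(f_2)\ne\varnothing$ but $V(f_1)\cap V(f_2)\cap V(e)=\varnothing$. Using Observation~\ref{obs:degree}, the number of Type 1 pairs is at most $\sum_{u\in V(e)}\binom{d(u)}{2} \leq s\binom{t(n-1)/(r-1)}{2} = O(t^2 n^2/r)$, which is $o(\binom{\Delta}{2})$ since $1/r \ll \alpha$. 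For Type 2, each such pair admits distinct $u,w\in V(e)$ and $v\in V(\cH)\setminus V(e)$ with $\{u,v\}\subseteq V(f_1)$ and $\{w,v\}\subseteq V(f_2)$; since $\cH$ has codegree at most $t$, each such triple $(u,w,v)$ yields at most $t^2$ ordered pairs $(f_1,f_2)$, so the number of unordered Type 2 pairs is at most $s(s-1)(n-s)t^2/2 \leq (1+\alpha)^2(1-\zeta)^2 t^2n^2/2$, using $r\leq (1-\zeta)\sqrt n$.

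Combining these bounds, $e(L(\cH)[N(e)]) \leq \bigl((1-\zeta)^2(1+O(\alpha))+o(1)\bigr)\binom{\Delta}{2} \leq (1-\zeta)\binom{\Delta}{2}$, where the final inequality holds because $\zeta(1-\zeta) > 0$ and the hierarchy $\alpha \ll \zeta < 1$ permits us to assume $\alpha \ll \zeta(1-\zeta)$. Theorem~\ref{local-sparsity-lemma} then gives $\chi'_\ell(\cH) = \chi_\ell(L(\cH)) \leq (1-\zeta/e^6)\Delta$, and since $e^6 < 500$ and $\alpha \ll \zeta$, $(1-\zeta/e^6)(1+2\alpha) \leq 1-\zeta/500$, as required.

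The main obstacle is the Type 2 calculation: we must ensure that edges of $N(e)$ are sufficiently spread out beyond $V(e)$ so that the local-density ratio drops below $1$. This is precisely where the hypothesis $r \leq (1-\zeta)\sqrt n$ enters, via $r^2/n \leq (1-\zeta)^2 < 1$; without this strict gap, the naive bound on Type 2 pairs would only be comparable to $\binom{\Delta}{2}$ rather than a $(1-\zeta)$-fraction of it.
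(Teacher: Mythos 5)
Your overall route is the same as the paper's: bound $\Delta(L(\cH))$ by roughly $(1+O(\alpha))tn$ using the codegree condition, verify that each neighbourhood in the line graph spans at most a $(1-\Omega(\zeta))$-fraction of $\binom{\Delta}{2}$ edges, and apply Theorem~\ref{local-sparsity-lemma}. Your Type~1/Type~2 split is just slightly different bookkeeping for the same local-sparsity computation (the paper instead splits $N(e)$ according to whether $|V(e)\cap V(f)|\le\alpha|V(e)|$, handling the few exceptional edges via Observation~\ref{obs:intersect}); your maximum-degree bound, the Type~1 count via $\sum_{u\in V(e)}\binom{d(u)}{2}$ and Observation~\ref{obs:degree}, and the Type~2 count via triples $(u,w,v)$ and the codegree bound are all correct.

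The gap is the final absorption step. To conclude $e(L(\cH)[N(e)])\le(1-\zeta)\binom{\Delta}{2}$ you need $(1-\zeta)^2\cdot O(\alpha)+O(1/r)+O(1/(tn))\le\zeta(1-\zeta)$, and you justify this by saying the hierarchy ``permits us to assume $\alpha\ll\zeta(1-\zeta)$''. Under the paper's $\ll$-convention this is not permitted: $\alpha\ll\zeta$ only guarantees $\alpha\le f(\zeta)$ for some non-decreasing $f>0$, and since $\zeta(1-\zeta)\to 0$ as $\zeta\to 1^-$ while $f(\zeta)\ge f(1/2)>0$ there, no legitimate choice of hierarchy functions forces $\alpha$ (or $1/r$) below $\zeta(1-\zeta)$; the statement quantifies over all $\zeta<1$, so nothing stops $1-\zeta$ from being far smaller than $\alpha$ and $1/r$. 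In that regime your intermediate inequality is not merely unjustified but false in general: the Type~1 pairs alone can number $\Theta(t^2n^2/r)$, which exceeds $(1-\zeta)\binom{\Delta}{2}$ as soon as $1-\zeta\ll 1/r$. The repair is easy and stays within your argument: do not aim for $(1-\zeta)\binom{\Delta}{2}$ but for, say, $\bigl(1-\tfrac{9}{10}\zeta\bigr)\binom{\Delta}{2}$; since $1-\tfrac{9}{10}\zeta-(1-\zeta)^2=\zeta\bigl(\tfrac{11}{10}-\zeta\bigr)\ge\zeta/10$, your estimates give this bound using only $\alpha,1/r,1/n_0\ll\zeta$, and Theorem~\ref{local-sparsity-lemma} with parameter $\tfrac{9}{10}\zeta$ still yields $\bigl(1-\tfrac{9\zeta}{10e^6}\bigr)\Delta\le(1-\zeta/500)tn$ because $\tfrac{9}{10e^6}>\tfrac{1}{500}$, with the $(1+2\alpha)$ factor absorbed exactly as you already do. This is precisely why the paper itself only proves a local density of $\bigl(1-\tfrac{5\zeta}{6}\bigr)\binom{\Delta}{2}$ rather than $(1-\zeta)\binom{\Delta}{2}$.
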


We need the following proposition in the proofs of Theorems~\ref{main-thm} and~\ref{main-thm-2}.
\begin{proposition}\label{lemma:reserve-colors}
    Let $1 / n_0 \ll \xi < 1$, let $\gamma \in [0, 1]$, and let $t, n \in \mathbb N$ where $n \geq n_0$.  If $\cH$ is an $n$-vertex hypergraph with $\Delta_2(\cH) \leq t$
    and $C$ is a list assignment for $\cH$ satisfying $|C(e)| \geq tn / 2$ for every $e \in \cH$, then there exists a set $R \subseteq \bigcup_{e \in \cH}C(e)$ such that every $e \in \cH$ satisfies $|R \cap C(e)| = (\gamma \pm \xi)|C(e)|$.
\end{proposition}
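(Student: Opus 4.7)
The plan is a direct random-sampling argument: construct $R$ by including each colour $c \in \bigcup_{e \in \cH} C(e)$ in $R$ independently with probability $\gamma$, and then verify the desired property via a Chernoff/Hoeffding estimate together with a union bound over the edges.

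For a fixed $e \in \cH$, $|R \cap C(e)|$ is a sum of $|C(e)|$ independent Bernoulli$(\gamma)$ random variables of mean $\gamma|C(e)|$, so Hoeffding's inequality yields
\[
\Pr\!\left[\bigl||R\cap C(e)| - \gamma|C(e)|\bigr| > \xi|C(e)|\right] \;\le\; 2\exp\!\bigl(-2\xi^{2}|C(e)|\bigr) \;\le\; 2\exp\!\bigl(-\xi^{2}tn\bigr),
\]
where the final inequality uses the hypothesis $|C(e)| \ge tn/2$.

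It then remains to union bound over $e \in \cH$. The codegree assumption gives the volume inequality $\sum_{e \in \cH}\binom{|V(e)|}{2} \le t\binom{n}{2}$, so the number of edges of size at least two is at most $t\binom{n}{2} \le n^{3}$; edges with $|V(e)| \le 1$ impose trivial constraints and can be treated separately. In particular, the number of distinct lists $C(e)$ arising is at most polynomial in $n$ in any setting where the proposition is used. Since $1/n_{0} \ll \xi$, the exponent $\xi^{2}tn$ dominates $\log e(\cH)$ for $n \ge n_{0}$, so the total failure probability $e(\cH)\cdot 2\exp(-\xi^{2}tn)$ is strictly less than $1$, and hence a suitable $R$ exists.

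The only genuine obstacle is securing a polynomial (or even subexponential) bound on the number of constraints that must be controlled simultaneously in the union bound; this is immediate from the codegree hypothesis via the volume inequality above, and the Chernoff tail comfortably absorbs the resulting factor.
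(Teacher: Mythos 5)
Your proposal is correct and is essentially the paper's own argument: the paper also forms $R$ by including each colour independently with probability $\gamma$, applies a Chernoff-type bound to each $|R\cap C(e)|$ (with failure probability $2\exp(-\Omega(\xi^2 tn))$ via $|C(e)|\ge tn/2$), and finishes with a union bound over the at most $tn^2$ edges, which is the same counting you extract from the codegree/volume inequality. The only differences are cosmetic bookkeeping in the union bound (your intermediate inequality $t\binom{n}{2}\le n^3$ is unnecessary, and in fact can fail for large $t$, but this is harmless since the exponent $\xi^2 tn$ already dominates $\log(tn^2)$), so the two proofs coincide in substance.
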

This proposition can be easily proved by choosing each colour in $\bigcup_{e \in \cH}C(e)$ in $R$ randomly and independently with probability $\gamma$ and showing that $R$ satisfies the lemma with high probability, so we omit the details.\COMMENT{\begin{proof}
 Include every colour $c \in \bigcup_{e \in \cH}C(e)$ in $R$ randomly and independently with probability $\gamma$.  By a standard application of the Chernoff bound, every $e \in \cH$ satisfies $|R \cap C(e)| = (\gamma \pm \xi) |C(e)|$ with probability at least $1 - 2\exp(- \Omega(|C(e)|)) \geq 1 - 2\exp(-\Omega(n))$.  Since $e(\cH) \leq tn^2$, the result follows by the Union Bound.
\end{proof}}

\subsection{Overview of the proofs of Theorems~\ref{main-thm} and~\ref{main-thm-2}}
\label{subsec:Overviewofproofs}

We now have all of the ingredients needed to prove Theorems~\ref{main-thm} and~\ref{main-thm-2}.  For the reader's convenience, we provide a brief overview of both proofs in the case when all edges have the same list of available colours; that is, we overview the proofs of both results where we replace ``$\chi'_\ell$" with ``$\chi'$".

Let constants $n_0$, $r_0$, $r_1$, $\mu$ $\delta$, $\gamma$, $\eps$ satisfy
\begin{equation*}
    0 < 1 / n_0 \ll 1 / r_0 \ll 1 / r_1 \ll \mu \ll \delta \ll \gamma \ll \eps \ll 1.
\end{equation*}
In both proofs, we decompose $\cH$ into spanning subhypergraphs $\cH_{\mathrm{sml}} \coloneqq \{ e \in \cH \: : \: |V(e)| \leq r_1 \}$, $\cH_{\mathrm{med}} \coloneqq \{ e \in \cH \: : \: r_1 < |V(e)| \leq r_0 \}$, and $\cH_{\mathrm{lrg}} \coloneqq \{ e \in \cH \: : \: |V(e)| > r_0 \}$, we colour $\cH_{\mathrm{lrg}}$ before colouring $\cH_{\mathrm{sml}}$, and we use disjoint sets of colours on $\cH_{\mathrm{sml}}$ and $\cH_{\mathrm{med}}$.

We begin with the proof of Theorem~\ref{main-thm-2} (the stability theorem), as we use this result in the proof of Theorem~\ref{main-thm}.  In this proof, we apply Lemma~\ref{lemma:good-ordering} to $\cH_{\mathrm{lrg}}$ and obtain a partition of it into $\cH_1$, $\cW$, and $\cH_2$ and an ordering $\ordering$ of its edges.  We use Corollary~\ref{sparsity-corollary} to colour the edges of $\cW$, and we use~\ref{reordering-goodness2} to colour $\cH_2$ (greedily, in the ordering provided by $\ordering$) with a disjoint set of colours, so that in total we use at most $(1 - 2\mu)tn + 1$ colours. 
We then use~\ref{reordering-goodness1} and~\ref{partition-order} to extend this colouring to $\cH_1$ without using any additional colours. Finally, we use Theorem~\ref{thm:kahn} to extend this colouring to $\cH_{\mathrm{sml}}$ with no additional colours and also to colour $\cH_{\mathrm{med}}$ with a small set of additional colours, so that in total, we use at most $(1 - \mu)tn$ colours.

In the proof of Theorem~\ref{main-thm}, by Theorem~\ref{main-thm-2}, we may assume that at least $(1 -  \delta)tn$ edges have size $(1 \pm \delta)\sqrt n$.  Hence, we can apply Lemma~\ref{lemma:good-ordering-extremal} to $\cH_{\mathrm{lrg}} \cup \cH_{\mathrm{med}}$ to obtain a partition of it into $\cH_1$, $\cH_2$, and $\cH_3$ and an ordering $\ordering$ of its edges.  We use Lemma~\ref{extremal-case-lemma} to colour the edges of $\cH_3$, and we use~\ref{reordering-goodness3}--\ref{partition-order2} to extend this colouring to $\cH_{\mathrm{lrg}}\cup\cH_{\mathrm{med}}$ without using any additional colours while also ensuring that only a small subset of colours are used on $\cH_{\mathrm{med}}$.  As in the proof of Theorem~\ref{main-thm-2}, we can extend the colouring to $\cH_{\mathrm{sml}}$ using Theorem~\ref{thm:kahn} without using any additional colours.

\subsection{Proof of the stability theorem, Theorem~\ref{main-thm-2}}
\label{subsec:proofofstability}

We begin with the proof of Theorem~\ref{main-thm-2}, but first, we prove it in the special case when all edges are large in the following lemma.

\begin{lemma}\label{lemma:large-extremal-col}
    Let $0 < 1/n_0 \ll 1/r \ll \mu \ll \delta \ll 1$, and let $n, t\in\mathbb N$ where $n \geq n_0$.  Let $\mathcal H$ be an $n$-vertex hypergraph where $\cH$ has $\Delta_2(\cH) \leq t$ and $\antirank(\cH) > r$.
    If the number of edges in $\cH$ of size $(1 \pm \delta)\sqrt{n}$ is at most $(1-\delta)tn$, then $\chi'_\ell(\cH) \leq (1 - \mu)tn$.
\end{lemma}
\begin{proof}
 We will instead prove the contrapositive statement that if $\chi'_\ell(\cH) > (1 - \mu)tn$, then the number of edges in $\cH$ of size $(1 \pm \delta)\sqrt{n}$ is more than $(1-\delta)tn$.
 
 Let $\delta'$ satisfy $\mu \ll \delta' \ll \delta$. 
 Since $\chi'_\ell(\cH) > (1 - \mu)tn$, there is a list assignment $C$ for the edges of $\cH$ satisfying $|C(e)| = \lceil (1 - \mu)tn\rceil$ for every $e \in \cH$ such that $\cH$ is not $C$-edge-colourable.
 By Lemma~\ref{lemma:good-ordering} (with $r$, $\mu$, and $\delta'$ playing the roles of $r$, $\mu$, and $\delta$, respectively) there is a partition of $\cH$ into three spanning subhypergraphs $\cH_1$, $\cW$, and $\cH_2$ satisfying
  \begin{enumerate}[label=(P\arabic*')]
  \item\label{W-max-edge-size'} $\rank(\cW) \leq (1 + \delta')\antirank(\cW)$,
  \item\label{W-volume2'} $\vol_\cH(\cW) \geq (1 - \delta')t$,
  \end{enumerate}
  and~\ref{H2-edge-size} and an ordering $\ordering$ of the edges of $\cH$ satisfying~\ref{reordering-goodness1},~\ref{reordering-goodness2}, and~\ref{partition-order}.  
  
  Let $r' \coloneqq \antirank(\cW)$, and let $\zeta \coloneqq 1 - r' / \sqrt n$.  First we claim that $r' \geq (1 - \delta)\sqrt n$.  Suppose to the contrary.  We derive a contradiction by showing $\cH$ is $C$-edge-colourable.  To that end, we apply Proposition~\ref{lemma:reserve-colors} with $\zeta / 900$ and $\zeta / 10000$ playing the roles of $\gamma$ and $\xi$, respectively, to obtain a set $R \subseteq \bigcup_{e \in \cH}C(e)$ such that
  \begin{equation}\label{large-reserved-colours}
    \text{every $e \in \cH$ satisfies $ \zeta tn / 1000 \leq |R \cap C(e)| \leq \zeta tn / 800.$}
  \end{equation}
  For every $e \in \cH$, let $C_1(e) \coloneqq C(e) \setminus R$ and $C_2(e) \coloneqq C(e) \cap R$.  
  
   Since we assume $r' < (1 - \delta)\sqrt n$, we have $\zeta > \delta \gg \mu$, so $\zeta / 800 + \mu  < \zeta / 500$.  Hence, by~\eqref{large-reserved-colours}, every $e \in \cH$ satisfies $|C_1(e)| \geq (1 - \mu)tn - \zeta tn / 800 > (1 - \zeta / 500)tn$. 
   Therefore, by~\ref{W-max-edge-size'} and Corollary~\ref{sparsity-corollary} with $r'$ and $\delta'$ playing the roles of $r$ and $\alpha$, respectively, we have $\chi'_\ell(\cW) \leq (1 - \zeta / 500)tn$, which implies that there is a $C_1$-edge-colouring $\phi_1$ of $\cW$. 
 
   Next we show that there is a $C_2$-edge-colouring $\phi_2$ of $\cH_2$.  To that end, let $k \coloneqq e(\cH_2)$.  By~\eqref{large-reserved-colours}, every $e \in \cH_2$ satisfies $|C_2(e)| \geq \zeta tn / 1000$.  Clearly $\chi'_\ell(\cH_2) \leq k$, so we assume $k > \zeta tn/ 1000$.  Since $\zeta > \delta$, we have $k > \zeta tn/ 1000 > 2\delta^2 tn$.  By~\ref{H2-edge-size}, every edge of $\cH_2$ has size at least $r'$, so we have $\vol_{\cH}(\cH_2) \geq k(r' - 1)^2 / n^2$.  On the other hand, by~\ref{W-volume2'}, and since $\cH_2 \cap \cW = \varnothing$, we have $\vol_{\cH}(\cH_2) \leq \delta' t \leq \delta^3 t$.
   Thus, $2\delta^2 tn < k \leq \delta^3 tn^2 / (r' - 1)^2$, so $r' < \delta^{1/4} \sqrt{n}$.
   Therefore, $\zeta > 1000/1001$, so $tn/2000 + 1 \leq \zeta tn/ 1000$.  Hence, by~\ref{reordering-goodness2}, we can properly colour $\cH_2$ greedily from $C_2$, in the ordering provided by $\ordering$, as claimed.

  By the choice of $C_1$ and $C_2$, we can combine $\phi_1$ and $\phi_2$ to obtain a $C$-edge-colouring of $\cH_2 \cup \cW$, and by~\ref{reordering-goodness1} and~\ref{partition-order}, we can extend such a colouring to $\cH_1$ greedily without using any additional colours, contradicting that $\cH$ is not $C$-edge-colourable.  Therefore $r' \geq (1 - \delta)\sqrt n$, as claimed.
  
  We also have $r' \leq \sqrt{n / (1 - 4\mu)}$, as otherwise~\ref{H2-edge-size} and the fact that $\vol_\cH(\cH_2\cup \cW) \leq t$ together imply $e(\cH_2\cup \cW) \leq (1 - 2\mu)tn$.\COMMENT{We have $t \geq \vol_{\cH}(\cH_2\cup \cW) \geq \left.e(\cH_2 \cup \cW)\binom{r'}{2}\middle/\binom{n}{2}\right.$, and if $r' \geq \sqrt{n / (1 - 4\mu)}$, then $\left.\binom{r'}{2}\middle/\binom{n}{2}\right. \geq (1 - 1/\sqrt n)/((1 - 4\mu)n)$.}  
  Together with~\ref{reordering-goodness1} and ~\ref{partition-order}, this fact would imply that every $e \in \cH$ satisfies $\fwddeg_\cH(e) \leq (1 - 2\mu) tn$, in which case $\chi'_\ell(\cH) \leq (1 - \mu) tn$, a contradiction.
  
   Since $\sqrt{n / (1 - 4\mu)} \geq r' \geq (1 - \delta)\sqrt n$, by~\ref{W-max-edge-size'}, the edges in $\cW$ have size $(1\pm \delta)\sqrt n$.  In fact, the edges in $\cW$ have size at most $(1 + \delta^2)\sqrt n$, so by~\ref{W-volume2'}, we have $e(\cW) \geq \vol_\cH(\cW)(n - 1) / (1 + \delta^2)^2 > (1 - \delta)tn$,\COMMENT{since $\vol_\cH(\cW) \leq e(\cW)\left.\binom{(1 + \delta^2)\sqrt n}{2}\middle / \binom{n}{2}\right. \leq e(\cW) (1 + \delta^2)^2 / (n - 1)$} as desired.
\end{proof}

Now we restate and prove Theorem~\ref{main-thm-2}.
\mainthmsecond*
\begin{proof}
Without loss of generality, we may assume that $\delta$ is sufficiently small. Let 
\begin{equation*}
    0 < 1/n_0 \ll 1/r_0 \ll 1/r_1 \ll \mu \ll \delta \ll 1,
\end{equation*}
let $t, n \in \mathbb N$ where $n \geq n_0$, and let $\cH$ be an $n$-vertex hypergraph with maximum codegree at most $t$ and maximum degree at most $(1 - \delta)tn$ such that $|\{e \in \cH : |V(e)| = (1 \pm \delta)\sqrt n\}| \leq (1 - \delta)tn$.  We will show that $\chi'_\ell(\cH) \leq (1 - \mu)tn$, so let $C$ be any assignment of lists for $\cH$ satisfying $|C(e)| = \lceil(1 - \mu)tn\rceil$ for every $e \in \cH$. We will show that $\cH$ is $C$-edge-colourable.  
  
 Decompose $\cH$ into the following spanning subhypergraphs:
  \begin{itemize}
  \item $\cH_{\mathrm{sml}} \coloneqq \{ e \in \cH \: : \: |V(e)| \leq r_1 \}$,
  \item $\cH_{\mathrm{med}} \coloneqq \{ e \in \cH \: : \: r_1 < |V(e)| \leq r_0 \}$, and 
  \item $\cH_{\mathrm{lrg}} \coloneqq \{ e \in \cH \: : \: |V(e)| > r_0 \}$.
  \end{itemize}
  Note that, since $\cH$ has maximum codegree at most $t$, by Observation~\ref{obs:degree}
  \begin{enumerate}[label=(\theequation)]
    \stepcounter{equation}
  \item\label{eqn:med-max-deg} $\cH_{\mathrm{med}}$ has maximum degree at most $2tn / r_1 \leq \mu t n / 2$ and
    \stepcounter{equation}
  \item\label{eqn:sml-lrg-intersect2} every $e \in \cH_{\mathrm{sml}}$ satisfies $|N_\cH(e) \cap \cH_{\mathrm{lrg}}| \leq 2r_1 tn / r_0 \leq \mu tn$.
  \end{enumerate}
  
By Proposition~\ref{lemma:reserve-colors} with $2\mu$ and $\mu/2$ playing the roles of $\gamma$ and $\xi$, respectively, there exists a set $R \subseteq \bigcup_{e\in\cH}C(e)$ such that
  \begin{equation}\label{list-reserved-colours-stability}
    \text{every $e \in \cH$ satisfies $ \mu tn \leq |R \cap C(e)| \leq 3\mu tn.$}
  \end{equation}
For every $e \in \cH_{\mathrm{med}}\cup\cH_{\mathrm{sml}}$, let $C_1(e) \coloneqq C(e) \cap R$, and for every $e \in \cH_{\mathrm{lrg}}$, let $C_1(e) \coloneqq C(e) \setminus R$. For every $e \in \cH$, let $C_2(e) \coloneqq C(e) \setminus C_1(e)$.
  By~\eqref{list-reserved-colours-stability},
\begin{enumerate}[label=(\theequation)]
    \stepcounter{equation}
    \item\label{eqn:large-col-lb2} every $e\in\cH_{\mathrm{lrg}}$ satisfies $|C_1(e)| \geq (1 - \mu)tn - 3\mu tn \geq (1 - 4\mu)tn$,
    \stepcounter{equation}
  \item\label{eqn:med-col-lb2} every $e \in \cH_{\mathrm{med}}$ satisfies $|C_1(e)| \geq \mu tn$, and \stepcounter{equation}
  \item\label{eqn:small-col-lb2} every $e \in \cH_{\mathrm{sml}}$ satisfies $|C_2(e)| \geq (1 - 4\mu)tn$.
\end{enumerate}

By~\ref{eqn:large-col-lb2} and Lemma~\ref{lemma:large-extremal-col} with $r_0$, $4\mu$, and $\delta$ playing the roles of $r$, $\mu$, and $\delta$, respectively, there is a $C_1$-edge-colouring $\phi_{\mathrm{lrg}}$ of $\cH_{\mathrm{lrg}}$.
By~\ref{eqn:med-col-lb2},~\ref{eqn:med-max-deg}, and Theorem~\ref{thm:kahn} with $r_0$, $1$, $2/(\mu n_0)$, and $\mu tn / 2$ playing the roles of $k$, $\eps$, $\delta$, and $D$, respectively, there is a $C_1$-edge-colouring $\phi_{\mathrm{med}}$ of $\cH_{\mathrm{med}}$.  By the choice of $C_1$, we can combine $\phi_{\mathrm{lrg}}$ and $\phi_{\mathrm{med}}$ to obtain a $C_1$-edge-colouring $\phi_1$ of $\cH_{\mathrm{lrg}}\cup\cH_{\mathrm{med}}$.

Now for each $e \in \cH_{\mathrm{sml}}$, let $C'_2(e) \coloneqq C_2(e)\setminus \{\phi_1(f) : f \in N_\cH(e) \cap \cH_{\mathrm{lrg}}\}$.  By~\ref{eqn:sml-lrg-intersect2} and~\ref{eqn:small-col-lb2}, $|C'_2(e)| \geq (1 - 5\mu)tn \geq (1 + \mu)(1 - \delta) tn$ for every $e \in \cH_{\mathrm{sml}}$.  Therefore, by Theorem~\ref{thm:kahn} with $r_1$, $\mu$, $2 / n_0$, and $(1 - \delta)tn$ playing the roles of $k$, $\eps$, $\delta$, and $D$, respectively, there exists a proper edge-colouring $\phi_2$ of $\cH_{\mathrm{sml}}$ such that $\phi_{2}(e) \in C'_2(e)$ for every $e \in \cH_{\mathrm{sml}}$.  By combining $\phi_1$ and $\phi_2$, we obtain the desired $C$-edge-colouring of $\cH$.
\end{proof}

\subsection{Proof of Theorem~\ref{main-thm}}
\label{subsec:proofofmainthm}
We conclude this section by restating and proving  Theorem~\ref{main-thm}, which is our main result.
\mainthm*
\begin{proof}
 Let
  \begin{equation*}
    1 / n_0 \ll 1 / r_0 \ll 1 / r_1 \ll \delta \ll \gamma \ll \eps < 1,
  \end{equation*}
  let $n,t \in \mathbb N$ where $n \geq n_0$, and let $\cH$ be an $n$-vertex hypergraph with maximum codegree at most $t$ and maximum degree at most $(1 - \eps)tn$.  First, we show that $\chi'_\ell(\cH) \leq tn$, for which it suffices to show that if $C$ is an assignment of lists such that every $e \in \cH$ satisfies $|C(e)| = tn$, then $\cH$ has a proper edge-colouring $\phi$ such that $\phi(e) \in C(e)$ for every $e \in \cH$.  By Theorem~\ref{main-thm-2}, we may assume that the number of edges of size $(1 \pm \delta)\sqrt n$ in $\cH$ is at least $(1 - \delta)tn$.
  
   Decompose $\cH$ into the following spanning subhypergraphs:
  \begin{itemize}
  \item $\cH_{\mathrm{sml}} \coloneqq \{ e \in \cH \: : \: |V(e)| \leq r_1 \}$,
  \item $\cH_{\mathrm{med}} \coloneqq \{ e \in \cH \: : \: r_1 < |V(e)| \leq r_0 \}$, and 
  \item $\cH_{\mathrm{lrg}} \coloneqq \{ e \in \cH \: : \: |V(e)| > r_0 \}$.
  \end{itemize}
  By Lemma~\ref{lemma:good-ordering-extremal}, there is a partition of $\cH_{\mathrm{med}} \cup \cH_{\mathrm{lrg}}$ into three spanning subhypergraphs $\cH_1$, $\cH_2$, and $\cH_3$ satisfying~\ref{H1-edge-size} and~\ref{W-min-edge-size} and an ordering $\ordering$ of the edges of $\cH$ satisfying~\ref{reordering-goodness3}--\ref{partition-order2}. Since $\cH$ has maximum codegree at most $t$, note that by Observation~\ref{obs:degree}
  \begin{equation}
      \label{eqn:sml-lrg-intersect}
      \text{every $e \in \cH_{\mathrm{sml}}$ satisfies $|N_\cH(e) \cap \cH_{\mathrm{lrg}}| \leq 2r_1 tn / r_0 \leq \gamma tn$.}
  \end{equation}

By Proposition~\ref{lemma:reserve-colors} with $2\gamma$ and $\gamma/2$ playing the roles of $\gamma$ and $\xi$, respectively, there exists a set $R \subseteq \bigcup_{e\in\cH}C(e)$ such that
  \begin{equation}\label{list-reserved-colours}
    \text{every $e \in \cH$ satisfies $ 3\gamma tn/2 \leq |R \cap C(e)| \leq 5\gamma tn/2$.}
  \end{equation}
  
For every $e \in \cH_{\mathrm{med}}\cup\cH_{\mathrm{sml}}$, let $C_1(e) \coloneqq C(e) \cap R$, and for every $e \in \cH_{\mathrm{lrg}}$, let $C_1(e) \coloneqq C(e)$.  
Let $C_2(e) \coloneqq C(e) \setminus C_1(e)$ for every $e \in \cH$.
  By~\eqref{list-reserved-colours},
\begin{enumerate}[label=(\theequation)]
    \stepcounter{equation}
  \item\label{eqn:large-col-lb} every $e\in\cH_{\mathrm{lrg}}$ satisfies $|C_1(e)| \geq tn$,
    \stepcounter{equation}
  \item\label{eqn:med-col-lb} every $e \in \cH_{\mathrm{med}}$ satisfies $|C_1(e)| \geq 3\gamma tn/2$, and \stepcounter{equation}
  \item\label{eqn:small-col-lb} every $e \in \cH_{\mathrm{sml}}$ satisfies $|C_2(e)| \geq (1 - 3\gamma)tn$.
\end{enumerate}

By~\ref{W-min-edge-size} (of Lemma~\ref{lemma:good-ordering-extremal}), and Lemma~\ref{extremal-case-lemma} (with $2\delta$ playing the role of $\delta)$), $\chi'_\ell(\cH_3) \leq tn$, and by~\ref{reordering-goodness3} and~\ref{partition-order2} (of Lemma~\ref{lemma:good-ordering-extremal}), $\chi'_\ell(\cH_3 \cup \cH_2) \leq tn$. By~\ref{H1-edge-size}, we have $\cH_{\mathrm{med}} \subseteq \cH_1$ and $\cH_3 \cup \cH_2 \subseteq \cH_{\mathrm{lrg}}$. Thus by~\ref{eqn:large-col-lb} and~\ref{eqn:med-col-lb}, for every $e \in \cH_3 \cup \cH_2$, $|C_1(e)| \ge tn$ and for every $e \in \cH_1$, $|C_1(e)| \ge 3\gamma tn/2$. Combining this with~\ref{reordering-goodness4},~\ref{partition-order2} and the fact that $\chi'_\ell(\cH_3 \cup \cH_2) \leq tn$, we obtain a $C_1$-edge-colouring $\phi_1$ of $\cH_3 \cup \cH_2 \cup \cH_1 = \cH_{\mathrm{lrg}} \cup \cH_{\mathrm{med}}$ such that $\phi_1(e) \in C_1(e)$ for every $e \in \cH_{\mathrm{lrg}} \cup \cH_{\mathrm{med}}$ (by colouring $\cH_3 \cup \cH_2$ first and then extending it greedily to colour $\cH_1$). 

Now for each $e \in \cH_{\mathrm{sml}}$, let $C'_2(e) \coloneqq C_2(e)\setminus \{\phi_1(f) : f \in N_\cH(e) \cap \cH_{\mathrm{lrg}}\}$.  By~\eqref{eqn:sml-lrg-intersect} and~\ref{eqn:small-col-lb}, $|C'_2(e)| \geq (1 - 4\gamma)tn \geq (1 + \gamma)(1 - \eps) tn$ for every $e \in \cH_{\mathrm{sml}}$.  Therefore, by Theorem~\ref{thm:kahn} with $r_1$, $\gamma$, $2 / n_0$, and $(1 - \eps)tn$ playing the roles of $k$, $\eps$, $\delta$, and $D$, respectively, there exists a proper edge-colouring $\phi_2$ of $\cH_{\mathrm{sml}}$ such that $\phi_{2}(e) \in C'_2(e)$ for every $e \in \cH_{\mathrm{sml}}$.  By combining $\phi_1$ and $\phi_2$, we obtain the desired $C$-edge-colouring of $\cH$.  

It remains to show that if $\chi'_\ell(\cH) = tn$, then $\cH_3$ is intersecting and has exactly $tn$ edges, in which case Theorem~\ref{thm:char_intersect} implies\COMMENT{Suppose we showed that $\cH_3$ is intersecting. If there is a singleton edge, then all of the $tn$ edges in $\cH_3$ must contain that singleton edge (since $\cH_3$ is intersecting), so there is a vertex of degree $tn$, contradicting our assumption that the maximum degree of $\cH$ is at most $(1-\eps)tn$. Therefore, $\cH_3$ has no edge of size one, and Theorem~\ref{thm:char_intersect} applies.} that $\cH_3$ is a $t$-fold projective plane (since $\cH_3$ cannot be a $t$-fold near-pencil by our assumption that the maximum degree of $\cH$ is at most $(1-\eps)tn$), so $\cH = \cH_3$, as desired. To that end it suffices to show that if $\cH_3$ is not an intersecting hypergraph with $tn$ edges, and $C$ is an assignment of lists $C(e)$ to every $e\in \cH$, such that every $e\in\cH$ satisfies $|C(e)| = tn - 1$, then $\cH$ is $C$-edge-colourable.  The proof is nearly identical to the previous argument, replacing~\ref{eqn:large-col-lb} with
\begin{enumerate}[label=\ref*{eqn:large-col-lb}']
  \item every $e\in\cH_{\mathrm{lrg}}$ satisfies $|C_1(e)| \geq tn - 1$,
\end{enumerate}
so we omit the details.\COMMENT{Indeed, note that if $\cH$ is not intersecting, Lemma~\ref{extremal-case-lemma} shows that $\chi'(\cH_3) \leq tn-1$ and by~\ref{reordering-goodness3} and~\ref{partition-order2} (of Lemma~\ref{lemma:good-ordering-extremal}), $\chi'_\ell(\cH_3 \cup \cH_2) \leq tn-1$. The rest of the argument is identical.} (Note that in this case $\chi'_\ell(\cH_3 \cup \cH_2) \leq tn-1$ by Lemma~\ref{extremal-case-lemma} and by~\ref{reordering-goodness3}, and~\ref{partition-order2} of Lemma~\ref{lemma:good-ordering-extremal}.)
\end{proof}

\section{Intersecting hypergraphs with bounded codegree: Proof of Theorem~\ref{thm:char_intersect}}\label{sec:char}
In this section we prove Theorem~\ref{thm:char_intersect}.  Apart from being of independent interest, it is also used in the proof of the ``moreover" part of Theorem~\ref{main-thm}. As stated in Theorem~\ref{thm:charlinear}, de Bruijn and Erd\H{o}s~\cite{bruijn_erdos1948} and F\"uredi~\cite{furedi1986} characterised those intersecting linear hypergraphs whose number of
edges is maximum. Theorem~\ref{thm:char_intersect} is a version of this result for hypergraphs with maximum codegree at most $t$.\COMMENT{Let us derive Theorem~\ref{thm:charlinear} from Theorem~\ref{thm:char_intersect}. Assume $\cH$ is a linear intersecting hypergraph with no edge of size one. Then by Theorem~\ref{thm:char_intersect} with $t = 1$, $e(\cH) \leq \max_{v} |N[v]| \leq n$. Moreover, if $e(\cH) =  n$, then there is a vertex $v$ such that $e(\cH) = |N[v]| = n$ (i.e., $N[v]$ spans all vertices of $\cH$), so $\cH[N[v]] = \cH$ is a projective plane or a near-pencil, proving Theorem~\ref{thm:charlinear}.} 

\begin{theorem}[de Bruijn and Erd\H{o}s~\cite{bruijn_erdos1948}, F\"uredi~\cite{furedi1986}]\label{thm:charlinear}
If $\cH$ is an $n$-vertex linear intersecting hypergraph with no edge of size one, then $e(\cH) \leq n$. Moreover, if $e(\cH) = n$, then $\cH$ is either
\begin{itemize}
    \item a projective plane of order $k$ (so there exists $k \in \mathbb{N}$ with $n = k^2 + k + 1$) or
    
    \item a near-pencil.
\end{itemize}
\end{theorem}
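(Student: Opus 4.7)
The plan is two-stage: establish the inequality $e(\cH) \leq n$ via a Fisher-type linear algebra argument, then analyze the equality case through a local-degree lemma and a case split. Writing $m \coloneqq e(\cH)$, let $A \in \{0,1\}^{m \times n}$ be the edge-vertex incidence matrix of $\cH$. The Gram matrix $AA^T$ has off-diagonal entries $|V(e) \cap V(f)|$, which all equal $1$ (at most $1$ by linearity, at least $1$ by intersecting), and diagonal entries $|V(e)| \geq 2$ by the no-singleton hypothesis. Hence $AA^T = J + \mathrm{diag}(|V(e)| - 1)$ (where $J$ is the all-ones matrix) is positive definite, being the sum of a positive semi-definite rank-one matrix and a positive-definite diagonal. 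Therefore $\mathrm{rank}(A) \geq m$, and combined with $\mathrm{rank}(A) \leq n$, this yields $m \leq n$.

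The equality analysis hinges on the following local lemma: if $v \notin V(e)$, then $d(v) \leq |V(e)|$. Each edge $f \ni v$ is distinct from $e$, so $|V(f) \cap V(e)| = 1$ by linearity plus intersecting, and the map $f \mapsto V(f) \cap V(e)$ is injective by linearity. Suppose $e(\cH) = n$. If some edge $e_0$ has $|V(e_0)| = n - 1$, let $v$ be the unique vertex outside $V(e_0)$; linearity, intersecting, and the no-singleton hypothesis force every other edge to have the form $\{v, w\}$ with $w \in V(e_0)$, and the equality $e(\cH) = n$ forces all $n-1$ such pairs to appear, so $\cH$ is a near-pencil.

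Otherwise every edge has size at most $n - 2$. I first show every pair of vertices is covered: if $d(v, w) = 0$, the intersecting condition partitions $\cH$ into edges through $v$ versus through $w$, so $d(v) + d(w) = n$, and the local lemma forces $d(v), d(w) \geq 2$; applying it again to edges in the $v$-star gives $\sum_{e \ni v} |V(e)| \geq d(v)\,d(w)$, while linearity gives $\sum_{e \ni v}(|V(e)| - 1) \leq n - 2$. Together these yield $d(v)(n - d(v) - 1) \leq n - 2$, contradictory for $n \geq 5$ (small-$n$ cases dispatched by inspection). Hence every pair is covered, the dual $\cH^*$ is then also linear and intersecting, and the local lemma applied to $\cH^*$ gives the reverse bound $|V(e)| \leq d(v)$ whenever $v \notin V(e)$, whence $d(v) = |V(e)|$ for every non-incident pair.

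Finally, if two edges $e_1, e_2$ had distinct sizes, no vertex could lie outside both (else its degree would equal two different values), so $V(e_1) \cup V(e_2) = V$ and $|V(e_1)| + |V(e_2)| = n + 1$. Counting pairs through a fixed vertex $v \in V(e_1) \setminus V(e_2)$ (which has degree $|V(e_2)|$ and lies in $e_1$) produces $(|V(e_2)| - 1)(|V(e_2)| - 2) \leq 0$, so $|V(e_2)| \leq 2$ and hence $|V(e_1)| \geq n - 1$, contradicting the size bound. Therefore all edges share a common size $k$, every vertex has degree $k$ (since every vertex lies outside some edge), and the pair count $\binom{n}{2} = n \binom{k}{2}$ forces $n = k^2 - k + 1 = (k-1)^2 + (k-1) + 1$, identifying $\cH$ as a projective plane of order $k - 1$. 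The most delicate step is this final degree-arithmetic elimination of distinct edge sizes, which is where the interplay between the local lemma, its dual version, and the covering property of pairs is most strongly felt.
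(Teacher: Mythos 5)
Your Fisher-type argument for $e(\cH)\le n$ is correct, as are the local lemma ($v\notin V(e)$ implies $d(v)\le|V(e)|$) and the near-pencil case when some edge has size $n-1$. But there is a genuine gap at the heart of the equality analysis, namely in your proof that every pair of vertices is covered. You assert that if $d(v,w)=0$ then ``the intersecting condition partitions $\cH$ into edges through $v$ versus through $w$,'' i.e.\ $d(v)+d(w)=n$. This is a non sequitur: the intersecting hypothesis only constrains pairs of edges, and an edge may avoid both $v$ and $w$ while meeting every edge of the $v$-star and of the $w$-star in other vertices (disjointness of the two stars only gives $d(v)+d(w)\le n$). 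Without the partition claim the inequality $d(v)(n-d(v)-1)\le n-2$ has no basis, and the covering property is exactly what makes the dual $\cH^*$ intersecting and thus powers everything that follows (the reverse bound $|V(e)|\le d(v)$, the conclusion $d(v)=|V(e)|$ for non-incident pairs, the uniformity argument, and the final count $\binom{n}{2}=n\binom{k}{2}$). Proving that an extremal configuration covers every pair is essentially the crux of the de Bruijn--Erd\H{o}s/F\"uredi equality case, so it cannot be waved through; note that the paper itself does not prove Theorem~\ref{thm:charlinear} but cites \cite{bruijn_erdos1948,furedi1986}, and its proof of the codegree-$t$ analogue, Theorem~\ref{thm:char_intersect}, runs through a Motzkin-type double-counting inequality (Proposition~\ref{prop:motzkin}) — an argument of that kind, specialised to $t=1$, is one standard way to obtain precisely the information you are missing.

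There are also smaller slips, all repairable but worth flagging. Even granting the partition claim, $d(v)(n-d(v)-1)\le n-2$ is \emph{not} contradictory at $d(v)=n-2$, $d(w)=2$ (it holds with equality); you need the symmetric inequality coming from the $w$-star as well before you can conclude for $n\ge 5$. The step ``counting pairs through a fixed vertex $v$ produces $(|V(e_2)|-1)(|V(e_2)|-2)\le 0$'' is garbled: the count that actually works is over cross pairs with one vertex in $V(e_1)\setminus V(e_2)$ and the other in $V(e_2)\setminus V(e_1)$, each covered by one of the $n-2$ edges other than $e_1,e_2$ and each such edge covering at most one of them, which gives $(|V(e_1)|-2)(|V(e_2)|-2)\le 0$ and then the desired contradiction with the size bound. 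Finally, your case split silently omits an edge of size $n$ (trivially impossible, but it should be said), the claim ``the local lemma forces $d(v),d(w)\ge 2$'' needs its short argument spelled out, and ``every vertex lies outside some edge'' deserves its one-line justification (a vertex in all $n$ edges would force $n-1\ge\sum_e(|V(e)|-1)\ge n$). None of these secondary points is fatal; the unproved covering claim is the real gap.
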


We will use the following proposition whose proof is nearly identical to the proof of~\cite[Lemma 2.1]{KS1992}. Since it is short, we include it here for completeness.
\begin{proposition}\label{prop:motzkin}
    If $G$ is a simple bipartite graph with bipartition $(X, Y)$ such that no vertex in $X$ is adjacent to every vertex in $Y$, then  
    \begin{equation}\label{eqn:motzkinineq_prop}
        \sum\left(\frac{|X|d(x) - |Y|d(y)}{(|X| - d(y))(|Y| - d(x))} : x \in X,~y\in Y,~xy\notin E(G)\right) \geq 0.
    \end{equation}
\end{proposition}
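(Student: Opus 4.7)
The plan is to prove the inequality via a partial-fraction identity followed by interchanging the order of summation; write $a \coloneqq |X|$ and $b \coloneqq |Y|$ for convenience. The key algebraic observation is the rearrangement
\[
a\, d(x) - b\, d(y) \;=\; b\bigl(a - d(y)\bigr) - a\bigl(b - d(x)\bigr),
\]
which, after dividing both sides by $(a - d(y))(b - d(x))$, yields the identity
\[
\frac{|X|\, d(x) - |Y|\, d(y)}{(|X| - d(y))(|Y| - d(x))} \;=\; \frac{|Y|}{|Y| - d(x)} \;-\; \frac{|X|}{|X| - d(y)}.
\]
The hypothesis that no vertex in $X$ is adjacent to every vertex in $Y$ ensures $|Y| - d(x) > 0$ for every $x \in X$, so the first denominator is strictly positive; and whenever the pair $(x,y)$ actually appears in the sum, $xy \notin E(G)$ forces $d(y) < |X|$, so the second denominator is strictly positive as well.

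Next I would split the sum according to this identity and evaluate each piece by switching the order of summation. Fix $x \in X$: the number of $y \in Y$ with $xy \notin E(G)$ equals $|Y| - d(x)$, so summing $|Y|/(|Y| - d(x))$ over those $y$ contributes exactly $|Y|$, and summing over $x$ gives $|X|\,|Y|$ in total. For the other piece, fix $y \in Y$: if $d(y) = |X|$ then $y$ contributes nothing to the sum; otherwise the number of $x \in X$ with $xy \notin E(G)$ is $|X| - d(y)$, so summing $|X|/(|X| - d(y))$ over those $x$ contributes exactly $|X|$. Hence this second piece equals $|X| \cdot \bigl|\{y \in Y : d(y) < |X|\}\bigr|$.

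Subtracting, the total on the left side of \eqref{eqn:motzkinineq_prop} reduces to
\[
|X|\,|Y| - |X| \cdot \bigl|\{y \in Y : d(y) < |X|\}\bigr| \;=\; |X| \cdot \bigl|\{y \in Y : d(y) = |X|\}\bigr| \;\geq\; 0,
\]
which is the claimed inequality. There is no real obstacle here: the heart of the argument is spotting the partial-fraction identity above, and the remaining work is just interchanging sums and checking that the hypothesis rules out division by zero, both of which are straightforward.
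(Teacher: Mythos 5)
Your proof is correct and follows essentially the same route as the paper: the same partial-fraction identity rewriting each term as $\frac{|Y|}{|Y|-d(x)} - \frac{|X|}{|X|-d(y)}$, followed by the same interchange of summation giving $|X||Y|$ minus $|X|\cdot|\{y \in Y : d(y) < |X|\}| \geq 0$. Your additional remarks on the positivity of the denominators are a welcome (if minor) clarification of a point the paper leaves implicit.
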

\begin{proof}
The left side of~\eqref{eqn:motzkinineq_prop} is equal to $\sum\left ( \frac{|Y|}{|Y| - d(x)} - \frac{|X|}{|X| - d(y)} : x \in X~y\in Y~xy\notin E(G)\right )$. Since every vertex $x \in X$ satisfies $d(x) < |Y|$, this is equal to $\sum_{x \in X} |Y| - \sum_{y \in Y : d(y) \ne |X|}  |X| \geq |X||Y| - |Y||X| = 0$, as desired.
\end{proof}

Now we prove Theorem~\ref{thm:char_intersect}.  Its proof is inspired by some ideas in~\cite{KS1992}.

\CharIntersect*
\begin{proof}
We may assume that $\cH \ne \varnothing$ and that every vertex has degree at least one. First, suppose there exists a vertex $v \in V(\cH)$ with degree $e(\cH)$. Then since $\cH$ has maximum codegree at most $t$,

\begin{equation*}
    e(\cH) = d(v) \leq \sum_{w \in N[v] \setminus \{ v \}} d(v,w) \leq t(|N[v]| - 1).
\end{equation*}
Therefore, we may assume that there is no vertex with degree $e(\cH)$, i.e., $d(x) < e(\cH)$ for every $x \in V(\cH)$.

\begin{claim}\label{claim:motzkin}
For every $v \in V(\cH)$,
\begin{equation}\label{eqn:motzkin}
    \sum_{x \in N[v]} \sum_{e \in \cH : x \notin V(e)} \frac{|N[v]|\cdot d(x) - e(\cH)\cdot |N[v] \cap V(e)| }{ (|N[v]| - |N[v] \cap V(e)|)(e(\cH) - d(x))} \geq 0.
\end{equation}
\end{claim}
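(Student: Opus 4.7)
The inequality in Claim~\ref{claim:motzkin} has precisely the shape of the Motzkin-type inequality provided by Proposition~\ref{prop:motzkin}, so the plan is to realise it as a direct application of that proposition to a well-chosen bipartite graph. Fix $v \in V(\cH)$, and form the simple bipartite graph $G$ with bipartition $(X, Y) \coloneqq (N[v], \cH)$, where $x \in N[v]$ is joined to $e \in \cH$ precisely when $x \in V(e)$. Even if $\cH$ has multi-edges, distinct elements of $Y$ give rise to distinct edges of $G$, so $G$ is indeed simple. Then $|X| = |N[v]|$, $|Y| = e(\cH)$, the $G$-degree of $x \in X$ equals $d_\cH(x)$, and the $G$-degree of $e \in Y$ equals $|N[v] \cap V(e)|$; moreover, the non-edges of $G$ are exactly the pairs $(x, e)$ with $x \in N[v]$, $e \in \cH$, and $x \notin V(e)$.

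To invoke Proposition~\ref{prop:motzkin}, one needs to check its single hypothesis: no vertex of $X$ is adjacent to every vertex of $Y$, i.e.\ every $x \in N[v]$ satisfies $d_\cH(x) < e(\cH)$. This has already been secured in the paragraph immediately preceding the claim, where the case of some vertex having degree equal to $e(\cH)$ was disposed of directly using the codegree bound (giving the conclusion of Theorem~\ref{thm:char_intersect} in that case). With the hypothesis in force, substituting the degree identities listed above into the conclusion of Proposition~\ref{prop:motzkin} reproduces~\eqref{eqn:motzkin} verbatim.

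The main (and essentially only) obstacle is the bookkeeping involved in identifying the bipartite graph and confirming the degree correspondence; once this is done, the claim follows from a single invocation of Proposition~\ref{prop:motzkin}. All the substantive work of Theorem~\ref{thm:char_intersect} is deferred to the subsequent arguments, which will presumably combine~\eqref{eqn:motzkin} with the codegree-$t$ bound (via summation or weighted double counting over vertices $x \in N[v]$ and edges $e \in \cH$ with $x \notin V(e)$) to extract the bound $e(\cH) \leq t \max_v |N[v]|$ and to tease out the characterisation of the equality case as either a $t$-fold projective plane or a $t$-fold near-pencil.
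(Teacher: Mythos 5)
Your proposal is correct and matches the paper's proof exactly: the paper also applies Proposition~\ref{prop:motzkin} to the bipartite incidence graph with $X = N[v]$, $Y = \cH$, and $xe \in E(G)$ iff $x \in V(e)$, using the preceding reduction to the case $d(x) < e(\cH)$ for all $x$ to verify the hypothesis. The degree identifications and the resulting substitution are as you describe, so nothing is missing.
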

\begin{claimproof}
Since there is no vertex $x \in V(\cH)$ with $d(x) = e(\cH)$, 
the claim follows immediately from Proposition~\ref{prop:motzkin}, where
$N[v]$ and $\cH$ play the roles of $X$ and $Y$, respectively, and $G$ is the bipartite graph with bipartition $(X, Y)$ where $xe \in E(G)$ if and only if $x \in V(e)$.
\end{claimproof}

\begin{claim}\label{claim:internal-degree-bound}
If $v_{\max} \in V(\cH)$ has maximum degree in $\cH$, $x \in N[v_{\max}]$, and $e \in \cH$ such that $x \notin V(e)$, then
\begin{equation*}
    d(x) \leq t \cdot |N[v_{\max}] \cap V(e)|.
\end{equation*}
\end{claim}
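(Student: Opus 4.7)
The plan is to split into two cases according to whether $v_{\max} \in V(e)$, and in each case combine the intersecting property of $\cH$ with a simple representative-counting argument (in the spirit of the bounds used in Section~\ref{fpp-extremal-section}), exploiting the maximality inequality $d(x) \leq d(v_{\max})$ in the harder case.

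Case 1: $v_{\max} \in V(e)$. Then every vertex of $V(e)$ lies in $N[v_{\max}]$ via the edge $e$ itself, so $N[v_{\max}] \cap V(e) = V(e)$ and it suffices to show the apparently weaker bound $d(x) \leq t|V(e)|$. Here I would use that $x \notin V(e)$ forces every edge $g$ containing $x$ to satisfy $g \neq e$, hence $V(g) \cap V(e) \neq \varnothing$ by the intersecting property. Selecting a representative $y_g \in V(g) \cap V(e)$ and grouping the edges $g$ through $x$ by their chosen $y_g$ gives $d(x) \leq \sum_{y \in V(e)} d(x,y) \leq t|V(e)|$, using the codegree bound.

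Case 2: $v_{\max} \notin V(e)$. I would first apply $d(x) \leq d(v_{\max})$ by maximality of $d(v_{\max})$ and then bound $d(v_{\max})$ by essentially the same representative argument, but with $v_{\max}$ in place of $x$. Because $v_{\max} \notin V(e)$, every edge $f$ containing $v_{\max}$ is distinct from $e$ and therefore meets $V(e)$ in some vertex $y_f$; crucially, $y_f \in V(f)$ together with $v_{\max} \in V(f)$ forces $y_f \in N[v_{\max}]$ directly from the definition of $N[v_{\max}]$. Hence the representative map $f \mapsto y_f$ lands in $V(e) \cap N[v_{\max}]$, yielding $d(v_{\max}) \leq \sum_{y \in V(e) \cap N[v_{\max}]} d(v_{\max}, y) \leq t|V(e) \cap N[v_{\max}]|$, and chaining with $d(x) \leq d(v_{\max})$ gives the desired inequality.

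There is no real obstacle here beyond setting up the case split cleanly; the only subtle point is recognising that in Case~2 one should not try to bound $d(x)$ directly via the intersecting property (which would only give the weaker bound $t|V(e)|$, since vertices of $V(e) \setminus N[v_{\max}]$ can in principle have positive codegree with $x$), but instead route through $v_{\max}$, where the improvement is automatic because $N[v_{\max}]$ contains every edge through $v_{\max}$ by definition.
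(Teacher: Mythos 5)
Your proposal is correct and follows essentially the same argument as the paper: the same case split on whether $v_{\max} \in V(e)$, with $d(x) \leq \sum_{w \in V(e)} d(x,w) \leq t|V(e)| = t|N[v_{\max}]\cap V(e)|$ in the first case, and $d(x) \leq d(v_{\max}) \leq \sum_{w} d(v_{\max},w) \leq t|N[v_{\max}]\cap V(e)|$ via maximality and the intersecting property in the second. Your explicit remark that only representatives in $N[v_{\max}]$ can contribute when routing through $v_{\max}$ is exactly the (implicit) point in the paper's bound.
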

\begin{claimproof}
If $v_{\max} \notin V(e)$, then since $\cH$ is intersecting with maximum codegree at most $t$, $d(x) \leq d(v_{\max}) \leq \sum_{w \in V(e)} d(v_{\max},w) \leq t \cdot |N[v_{\max}] \cap V(e)|$. Otherwise, if $v_{\max} \in V(e)$, then $ N[v_{\max}] \cap V(e)= V(e)$, and we have $d(x) \leq \sum_{w \in V(e)}d(x,w) \leq t \cdot |V(e)| = t \cdot |N[v_{\max}] \cap V(e)|$,\COMMENT{since $\cH$ has maximum codegree at most $t$} proving the claim.
\end{claimproof}

\begin{claim}\label{claim:edge-bound}
    If $v_{\max} \in V(\cH)$ has maximum degree in $\cH$, then $e(\cH) \leq t \cdot |N[v_{\max}]|$.
\end{claim}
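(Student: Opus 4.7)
The plan is to apply Claim~\ref{claim:motzkin} with $v = v_{\max}$ and derive a contradiction under the assumption $e(\cH) > t|N[v_{\max}]|$. Concretely, I would show that under this assumption every summand in~\eqref{eqn:motzkin} is strictly negative while the sum is nonempty, contradicting the nonnegativity guaranteed by Claim~\ref{claim:motzkin}.

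For each pair $(x,e)$ with $x \in N[v_{\max}]$ and $x \notin V(e)$, the reduction at the start of the proof of Theorem~\ref{thm:char_intersect} gives $d(x) < e(\cH)$, so $e(\cH) - d(x) > 0$, and since $x$ witnesses $N[v_{\max}] \not\subseteq V(e)$, also $|N[v_{\max}]| - |N[v_{\max}]\cap V(e)| > 0$, making the denominator positive. For the numerator, Claim~\ref{claim:internal-degree-bound} yields $d(x) \leq t|N[v_{\max}]\cap V(e)|$, hence
$$|N[v_{\max}]|\cdot d(x) - e(\cH)\cdot|N[v_{\max}]\cap V(e)| \leq \bigl(t|N[v_{\max}]| - e(\cH)\bigr)\cdot |N[v_{\max}]\cap V(e)|.$$
Because $\cH$ is intersecting and $v_{\max}$ is incident to at least one edge, every $e \in \cH$ meets $N[v_{\max}]$, so $|N[v_{\max}]\cap V(e)| \geq 1$; combined with the supposition $e(\cH) > t|N[v_{\max}]|$, this makes the numerator strictly negative.

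It remains to verify that the sum is nonempty. Since $d(v_{\max}) < e(\cH)$ by the reduction, there exists an edge $e^* \in \cH$ with $v_{\max} \notin V(e^*)$, and then $(v_{\max}, e^*)$ contributes a term to the sum. Thus the left side of~\eqref{eqn:motzkin} (with $v = v_{\max}$) is a nonempty sum of strictly negative terms, contradicting Claim~\ref{claim:motzkin}. I do not anticipate a real obstacle: the argument is just the natural way to combine the two preceding claims, and all the analytical work has already been absorbed into Claims~\ref{claim:motzkin} and~\ref{claim:internal-degree-bound}.
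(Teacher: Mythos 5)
Your proposal is correct and is essentially the paper's own argument: both rest on Claim~\ref{claim:motzkin} applied at $v_{\max}$, the bound $d(x)\leq t|N[v_{\max}]\cap V(e)|$ from Claim~\ref{claim:internal-degree-bound}, the nonemptiness of the sum via $d(v_{\max})<e(\cH)$, and the intersecting property giving $|N[v_{\max}]\cap V(e)|\geq 1$. The only difference is presentational: you phrase it as a contradiction (all terms strictly negative), whereas the paper argues directly that some term has nonnegative numerator and divides by $|N[v_{\max}]\cap V(e)|$.
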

\begin{claimproof}
We apply Claim~\ref{claim:motzkin} with $v_{\max}$ playing the role of $v$. 
Since $v_{\max} \in N[v_{\max}]$, and every $x \in N[v_{\max}]$ satisfies $d(x) < e(\cH)$, the left side of~\eqref{eqn:motzkin} is not an empty summation. Not all terms in this summation can be negative, so there exist $x \in N[v_{\max}]$ and $e \in \cH$ with $x \notin V(e)$ and $|N[v_{\max}]|\cdot d(x) \geq e(\cH)\cdot |N[v_{\max}] \cap V(e)|$.
Thus, by Claim~\ref{claim:internal-degree-bound}, we have $e(\cH) \cdot |N[v_{\max}] \cap V(e)| \leq |N[v_{\max}]|\cdot d(x) \leq t \cdot |N[v_{\max}]|\cdot |N[v_{\max}]\cap V(e)|$. 
Noting that $N[v_{\max}]\cap V(e) \ne \varnothing$ (since $\cH$ is intersecting), this implies $e(\cH) \leq t \cdot |N[v_{\max}]|$, as desired.
\end{claimproof}

Claim~\ref{claim:edge-bound} proves the first part of Theorem~\ref{thm:char_intersect}. For the remainder of the proof, we assume $e(\cH) = t \max_{v \in V(\cH)} |N[v]|$.
By Claim~\ref{claim:edge-bound}, $t \max_{w \in V(\cH)} |N[w]| = e(\cH) \leq t \cdot |N[v_{\max}]|$ for every vertex $v_{\max}$ having maximum degree in $\cH$.  Thus,
\begin{equation} \label{eqn:maxnv} 
    \text{if $v_{\max} \in V(\cH)$ has maximum degree in $\cH$, then $|N[v_{\max}]| = \max_{w \in V(\cH)} |N[w]|$ and $e(\cH) = t \cdot |N[v_{\max}]|$.}
\end{equation}

\begin{claim}\label{claim:tight-internal-degree-bound}
For every vertex $v_{\max}$ having maximum degree in $\cH$, if $x \in N[v_{\max}]$, $e \in \cH$, and $x \notin V(e)$, then
\begin{equation*}
    d(x) = t \cdot |N[v_{\max}] \cap V(e)|.
\end{equation*}
\end{claim}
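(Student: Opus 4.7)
The plan is to apply Claim~\ref{claim:motzkin} with $v = v_{\max}$ and to argue that every summand on the left side of~\eqref{eqn:motzkin} is non-positive; combined with the non-negativity of the sum, this will force each summand to vanish and yield the claimed equality.

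Fix a vertex $v_{\max}$ of maximum degree together with an arbitrary pair $(x,e)$ with $x \in N[v_{\max}]$ and $x \notin V(e)$. First I would check that the denominator
\[
(|N[v_{\max}]| - |N[v_{\max}] \cap V(e)|)\bigl(e(\cH) - d(x)\bigr)
\]
is strictly positive: the first factor is positive because $x \in N[v_{\max}] \setminus V(e)$ (so $V(e)$ cannot contain all of $N[v_{\max}]$), and the second factor is positive by the standing reduction, made at the outset of the proof, that no vertex has degree $e(\cH)$. Next, using the identity $e(\cH) = t|N[v_{\max}]|$ from~\eqref{eqn:maxnv}, I would rewrite the numerator as
\[
|N[v_{\max}]|\cdot d(x) - e(\cH)\cdot |N[v_{\max}] \cap V(e)| = |N[v_{\max}]|\bigl(d(x) - t|N[v_{\max}] \cap V(e)|\bigr),
\]
which is non-positive by Claim~\ref{claim:internal-degree-bound}.

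Hence every term of the summation~\eqref{eqn:motzkin}, applied with $v = v_{\max}$, is non-positive; but by Claim~\ref{claim:motzkin} their sum is non-negative. Each term must therefore equal zero, which gives $d(x) = t|N[v_{\max}] \cap V(e)|$ for every relevant pair $(x,e)$, as desired. There is no real obstacle here once Claims~\ref{claim:motzkin} and~\ref{claim:internal-degree-bound} and the identity~\eqref{eqn:maxnv} are in hand; the content of the claim is just a sign-squeeze argument, and the only point requiring care is confirming that both factors of the denominator are strictly positive so that the sign of each summand is genuinely controlled by its numerator.
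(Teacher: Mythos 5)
Your proposal is correct and follows essentially the same route as the paper: use Claim~\ref{claim:internal-degree-bound} together with $e(\cH) = t|N[v_{\max}]|$ from~\eqref{eqn:maxnv} to show every term of~\eqref{eqn:motzkin} is non-positive, then invoke Claim~\ref{claim:motzkin} to force each term to vanish. Your extra check that both factors of the denominator are strictly positive is a sensible (if implicit in the paper) verification and does not change the argument.
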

\begin{claimproof}
    Let $v_{\max}$ be a vertex having maximum degree in $\cH$.
    For every $x \in N[v_{\max}]$ and $e \in \cH$ such that $x \notin V(e)$, by Claim~\ref{claim:internal-degree-bound},  $|N[v_{\max}]|\cdot d(x) \leq t \cdot |N[v_{\max}]|\cdot |N[v_{\max}] \cap V(e)| = e(\cH)\cdot |N[v_{\max}]\cap V(e)|$, so $|N[v_{\max}]|\cdot d(x) - e(\cH)\cdot |N[v_{\max}] \cap V(e)| \leq 0$.  
    In particular, each term in the left side of~\eqref{eqn:motzkin} is at most zero, where $v_{\max}$ plays the role of $v$ in~\eqref{eqn:motzkin}, 
    so Claim~\ref{claim:motzkin} implies that each term in the left side of~\eqref{eqn:motzkin} is equal to zero, i.e., $|N[v_{\max}]| \cdot d(x) = e(\cH) \cdot |N[v_{\max}] \cap V(e)|$ for every $x \in N[v_{\max}]$ and $e \in \cH$ such that $x \notin V(e)$, so $d(x) = t \cdot |N[v_{\max}] \cap V(e)|$ (since $e(\cH) = t  |N[v_{\max}]|$) as desired.
\end{claimproof}

\begin{claim}\label{claim:samenx}
     For every $x \in V(\cH)$, we have $N[x] = V(\cH)$.
\end{claim}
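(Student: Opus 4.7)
The plan is to proceed in two stages: Stage 1 establishes $N[v_{\max}] = V(\cH)$ for a maximum-degree vertex $v_{\max}$, and Stage 2 extends the conclusion to every $x \in V(\cH)$. I describe Stage 2 first since it is short. Suppose $y \in V(\cH) \setminus N[x]$ for some $x$; then $d(y) \ge 1$ yields $g \ni y$ with $x \notin V(g)$, and by Stage 1 and Claim~\ref{claim:tight-internal-degree-bound} applied with $v_{\max}$ (noting $x \in V(\cH) = N[v_{\max}]$) and $e = g$, $d(x) = t|V(g)|$. The intersecting property gives $\sum_{e \ni x}|V(e) \cap V(g)| \ge d(x) = t|V(g)|$, while the codegree bound gives $\sum_{e \ni x}|V(e) \cap V(g)| = \sum_{w \in V(g)} d(x,w) \le t|V(g)|$; equality forces $d(x,w) = t \ge 1$ for every $w \in V(g)$, so in particular $d(x,y) \ge 1$, contradicting $y \notin N[x]$.

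For Stage 1, suppose $y \in V(\cH) \setminus N[v_{\max}]$ and let $g \ni y$, so $v_{\max} \notin V(g)$. Claim~\ref{claim:tight-internal-degree-bound} with $x = v_{\max}$ gives $|V(g) \cap N[v_{\max}]| = q \coloneqq d(v_{\max})/t$, and the double-count used in Stage 2 (now using that $V(e) \cap V(g) \subseteq N[v_{\max}]$ for every $e \ni v_{\max}$) yields $|V(e) \cap V(g)| = 1$ for every such $e$ and $d(v_{\max},w) = t$ for every $w \in W \coloneqq V(g) \cap N[v_{\max}]$. Thus for every $w \in W$, $N[w] \supseteq V(g) \cup \{v_{\max}\}$ has size at least $q+2$, so by~\eqref{eqn:maxnv}, $\mu \coloneqq |N[v_{\max}]| \ge q+2$. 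The case $q = 1$ is immediate: then $W = \{w\}$, every edge through $v_{\max}$ meets $g$ at $w$ and so contains $w$, and $g$ also contains $w$, giving $d(w) \ge tq+1 > d(v_{\max})$.

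For $q \ge 2$, applying Claim~\ref{claim:tight-internal-degree-bound} with $v_{\max}$ and $x = w$ shows that $d(w) = tq$ unless every edge not containing $v_{\max}$ contains $w$, in which case $d(w) = t(\mu-q+1)$; in that latter scenario the claim also forces every edge through $v_{\max}$ not containing $w$ to have size $\mu - q + 1$, and summing the codegrees $\sum_{u \ne v_{\max}} d(u,v_{\max}) \le t(\mu-1)$ then gives $q(\mu-q) \le \mu-1$, hence $\mu \le q+1$, contradicting $\mu \ge q+2$ (a brief symmetry argument comparing edges of different sizes rules out the mixed case where some but not all $w \in W$ satisfy $d(w) < tq$). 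Hence every $w \in W$ is of maximum degree. Applying Claim~\ref{claim:tight-internal-degree-bound} now with max-degree $w$ and $x = y$ (valid since $y, w \in V(g)$ implies $y \in N[w]$), combined with the fact that edges through $v_{\max}$ not containing $w$ have size $d(w)/t = q$, and iterating over $w \in W$, shows every edge through $v_{\max}$ has size $q$ and hence $d(y) = tq$. So $y$ is itself of maximum degree, and Claim~\ref{claim:tight-internal-degree-bound} with max-degree $y$ yields $|N[y] \cap V(e)| = q$ for every edge $e \ni v_{\max}$; but $|V(e)| = q$ and $v_{\max} \in V(e) \setminus N[y]$ (since $y \notin N[v_{\max}]$ is symmetric), so $|N[y] \cap V(e)| \le q-1$, the desired contradiction.

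The main obstacle is Stage 1, specifically the chain of applications of Claim~\ref{claim:tight-internal-degree-bound} together with~\eqref{eqn:maxnv} needed to conclude that $y$ itself is of maximum degree; once that is established, the final contradiction is immediate.
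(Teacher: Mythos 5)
Your argument is sound in substance but takes a genuinely different route from the paper. The paper first shows that $N[x] = N[v_{\max}]$ for every $x \in N[v_{\max}]$: for an edge $e \ni v_{\max}$ with $x \notin V(e)$, Claim~\ref{claim:tight-internal-degree-bound} gives $d(x) = t|V(e)|$, and the chain $t|V(e)| = d(x) \le \sum_{w \in V(e)\cap N[x]} d(x,w) \le t|V(e)\cap N[x]| \le t|V(e)|$ (intersecting plus codegree at most $t$) forces $V(e) \subseteq N[x]$, hence $N[v_{\max}] \subseteq N[x]$, and equality follows from \eqref{eqn:maxnv}. With this in hand, a vertex $v \notin N[v_{\max}]$ is dispatched in two lines: any edge through $v$ meets $N[v_{\max}]$ in some $x$, and then $v \in V(e) \subseteq N[x] = N[v_{\max}]$. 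You invert the order: your Stage 1 attacks $N[v_{\max}] = V(\cH)$ directly by a case analysis on a hypothetical outside vertex $y$ (showing each $w \in W = V(g)\cap N[v_{\max}]$, then every edge through $v_{\max}$, then $y$ itself has the relevant degree/size, and finally contradicting Claim~\ref{claim:tight-internal-degree-bound} applied at $y$), and only then your Stage 2 extends to all $x$ by essentially the same double count the paper uses. Your individual applications of Claim~\ref{claim:tight-internal-degree-bound}, \eqref{eqn:maxnv} and the intersecting/codegree properties check out, so the proof can be completed along your lines; what the paper's ordering buys is brevity, since proving $N[x]=N[v_{\max}]$ inside the neighbourhood first makes the outside-vertex step immediate and avoids your entire Stage 1 analysis (in particular the use of $e(\cH)=t|N[v_{\max}]|$ and the codegree-sum count).

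Two small repairs are needed in Stage 1. First, the codegree sum at $v_{\max}$ in your latter scenario only yields $(q-1)(\mu-q)+1 \le \mu-1$ (the $t$ edges through both $v_{\max}$ and $w$ are not known to have size $\mu-q+1$), not $q(\mu-q)\le \mu-1$; this still contradicts $\mu \ge q+2$ when $q \ge 3$, and for $q=2$ you should instead use $d(w)=t(\mu-q+1)\le tq$, which gives $\mu \le 3 < q+2$. Second, the step ``hence $d(y)=tq$'' needs the edge in Claim~\ref{claim:tight-internal-degree-bound} to be chosen through both $v_{\max}$ and $w$ (possible since $d(v_{\max},w)=t\ge 1$), so that $V(e)\subseteq N[w]$ and $|N[w]\cap V(e)|=|V(e)|=q$; as written the choice of $e$ is left implicit. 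Also, the ``mixed case'' you mention does not arise: your latter scenario is refuted outright for each $w$ separately, so no symmetry argument is needed.
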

\begin{claimproof}
    Let $v_{\max}$ be a vertex having maximum degree in $\cH$. 
    Let $x \in N[v_{\max}]$. We will first show that $N[x] = N[v_{\max}]$. By Claim~\ref{claim:tight-internal-degree-bound}, for every $e \in \cH$ with $v_{\max} \in V(e)$, if $x \notin V(e)$, then
    \begin{equation*}
        t \cdot |V(e)| = t \cdot |V(e) \cap N[v_{\max}]| = d(x) \leq \sum_{w \in V(e) \cap N[x]}d(x,w) \leq t \cdot |V(e) \cap N[x]| \leq t \cdot |V(e)|,
    \end{equation*}
     since $\cH$ is intersecting and has maximum codegree at most $t$.
    In particular, all inequalities above hold with equality, so $V(e) \subseteq N[x]$. If $x \in V(e)$, we also have $V(e) \subseteq N[x]$.  Thus, $V(e) \subseteq N[x]$ for every $e \in \cH$ with $v_{\max} \in V(e)$, so $N[v_{\max}] \subseteq N[x]$. 
    Hence, by~\eqref{eqn:maxnv}, we have $N[v_{\max}] = N[x]$ as desired.
    
    Next we show that $N[v_{\max}] = V(\cH)$.
    Suppose that there exists $v \in V(\cH)\setminus N[v_{\max}]$. By our  assumption, $d(v) \ge 1$, so there exists an edge $e \in \cH$ with $v \in V(e)$. Since $\cH$ is intersecting, there exists $x \in V(e) \cap N[v_{\max}]$. Since $x \in N[v_{\max}]$, we have $N[x] = N[v_{\max}]$ by the discussion in the previous paragraph. But $v \in V(e) \subseteq N[x] = N[v_{\max}]$, contradicting that $v \notin N[v_{\max}]$. Hence, $N[v_{\max}] = V(\cH)$, as desired, and since $N[x] = N[v_{\max}]$ for every $x \in N[v_{\max}]$, the claim follows.
\end{claimproof}

For all $u \ne v \in V(\cH)$, let $E(u,v) \coloneqq E_\cH(u,v)$ be the number of edges $e \in \cH$ with $u,v \in V(e)$.

\begin{claim}\label{claim:nbrhood-structure} 
    Suppose $x \in V(\cH)$ and $e \in \cH$ with $x \notin V(e)$. Then
    \begin{enumerate}[label=(\roman*)]
        \item \label{claim6i} every $w \in V(e)$ satisfies $d(x,w) = t$, and
        \item \label{claim6ii} all distinct $w_1 , w_2 \in V(e)$ satisfy $E(x,w_1) \cap E(x,w_2) = \varnothing$.
    \end{enumerate}
      In particular, $d(x) = t \cdot |V(e)|$.
\end{claim}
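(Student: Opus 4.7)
My plan is to first reduce Claim~\ref{claim:nbrhood-structure} to Claim~\ref{claim:tight-internal-degree-bound} using Claim~\ref{claim:samenx}. Let $v_{\max}$ be any vertex of maximum degree in $\cH$. By Claim~\ref{claim:samenx}, $N[v_{\max}] = V(\cH)$, so in particular $V(e) \subseteq N[v_{\max}]$ and $x \in N[v_{\max}]$. Therefore $|N[v_{\max}] \cap V(e)| = |V(e)|$, and applying Claim~\ref{claim:tight-internal-degree-bound} to $v_{\max}$, $x$, and $e$ immediately gives
\[
    d(x) \;=\; t \cdot |N[v_{\max}] \cap V(e)| \;=\; t|V(e)|,
\]
which already establishes the ``in particular'' assertion of the claim.

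Next, I would obtain (i) and (ii) by sandwiching this value of $d(x)$ between two natural upper bounds. Since $\cH$ is intersecting, every edge through $x$ must meet $V(e)$, and thus
\[
    d(x) \;\leq\; \sum_{w \in V(e)} d(x,w),
\]
with equality if and only if every edge through $x$ meets $V(e)$ in exactly one vertex (equivalently, $E(x,w_1) \cap E(x,w_2) = \varnothing$ for all distinct $w_1, w_2 \in V(e)$). On the other hand, since $\cH$ has codegree at most $t$, every $w \in V(e)$ satisfies $d(x,w) \leq t$, hence
\[
    \sum_{w \in V(e)} d(x,w) \;\leq\; t|V(e)|,
\]
with equality if and only if $d(x,w) = t$ for every $w \in V(e)$.

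Combining the two displayed inequalities with $d(x) = t|V(e)|$ forces equality in both, which yields (ii) from the first inequality and (i) from the second. I do not anticipate any real obstacle: Claim~\ref{claim:samenx} does the essential work of upgrading Claim~\ref{claim:tight-internal-degree-bound} to apply at an arbitrary vertex $x$, and the remainder is a standard equality-in-the-pigeonhole argument driven by the intersecting property and the codegree bound.
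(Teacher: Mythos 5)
Your proposal is correct and follows essentially the same route as the paper: both deduce $d(x)=t|V(e)|$ from Claims~\ref{claim:tight-internal-degree-bound} and~\ref{claim:samenx}, sandwich $d(x)\leq\sum_{w\in V(e)}d(x,w)\leq t|V(e)|$ using the intersecting property and the codegree bound, and read off (i) and (ii) from equality holding throughout.
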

\begin{claimproof}
    Let $x \in V(\cH)$ and $e \in \cH$ with $x \notin V(e)$.
    By Claims~\ref{claim:tight-internal-degree-bound} and~\ref{claim:samenx}, since $\cH$ is intersecting and has maximum codegree at most $t$,
    \begin{equation*}
        t \cdot |V(e)| = d(x) \leq \sum_{w \in V(e)} d(x,w) \leq t \cdot |V(e)|.
    \end{equation*}
    In particular, all inequalities above hold with equality, and the claim follows.\COMMENT{Since the maximum codegree of $\cH$ is at most $t$, we have $d(x, w) = t$ for every $w \in V(e)$, as desired.  Moreover, no edge containing $x$ is counted more than once in the summation $\sum_{w \in V(e)}d(x, w)$ since $d(x) = \sum_{w \in V(e)}d(x, w)$, so $E(x, w_1) \cap E(x, w_2) = \varnothing$ for all distinct $w_1, w_2 \in V(e)$, as desired.} 
\end{claimproof}

\begin{claim}\label{claim:t-fold-of-linear}
The following statements hold.
\begin{enumerate}[label=(\roman*)]
    \item \label{claim7i} If $e,f \in \cH$ satisfy $|V(e) \cap V(f)| \geq 2$, then $V(e) = V(f)$.
    \item \label{claim7ii} Every $e \in \cH$ satisfies $|\{f \in \cH : V(f)=V(e) \}| = t$.
\end{enumerate} 

\end{claim}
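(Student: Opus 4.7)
The plan is to deduce both statements directly from Claim~\ref{claim:nbrhood-structure}, which already encodes the essential rigid structure: for any vertex $x$ and any edge $e'$ with $x \notin V(e')$, the edges through $x$ are partitioned by which vertex of $V(e')$ they share with $x$, with exactly $t$ edges in each class.

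For part~\ref{claim7i}, I will argue by contradiction. Suppose $e,f \in \cH$ satisfy $|V(e)\cap V(f)|\geq 2$ but $V(e) \neq V(f)$. Then, without loss of generality, there is some $x \in V(e)\setminus V(f)$, and we can pick two distinct $w_1,w_2 \in V(e)\cap V(f) \subseteq V(f)$. Since $x \notin V(f)$, Claim~\ref{claim:nbrhood-structure}\ref{claim6ii} applied to $x$ and $f$ yields $E(x,w_1) \cap E(x,w_2) = \varnothing$; but $e$ contains all three of $x$, $w_1$ and $w_2$, so $e \in E(x,w_1) \cap E(x,w_2)$, a contradiction.

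For part~\ref{claim7ii}, fix $e \in \cH$ and pick distinct $w_1,w_2 \in V(e)$, which is possible since no edge has size one. By part~\ref{claim7i} (just established), an edge $f \in \cH$ satisfies $V(f)=V(e)$ if and only if $\{w_1,w_2\} \subseteq V(f)$, so the quantity to compute equals $d(w_1,w_2)$. The codegree hypothesis gives the upper bound $d(w_1,w_2) \leq t$, and the task reduces to a matching lower bound. To obtain it, I use that no vertex of $\cH$ has degree $e(\cH)$ (noted just before Claim~\ref{claim:motzkin}), so there exists $e' \in \cH$ with $w_2 \notin V(e')$. Claim~\ref{claim:nbrhood-structure} then gives $d(w_2) = t|V(e')| \geq 2t$, using that every edge has size at least two. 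Consequently, if every edge through $w_2$ also contained $w_1$, we would have $d(w_1,w_2) = d(w_2) \geq 2t$, contradicting the codegree bound; hence some $f \in \cH$ contains $w_2$ but not $w_1$, and Claim~\ref{claim:nbrhood-structure}\ref{claim6i} applied with the vertex $w_1$ and the edge $f$ forces $d(w_1,w_2) = t$.

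The main subtlety will be in this final step for part~\ref{claim7ii}: obtaining equality (not merely an upper bound) for $d(w_1,w_2)$. The trick is to combine the absence of a vertex of full degree with $|V(e')| \geq 2$ to rule out the degenerate possibility that every edge through $w_2$ also passes through $w_1$; this forces the existence of a ``separating'' edge $f$ and places us back in the regime where Claim~\ref{claim:nbrhood-structure}\ref{claim6i} applies. This is also the only place where the hypothesis that no edge has size one is used crucially.
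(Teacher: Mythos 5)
Your proposal is correct and follows essentially the same route as the paper: part~\ref{claim7i} via Claim~\ref{claim:nbrhood-structure}\ref{claim6ii} applied to a vertex of $V(e)\setminus V(f)$ and the edge $f$, and part~\ref{claim7ii} by reducing to $d(w_1,w_2)=t$, using $d(w_2)=t|V(e')|\geq 2t$ to rule out that every edge through $w_2$ contains $w_1$, and then invoking Claim~\ref{claim:nbrhood-structure}\ref{claim6i}. No gaps; this matches the paper's argument up to relabelling of the vertices.
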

\begin{claimproof}
To prove~\ref{claim7i}, let $e,f \in \cH$ satisfy $|V(e) \cap V(f)| \geq 2$, and let $x, y \in V(e) \cap V(f)$ be two distinct vertices.  
If $V(e) \ne V(f)$, then without loss of generality, we may assume that there exists $w \in V(e) \setminus V(f)$. 
Since $x,y \in V(f)$, we have $E(w,x) \cap E(w,y) = \varnothing$ by Claim~\ref{claim:nbrhood-structure}, 
but this is a contradiction since $e \in E(w,x) \cap E(w,y)$. Thus, $V(e) = V(f)$, as desired.

To prove~\ref{claim7ii}, let $e \in \cH$, and let $x, y \in V(e)$ be two distinct vertices.  
To show $|\{f \in \cH : V(f)=V(e) \}| = t$, by~\ref{claim7i} it suffices to show $d(x, y) = t$.
Since $d(x) < e(\cH)$, there exists $f \in \cH$ with $x \notin V(f)$. 
By Claim~\ref{claim:nbrhood-structure}, we have $d(x) =  t \cdot |V(f)| \geq 2t$ (since $\cH$ has no edge of size one).  
If all edges of $\cH$ containing $x$ also contain $y$, then $t \geq d(x,y) = d(x) \geq 2t$, a contradiction, 
so there exists an edge $g \in \cH$ with $x \in V(g)$ and $y \notin V(g)$.  By Claim~\ref{claim:nbrhood-structure}\ref{claim6i}, $d(x, y) = t$, as desired.
\end{claimproof}

By Claim~\ref{claim:t-fold-of-linear}, we deduce that $\cH$ is a $t$-fold duplication of some $n$-vertex linear intersecting hypergraph $\cH_0$. 
By~\eqref{eqn:maxnv} and Theorem~\ref{thm:charlinear}, either $\cH_0$ is an $|N[v_{\max} ]|$-vertex projective plane of order $k$ (so there exists $k\in \mathbb{N}$ with $|N[v_{\max}]| = k^2 + k + 1$), or $\cH_0$ is an $|N[v_{\max}]|$-vertex near-pencil. This completes the proof.
\end{proof}

\providecommand{\bysame}{\leavevmode\hbox to3em{\hrulefill}\thinspace}
\providecommand{\MR}{\relax\ifhmode\unskip\space\fi MR }
\providecommand{\MRhref}[2]{%
  \href{http://www.ams.org/mathscinet-getitem?mr=#1}{#2}
}
\providecommand{\href}[2]{#2}

\vspace{1cm}

{\footnotesize \obeylines \parindent=0pt
\vspace{0.3cm}
Dong Yeap Kang, Extremal Combinatorics and Probability Group, Institute for Basic Science, Daejeon, South Korea

\vspace{.3cm}
Tom Kelly, School of Mathematics, Georgia Institute of Technology, Atlanta, GA, USA

\vspace{0.3cm}
Daniela K\"{u}hn, School of Mathematics, University of Birmingham, Edgbaston, Birmingham, B15 2TT, UK

\vspace{.3cm}
Abhishek Methuku, Department of Mathematics, University of Illinois Urbana-Champaign, Urbana, IL, USA

\vspace{.3cm}
Deryk Osthus, School of Mathematics, University of Birmingham, Edgbaston, Birmingham, B15 2TT, UK
\vspace{.3cm}}

{\footnotesize \parindent=0pt
\begin{flushleft}
{\it{E-mail addresses}:}
\tt{dykang.math@ibs.re.kr, tom.kelly@gatech.edu, d.kuhn@bham.ac.uk, abhishekmethuku@gmail.com, d.osthus@bham.ac.uk}
\end{flushleft}
}

\end{document}